\title{Adaptive Sketch-and-Project Methods for Solving Linear Systems}
\date{\vspace{-5ex}}
\author[1]{Robert M. Gower}
\author[2]{Denali Molitor}
\author[2]{Jacob Moorman}
\author[2]{Deanna Needell}
\affil[1]{T\'el\'ecom ParisTech, LTCI, Universit\'e Paris-Saclay, France}
\affil[2]{Department of Mathematics, University of California at Los Angeles, Los Angeles, California, USA}
\newcommand{\R}{\mathbb{R}} 
\newcommand{\N}{\mathbb{N}} 
\newcommand{\cD}{{\cal D}}
\newcommand{\cW}{{\cal W}}
\newcommand{\mA}{{\bf A}}
\newcommand{\mB}{{\bf B}}
\newcommand{\mC}{{\bf C}}
\newcommand{\mD}{{\bf D}}
\newcommand{\mG}{{\bf G}}
\newcommand{\mH}{{\bf H}}
\newcommand{\mI}{{\bf I}}
\newcommand{\mM}{{\bf M}}
\newcommand{\mP}{{\bf P}}
\newcommand{\mR}{{\bf R}}
\newcommand{\mS}{{\bf S}}
\newcommand{\mW}{{\bf W}}
\newcommand{\mZ}{{\bf Z}}
\newcommand{\thup}{^{\text{th}}}
\newcommand{\eqdef}{\overset{\text{def}}{=}} 
\newcommand{\ve}[2]{\langle #1 ,  #2 \rangle} 
\newcommand{\dotprod}[1]{\left< #1\right>} 
\newcommand{\norm}[1]{\left\lVert#1\right\rVert}      
\newcommand{\Prob}[1]{\mathbb{P}[#1]}
\DeclareMathOperator{\argmin}{argmin}        
\newcommand{\argminunder}[1]{\underset{#1}{\argmin}}
\DeclareMathOperator{\argmax}{argmax}        
\DeclareMathOperator{\diag}{diag}       
\newcommand{\kernel}[1]{{\rm Null}\left( #1\right)}
\newcommand{\range}[1]{{\rm Range}\left( #1\right)}
\newcommand{\expSB}[2]{{ \mathbb{E}}_{#1}\left[#2\right] } 
\newcommand{\E}[1]{\mathbb{E}\left[#1\right] } 
\newcommand{\EE}[2]{\mathbb{E}_{#1}\left[#2\right] } 
\newcommand{\VAR}[2]{\mathbb{VAR}_{#1}\left[#2\right] }
\theoremstyle{plain}
\newtheorem{theorem}{Theorem}  
\newtheorem{lemma}[theorem]{Lemma} 
\theoremstyle{definition}
\newtheorem{assumption}{Assumption} 
\newtheorem{definition}{Definition}
\begin{document}

\maketitle
                                                                  
\begin{abstract}
We present new adaptive sampling rules for the sketch-and-project method for solving linear systems. To deduce our new sampling rules, we first show how the progress of one step of the sketch-and-project method depends directly on a \emph{sketched residual}. Based on this insight, we derive a 1) max-distance sampling rule, by sampling the sketch with the largest sketched residual 2) a proportional sampling rule, by sampling proportional to the sketched residual, and finally 3) a capped sampling rule. The capped sampling rule is a generalization of the recently introduced adaptive sampling rules for the Kaczmarz method~\cite{BaiWuSISC2018}. We provide a global linear convergence theorem for each sampling rule and show that the max-distance rule enjoys the fastest convergence. This finding is also verified in extensive numerical experiments that lead us to conclude that the max-distance sampling rule is superior both experimentally and theoretically  to the capped sampling rule. We also provide numerical insights into implementing the adaptive strategies so that the per iteration cost is of the same order as using a fixed sampling strategy when the number of sketches times the sketch size is not significantly larger than the number of columns.
\end{abstract}

\begin{keywords}
  sketch-and-project, adaptive sampling, least squares, randomized Kaczmarz, coordinate descent
\end{keywords}

\begin{AMS}
  15A06, 15B52, 65F10, 68W20, 65N75, 65Y20, 68Q25, 68W40, 90C20
\end{AMS}

\renewcommand\arraystretch{2}

 \section{Introduction}

 We consider the fundamental problem of finding an approximate solution to the linear system
\begin{equation} \label{eqn:problin}
\mA x = b,
\end{equation}
where $\mA \in \R^{m \times n}$ and $b \in \R^m.$ Given the possibility of multiple solutions, we set out to find a least-norm solution given by
\begin{equation} \label{eqn:prob}
x^* \eqdef \min_{x \in \R^n} \tfrac{1}{2}\norm{x}_{\mB}^2 \quad \mbox{subject to}\quad \mA x= b,
\end{equation}
where $\mB \in \R^{n \times n}$ is a symmetric positive definite matrix and $\norm{x}_{\mB}^2 \eqdef \dotprod{\mB x,x}.$ Here, we consider consistent systems, for which there exists an $x$ that satisfies \Cref{eqn:problin}.

When the dimensions of $\mA$ are large, direct methods for solving~\Cref{eqn:prob} can be infeasible, and  iterative methods are favored. In particular, Krylov methods including the conjugate gradient algorithms~\cite{Hestenes1952} are the industrial standard so long as one can afford full matrix vector products and the system matrix fits in memory. On the other hand, if a single matrix vector product is considerably expensive, or $\mA$ is too large to fit in memory, then randomized methods such as the randomized Kaczmarz~\cite{Kaczmarz1937,Strohmer2009} and coordinate descent method~\cite{Ma2015,leventhal2010randomized} are effective. 

\subsection{Randomized Kacmarz}\label{subsec:RK_intro}
The randomized Kaczmarz method is typically used to solve linear systems of equations in the large data regime, i.e.\ when the number of samples $m$ is much larger than the dimension $n$. The Kaczmarz method was originally proposed in 1937 and has seen applications in computer tomography (CT scans), signal processing, and other areas \cite{Kaczmarz1937,Strohmer2009,GBH70:Algebraic-Reconstruction,Nat01:Mathematics-Computerized}. 
In each iteration $k$, the current iterate $x^k$ is projected onto the solution space of
a selected row of the linear system of \Cref{eqn:problin}. Specifically, at each iteration
\begin{equation*}
    x^{k+1} = \argminunder{x\in \R^n} {\norm{x-x^k}^2} 
    \quad \mbox{subject to} \quad
    \mA_{i_k:} x = b_{i_k},
\end{equation*}
where $\mA_{i_k:}$ is the row of $\mA$ selected at iteration $k$. 
The Kaczmarz update can be written explicitly as 
\begin{equation}\label{eqn:RK_update}
    x^{k+1} = x^k + \frac{b_{i_k} - \langle \mA_{i_k:},x^k\rangle}{\norm{\mA_{i_k:}}_2^2} \mA_{i_k:}^\top.
\end{equation} 

\subsection{Coordinate descent}\label{subsec:CD_intro} 
Coordinate descent is commonly used for optimizing general convex optimization functions when the dimensions are extremely large, since at each iteration only a single coordinate (or dimension) is updated~\cite{Richtarik2014a,Richtarik2013a}. 
Here, we consider coordinate descent applied to \Cref{eqn:prob}. In this setting, it is sometimes referred to as randomized Gauss-Seidel~\cite{Ma2015,leventhal2010randomized}. 

At iteration $k$ a dimension $i\in\{1,\ldots,n\}$ is selected and the coordinate $x^k_i$ of the current iterate $x^k$ is updated such that the least-squares objective $\norm{b-\mA x}^2$ is minimized. More formally,
\begin{equation*}
    x^{k+1} =  \argminunder{x\in \R^n, \, \lambda \in \R}\norm{b-\mA x}^2 \quad \mbox{subject to} \quad x = x^k + \lambda \, e^i,
\end{equation*}
where $e^i$ is the $i\thup$ coordinate vector.
Let $\mA_{:i}$ denote the $i\thup$ column of $\mA$. The explicit update for coordinate descent applied to \Cref{eqn:prob} is given by 
\begin{equation}\label{eqn:cd_update}
    x^{k+1} = x^k - \frac{\mA_{:i_k}^\top (\mA x^k-b)}{\norm{\mA_{:i_k}}}e^{i_k}.
\end{equation}

\subsection{Sketch-and-project methods}\label{subsec:intro_snp}
Sketch-and-project is a general archetypal algorithm that unifies a variety of randomized iterative methods including both randomized Kaczmarz and coordinate descent along with all of their block variants~\cite{Gower2015}. At each iteration, sketch-and-project methods project the current iterate onto a subsampled or sketched linear system with respect to some norm. Let $\mB \in \R^{n\times n}$ be a positive definite matrix. We will consider the projection with respect to the $\mB$--norm given by  $\norm{\cdot}_{\mB} = \ve{\cdot}{\mB \cdot}$.

 Let $\mS_i \in \R^{m \times \tau}$ for $i=1,\ldots, q$ be the set of \emph{sketching matrices} where $\tau \in \N$ is the \emph{sketch size}.
 In general, the set of sketching matrices $\mS_i$ could be infinite, however, here, we restrict ourselves to a finite set of $q \in \N$ sketching matrices. At the $k\thup$ iteration of the sketch-and-project algorithm,  a sketching matrix $\mS_{i}$ is selected and the current iterate $x^{k}$ is projected onto the solution space of the sketched system $\mS_{i_k}^\top \mA x = \mS_{i_k}^\top b$ with respect to the $\mB$--norm. Given a selected index $i_k \in \{1,\ldots, q\}$ the sketch-and-project update solves
\begin{equation}
\quad x^{k+1}  =  \argminunder{x\in \R^n} \norm{x- x^{k}}_\mB^2 \quad \mbox{subject to} \quad  \mS_{i_k}^\top \mA x = \mS_{i_k}^\top b. \quad \label{eqn:NF}
\end{equation}
The closed form solution to~\Cref{eqn:NF} is given by
\begin{equation} \label{eqn:xupdate}
x^{k+1} = x^k - \mB^{-1}\mA^\top \mH_{i_k}(\mA x^k-b),
\end{equation}
where
\begin{equation} \label{eq:Hi}
\mH_i \eqdef \mS_i (\mS_{i}^\top \mA \mB^{-1}\mA^\top \mS_{i})^{\dagger} \mS_{i}^\top, \quad \mbox{for }i =1,\ldots, q,
\end{equation}
and $\dagger$ denotes the pseudoinverse.

One can recover the randomized Kaczmarz method under the sketch-and-project framework by choosing the matrix $\mB$ as the identity matrix and sketches $\mS_i = e^i$. If instead $\mB = \mA^\top \mA$ and sketches $\mS_i = \mA e^i = \mA_{:i}$, where $\mA_{:i}$ is the $i\thup$ column of the matrix $\mA$, then the resulting method is coordinate descent.

\subsection{Sampling of indices}\label{subsec:intro_adasample}
An important component of the methods above is the selection of the index $i_k$ at iteration $k$. Methods often use independently and identically distributed (i.i.d.) indices, as this choice makes the method and analysis relatively simple \cite{Strohmer2009,nesterov2012efficiency}. In addition to choosing indices i.i.d.\ at each iteration, several adaptive sampling methods have also been proposed, which we discuss next. These sampling strategies use information about the current iterate in order to improve convergence guarantees over i.i.d.\ random sampling strategies at the cost of extra calculation per iteration. Under certain conditions, such strategies can be implemented with only a marginal additional cost per iteration.

\subsubsection{Sampling for the Kaczmarz method}
The original Kaczmarz method cycles through the rows of the matrix $\mA$ and makes projections onto the solution space with respect to each row~\cite{Kaczmarz1937}.  In 2009, Strohmer and Vershynin suggested selecting rows with probabilities that are proportional to the squared row norms (i.e.\ $p_i \propto \norm{\mA_{i:}}_2^2$) and provided the first proof of exponential convergence of the randomized Kaczmarz method \cite{Strohmer2009}. 

Several adaptive selection strategies have also been proposed in the Kaczmarz setting. The max-distance Kaczmarz or Motzkin's method selects the index $i_k$ at iteration $k$ that leads to the largest magnitude update~\cite{Nutini2016,motzkin1954relaxation}. In addition to the max-distance selection rule, Nutini et al also consider the greedy selection rule that chooses the row corresponding to the maximal residual component i.e.\ $i_k = \argmax_i |\mA_{i:} x^k - b_i|$ at each iteration, but show that the max-distance Kacmzarz method performs at least as well as this strategy \cite{Nutini2016}. More complicated adaptive methods have also been suggested for randomized Kaczmarz, such as the capped sampling strategies proposed in \cite{BaiWuSISC2018,BAIWu201821} or the Sampling Kaczmarz Motzkin's method of \cite{DeLoeraMotzkin16}. 

\subsubsection{Sampling for coordinate descent}
For coordinate descent, several works have investigated adaptive coordinate selection strategies~\cite{perekrestenko2017fasterCD,nutini2015coordinate,nesterov2012efficiency,khalil-abid-gower2018}. As coordinate descent is not restricted to solving linear systems, these works often consider more general convex loss functions. 
A common greedy selection strategy for coordinate descent applied to differentiable loss functions is to select the coordinate that corresponds to the maximal gradient component, which is known as the Gauss-Southwell rule \cite{tseng1990dual,luo1992convergence,nutini2015coordinate, nesterov2012efficiency} or adaptively according to a duality gap~\cite{Csiba2015}.

\subsubsection{Sampling for sketch-and-project}
The problem of determining the optimal fixed probabilities with which to select the index $i_k$ at each iteration $k$ was shown in Section 5.1 of~\cite{Gower2015} to be a convex semi-definite program, which is often a harder problem than solving the original linear system. The problem of determining the optimal adaptive probabilities is even harder as one must consider the effects of the current index selection on the future iterates. Here, instead, we present adaptive sampling rules that are not necessarily optimal, but can be efficiently implemented and are proven to converge faster than the fixed non-adaptive rules.

\section{Contributions}\label{subsec:contr}
Adaptive sampling strategies have not yet been analyzed for the general sketch-and-project framework.
We introduce three different adaptive sampling rules for the general sketch-and-project method: max-distance, the capped-adaptive sampling rule, and proportional sampling probabilities. 
We prove that each of these methods converge exponentially in mean squared error with convergence guarantees that are strictly faster than the guarantees for sampling indices uniformly. 

\subsection{Key quantity: Sketched loss}

As we will see in the general convergence analysis of the sketch-and-project method detailed in \Cref{sec:convergence}, the convergence at each iteration depends on the current iterate $x^k$ and a key quantity known as the sketched loss 
\begin{equation}\label{eqn:residualk}
 f_i(x^k) \eqdef  \norm{ \mA x^k-b}_{\mH_i}^2,
 \end{equation}
of the sketch $\mS_i$ (recall the definition of $\mH_i$ in \Cref{eq:Hi}). This sketched loss was introduced in~\cite{Richtarik2017stochastic} where the authors show that the sketch-and-project method can be seen as a stochastic gradient method (we expand on this in)~\Cref{sec:reform_SGD}.  We show that using adaptive selection rules based on the sketched losses results in new methods with a faster convergence guarantees. 

\subsection{Max-distance rule}
We introduce the max-distance sketch-and-project method, which is a generalization of both the max-distance Kaczmarz method (also known as Motzkin's method)~\cite{Nutini2016,motzkin1954relaxation,HadNeeMotz18}, greedy coordinate descent (Gauss-Southwell rule~\cite{nutini2015coordinate}), and all their possible block variants. Nutini et al.\ showed that the max-distance Kaczmarz method performs at least as well as uniform sampling and the non-uniform sampling method of~\cite{Strohmer2009}, in which rows are sampled with probabilities proportional to the squared row norms of $\mA$~\cite{Nutini2016}. We extend this result to the general sketch-and-project setting and also show that the max-distance rule leads to a convergence guarantee that is \emph{strictly} faster than that of any fixed probability distribution.

\subsection{The capped adaptive rule}
A new family of adaptive sampling methods were recently proposed for the Kaczmarz type methods ~\cite{BaiWuSISC2018,BAIWu201821}. We extend these methods to the sketch-and-project setting, which allows for their application in other settings such as for coordinate descent. While introduced under the names greedy randomized Kaczmarz and relaxed greedy randomized Kaczmarz, we refer to these methods as \emph{capped adaptive} methods because they 
select indices $i$ whose corresponding sketched losses $f_i(x^k)$ are larger than a capped threshold given by a convex combination of the largest and average sketched losses. It was proven in~\cite{BaiWuSISC2018} that the convergence guarantee when using the capped adaptive rule is strictly faster than the fixed non-uniform sampling rule given in~\cite{Strohmer2009}. In \Cref{subsec:BaiWuExt}, we generalize this capped adaptive sampling to sketch-and-project methods and prove that the resulting convergence guarantee of this adaptive rule is slower than that of the max-distance rule. Furthermore, in \Cref{sec:sample_spec_costs}, we show that the max-distance rule requires less computation at each iteration than the capped adaptive rule.

\subsection{The proportional adaptive rule}
We also present a new and much simpler randomized adaptive rule as compared to the capped adaptive rule discussed above, in which indices are sampled with probabilities that are directly \emph{proportional} to their corresponding sketched losses $f_i(x^k)$. We show that this rule gives a resulting convergence that is at least twice as fast as when sampling the sketches uniformly.

\subsection{Efficient implementations}\label{subsec:eff_imp}

Our adaptive methods come with the added cost of computing the sketched loss $f(x^k)$ of \Cref{eqn:residualk} at each iteration. Fortunately, the sketched loss can be computed efficiently with certain precomputations as discussed in \Cref{sec:imp_tricks}. We show how the sketched losses can be maintained efficiently via an auxiliary update, leading to reasonably efficient implementations of the adaptive sampling rules. 
We demonstrate improved performance of the adaptive methods over uniform sampling when solving linear systems with both real and synthetic matrices per iteration and in terms of the flops required. 

\subsection{Consequences and future work}
Our results on adaptive sampling have consequences on many other closely related problems. For instance, 
an analogous sampling strategy to our proportional adaptive rule has been proposed for coordinate descent in the primal-dual setting for optimizing regularized loss functions~\cite{perekrestenko2017fasterCD}. Also a  variant of adaptive and greedy coordinate descent has been shown to speed-up the solution of the matrix scaling problem \cite{khalil-abid-gower2018}. The matrix scaling problem is equivalent to an entropy-regularized version of the optimal transport problem which has numerous applications in machine learning and computer vision~\cite{khalil-abid-gower2018,Cuturi2013_4927}. 
Thus the adaptive methods proposed here may be extended to these other settings such as adaptive coordinate descent for more general smooth optimization~\cite{perekrestenko2017fasterCD}.
The adaptive methods and  the analysis  proposed in this paper may also provide insights toward adaptive sampling for other classes of optimization methods such as stochastic gradient, since the randomized Kaczmarz method can be reformulated as stochastic gradient descent applied to the least-squares problem~\cite{NeedellWard2015}.

\section{Notation}

We now introduce notation that will be used throughout.  Let $\Delta_q$ denote the simplex in $\R^q$, that is
\begin{equation*}
\Delta_q \eqdef \{ p \in \R^q \; : \; \sum_{i=1}^q p_i =1, \; p_i \geq 0, \; \mbox{for }i=1,\ldots, q \}.
\end{equation*}
For probabilities $p \in \Delta_q $ and values $x_i$ depending on an index $i = 1,\ldots, q$, we denote 
$\EE{i \sim p}{x_i} \eqdef \sum_{i=1}^q p_i x_i,$
where $i\sim p$ indicates that $i$ is sampled with probability $p_i$. 
At the $k\thup$ iteration of the sketch-and-project algorithm,  a sketching matrix $\mS_{i_k}$ is sampled with probability
\begin{equation}\label{eq:pik}
     \Prob{\mS_{i_k} = \mS_i\; | \; x^k} =  p_i^k, \quad \mbox{for }i=1,\ldots, q,
\end{equation}
where $p^k \in \Delta_q$ and we use $p^k \eqdef (p_1^k,\ldots, p_q^k)$ 
  to denote the vector containing these probabilities. We drop the superscript  $k$ when the probabilities do not depend on the iteration.

For any positive semi-definite matrix $\mG$ we write the norm induced by $\mG$ as 
$
\norm{\cdot}_{\mG}^2 \eqdef \ve{\cdot}{\mG \cdot},
$
while $\norm{\cdot}$ denotes the standard 2-norm ($\norm{\cdot}_2$). For any matrix $\mM$, $\norm{\mM}_F \eqdef \sqrt{\sum_{i,j} \mM_{ij}^2}$. We use 
\begin{equation*}
\lambda_{\min}^+(\mG) \eqdef  \min_{v\in \range{\mG}} \frac{\norm{v}_{\mG}^2}{\norm{v}_2^2},
\end{equation*}
to denote the smallest non-zero eigenvalue of $\mG.$

\subsection{Organization}
The remainder of the paper is organized as follows. \Cref{sec:reform_SGD,sec:geomMotiv} provide additional background on the sketch-and-project method and motivation for adaptive sampling in this setting. \Cref{sec:reform_SGD} explains how the sketch-and-project method can be reformulated as stochastic gradient descent. The sampling of the sketches can then be seen as importance sampling in the context of stochastic gradient descent. \Cref{sec:geomMotiv} provides geometric intuition for the sketch-and-project method and motivates why one would expect adaptive sampling strategies that depend on the sketched losses $f_i(x^k)$ to perform well. 

\Cref{sec:methods} introduces the various sketch selection strategies considered throughout the paper, while \Cref{sec:convergence} provides convergence guarantees for each of the resulting methods. In \Cref{sec:imp_tricks}, we discuss the computational costs of adaptive sketch-and-project for the sketch selection strategies of \Cref{sec:methods} and suggest efficient implementations of the methods. \Cref{sec:costs-and-convergence-summaries} discusses convergence and computational cost for the special subcases of randomized Kaczmarz and coordinate descent. Performance of adaptive sketch-and-project methods are demonstrated in \Cref{sec:experiments} for both synthetic and real matrices.

\section{Reformulation as importance sampling for stochastic gradient descent}\label{sec:reform_SGD}
The sketch-and-project method can be reformulated as a stochastic gradient method, as shown in~\cite{Richtarik2017stochastic}. We use this reformulation to motivate our adaptive sampling as a variant of importance sampling.

Let $p \in \Delta_q$. Consider the stochastic program
\begin{equation}
    \min_{x \in \R^d} F(x) \; \eqdef \; \EE{i \sim p}{f_i(x)} \;= \;\EE{i \sim p}{\norm{\mA x - b}_{\mH_i}}^2.
    \label{eqn:stoch_obj}
\end{equation} 
Objective functions $F(x)$ such as the one in~\Cref{eqn:stoch_obj} are common in machine learning, where $f_i(x)$ often represents the loss with respect to a single data point.

When  $\EE{i \sim p}{\mH_i}$ is invertible, solving~\Cref{eqn:stoch_obj} is equivalent to solving the linear system~\Cref{eqn:problin}. 
This invertibility condition on $\EE{i \sim p}{\mH_i}$ can be significantly relaxed by using the following technical exactness assumption on the probability $p$ and the set of sketches introduced in~\cite{Richtarik2017stochastic}. 
\begin{assumption}\label{ass:exact}
Let $p\in \Delta_q$, $\Sigma \eqdef \{S_1,\ldots, S_q\}$ be a set of sketching matrices and $\mH_i$ as defined in \Cref{eq:Hi}. 
We say that the exactness assumption holds for $(p,\Sigma )$ if
\begin{equation*}
    \kernel{\EE{i \sim p}{\mH_i}} \subset \kernel{\mA}.
\end{equation*}
\end{assumption}

This exactness assumption guarantees\footnote{This can be shown by applying~\Cref{lem:NullA}  in \Cref{sec:aux_lemmas} with with $\mG = \EE{i \sim p}{\mH_i}$ and $\mW = \mA$.} that
\begin{equation} \label{eq:98js84js8j4}
\kernel{\mA} = \kernel{\mA^\top \EE{i \sim p}{\mH_i} \mA}.
\end{equation}
This in turn guarantees that the expected sketched loss of the point $x$ is zero if and only if $\mA x = b$. 
Indeed, by taking the derivative of~\eqref{eqn:stoch_obj} and setting it to zero we have that
\begin{eqnarray*}
\nabla F(x) \;=\; \mA^\top \EE{i \sim p}{\mH_i}(\mA x-b) \;=\;
\mA^\top \EE{i \sim p}{\mH_i}\mA( x-x^*) \;=\; 0.
\end{eqnarray*}
Thus, every minimizer $x$ of Equation~\eqref{eqn:stoch_obj} is such that 
\begin{eqnarray}
 x-x^* &\in & 
 \kernel{\mA^\top \EE{i \sim p}{\mH_i}\mA}
\; \overset{\eqref{eq:98js84js8j4} }{=}\; \kernel{\mA},\label{eqn:kernelAEAA}
\end{eqnarray}
thus $\mA(x-x^*) = \mA x -b = 0$.
  As shown in~\cite{Gower2015c} and~\cite{Richtarik2017stochastic} this exactness assumption holds trivially for most practical sketching techniques.

When the number of $f_i$ functions is large, the SGD (stochastic gradient descent) method is typically the method of choice for solving \Cref{eqn:stoch_obj}. 
To view the sketch-and-project update in \Cref{eqn:xupdate} as a SGD method, we sample an index $i_k \sim p$ at each iteration and takes a step
\begin{equation}
x^{k+1} = x^k -  \nabla^{\mB} f_{i_k}(x^k),
    \label{eqn:sgd_update}
\end{equation}
where $ \nabla^{\mB} f_{i_k}(x^k)$ is the gradient taken with respect to the $\mB$--norm.
 For $f_i(x^k)$ of \Cref{eqn:residualk}, the exact expression of this stochastic gradient is given by
\begin{equation} \label{eq:aj98j8aj3}
\nabla^{\mB} f_{i_k}(x^k) =  \mB^{-1}\mA^\top \mH_{i_k}(\mA x^k-b).
\end{equation}
By plugging~\Cref{eq:aj98j8aj3} into~\Cref{eqn:sgd_update} we can see that the resulting update is equivalent to a the sketch-and-project update in \Cref{eqn:xupdate}.

Though the indices $i\in [1,\dots, q]$ are often sampled uniformly at random for SGD,  many alternative sampling distributions have been proposed in order to accelerate convergence, including adaptive sampling strategies  \cite{csiba2018importance, johnson2013accelerating,NeedellWard2015,zhao2015stochastic,katharopoulos2018not,loshchilov2015online,alain2015variance}. Such sampling strategies give more weight to sampling indices corresponding to a larger loss $f_{i}(x)$ or a larger gradient norm $ \norm{\nabla^{\mB} f_{i}(x)}^2.$
In the sketch-and-project setting, it is not hard to show\footnote{See Lemma 3.1 in~\cite{Richtarik2017stochastic}.} that these two sampling strategies result in similar methods since 
\[f_{i}(x) = \norm{\mA x-b}_{\mH_i}^2 = \tfrac{1}{2}\norm{\nabla^{\mB} f_{i}(x)}_{\mB}^2.\]

In general, updating the loss and gradient of every $f_{i}(x)$ at each iteration can be too expensive. Thus many methods resort to using global approximations of these values such as the Lipschitz constant of the gradient~\cite{NeedellWard2015} that lead to fixed data-dependent sample distributions. For the sketch-and-project setting, 
 we demonstrate in~\Cref{sec:imp_tricks} that the adaptive sample distributions can be calculated efficiently, with a per-iterate cost on the same order as is required for the sketch-and-project update.

\section{Geometric viewpoint and motivational analysis} 
\label{sec:geomMotiv}
\def\dotMarkRightAngle[size=#1](#2,#3,#4){%
 \draw ($(#3)!#1!(#2)$) -- 
       ($($(#3)!#1!(#2)$)!#1!90:(#2)$) --
       ($(#3)!#1!(#4)$);
 \path (#3) --node[circle,fill,inner sep=.5pt]{} ($($(#3)!#1!(#2)$)!#1!90:(#2)$);
}
\tdplotsetmaincoords{70}{200}
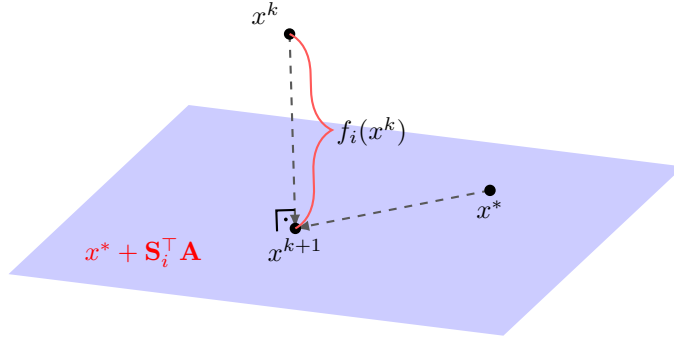
\begin{figure}
    \centering
    \begin{tikzpicture}[scale = 3.5,thick,tdplot_main_coords]
    \coordinate (A1) at (0,0,0);
    \coordinate (A2) at (2,0,0);
    \coordinate (A3) at (2,2,0);
    \coordinate (A4) at (0,2,0);
    
    \coordinate (S) at (0.6,0.5,0);
    \coordinate (E) at (0.5,3,1.5);
    \coordinate (P) at (1.15,1.15,0);
    \fill[fill=blue!20] (A1) -- (A2) -- (A3)  -- (A4) -- cycle;
    \draw[black!65,-latex, dashed] (E) -- (P);
    \draw[black!65,-latex, dashed] (S) -- (P);
    \draw[black,fill, thick]
    (E) circle (0.5pt) node[above left] {$x^k$};
    \draw[black,fill, thick]
    (P) circle (0.5pt) node[below] {$x^{k+1}$};
    \draw[black,fill, thick]
    (S) circle (0.5pt) node[below] {$x^{*}$};
    \node[red] at ($(A3)+(-0.4,-0.4,0)$) {$x^* + \null{\mS^\top_{i}\mA}$};
    \dotMarkRightAngle[size=2pt](E,P,A3);
     \draw [decorate,red!65, decoration={brace,amplitude=15pt},
     xshift=20pt,yshift=0pt]
    (E) -- (P) node [black,midway,xshift=30pt] 
    {$f_i(x^k)$};
     \end{tikzpicture}
     \caption{The geometric interpretation of~\Cref{eqn:NF}, as the projection of $x^k$ onto a random affine space that contains $x^*.$ The distance traveled is given by $f_{i}(x^k) = \norm{x^{k+1}-x^k}_{\mB}^2.$ }
    \label{fig:SK}
\end{figure}

The sketch-and\linebreak-project method given in \Cref{eqn:NF} can be seen as a method that calculates the next iterate $x^{k+1}$ by projecting the previous iterate $x^k$ onto a random affine space. 
Indeed, the constraint in~\Cref{eqn:NF} can be re-written as 
\begin{equation}\label{eq:affineSA}
     \{x \;: \; \mS_{i}^\top \mA x = \mS_{i}^\top b \} \quad = \quad x^* + \kernel{\mS_{i}^\top \mA}.
\end{equation}

In particular, \Cref{eqn:NF} is an orthogonal projection of the point $x^k$ onto an affine space that contains $x^*$ with respect to the $\mB$--norm. See~\Cref{fig:SK} for an illustration.
This projection is determined by the following projection operator.

\begin{lemma} \label{lem:Z}
Let
\begin{equation}\label{eqn:Zk}
\mZ_{i}  \eqdef 
 \mB^{-1/2}\mA^\top  \mS_i (\mS_{i}^\top \mA \mB^{-1}\mA^\top \mS_{i})^{\dagger} \mS_{i}^\top \mA \mB^{-1/2} =  \mB^{-1/2}\mA^\top \mH_{i} \mA \mB^{-1/2}, \quad  
\end{equation}
for $i = 1,\ldots, q,$ which is the orthogonal projection matrix onto $\range{\mB^{-1/2} \mA^\top \mS_i}.$ Consequently
\begin{equation}
\mZ_{i} \mZ_{i} = \mZ_{i}, \quad \mbox{and} \quad (\mI-\mZ_{i})\mZ_{i} =0.\label{eq:Ziproj}
\end{equation}
Furthermore we have that $(\mI-\mZ_i)$ gives the projection depicted in Figure~\ref{fig:SK} since
\begin{equation} \label{eqn:convstep1}
\mB^{1/2}(x^{k+1} - x^*)  \; = \; (\mI- \mZ_{i_k}) \mB^{1/2} (x^k - x^*).
\end{equation}
Finally we can re-write the sketched loss as  
\begin{equation}\label{eqn:fiBnorm}
f_{i}(x) \;=\; \|\mB^{1/2}(x-x^*)\|_{\mZ_i}^2, \quad \mbox{for }i=1,\ldots, q.
\end{equation}
\end{lemma}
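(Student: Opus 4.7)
The plan is to handle the four claims in the order stated, which gives a natural flow from a standard linear-algebra identification to a direct algebraic manipulation of the update rule.

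First I would identify $\mZ_i$ as an orthogonal projector. Setting $\mM \eqdef \mB^{-1/2}\mA^\top \mS_i$, the definition in \Cref{eqn:Zk} reads $\mZ_i = \mM (\mM^\top \mM)^\dagger \mM^\top$, since $\mM^\top \mM = \mS_i^\top \mA \mB^{-1}\mA^\top \mS_i$. This is the canonical formula for the orthogonal projector onto $\range{\mM} = \range{\mB^{-1/2}\mA^\top \mS_i}$, which immediately gives symmetry and idempotence, i.e.\ $\mZ_i \mZ_i = \mZ_i$, and hence $(\mI - \mZ_i)\mZ_i = 0$, proving \Cref{eq:Ziproj}. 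I would briefly justify the projector formula by recalling that for any real $\mM$, $\mM(\mM^\top\mM)^\dagger \mM^\top$ acts as the identity on $\range{\mM}$ and as zero on $\kernel{\mM^\top} = \range{\mM}^\perp$.

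Next I would derive \Cref{eqn:convstep1} by a direct algebraic manipulation of the closed form update \Cref{eqn:xupdate}. Since $x^*$ satisfies $\mA x^* = b$, subtracting $x^*$ from both sides yields
\begin{equation*}
x^{k+1} - x^* \;=\; (x^k - x^*) - \mB^{-1}\mA^\top \mH_{i_k}\mA (x^k - x^*).
\end{equation*}
Left-multiplying by $\mB^{1/2}$ and inserting $\mB^{-1/2}\mB^{1/2} = \mI$ between $\mA$ and $(x^k-x^*)$ gives
\begin{equation*}
\mB^{1/2}(x^{k+1}-x^*) \;=\; \bigl(\mI - \mB^{-1/2}\mA^\top \mH_{i_k}\mA \mB^{-1/2}\bigr)\mB^{1/2}(x^k-x^*),
\end{equation*}
and the parenthesized matrix is exactly $\mI - \mZ_{i_k}$ by the second equality in \Cref{eqn:Zk}.

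Finally, for \Cref{eqn:fiBnorm} I would use $b = \mA x^*$ to rewrite
\begin{equation*}
f_i(x) \;=\; (\mA x - b)^\top \mH_i (\mA x - b) \;=\; (x-x^*)^\top \mA^\top \mH_i \mA (x-x^*),
\end{equation*}
and then factor $\mB^{-1/2}\mB^{1/2} = \mI$ on either side of $\mA^\top \mH_i \mA$ to recognize $\mB^{-1/2}\mA^\top \mH_i \mA \mB^{-1/2} = \mZ_i$. Thus $f_i(x) = \|\mB^{1/2}(x-x^*)\|_{\mZ_i}^2$, as claimed. No step is really an obstacle; the only subtlety is justifying the orthogonal-projector identity $\mM(\mM^\top\mM)^\dagger\mM^\top$ when $\mM^\top\mM$ is singular, but this is standard and can be dispatched in a sentence by noting that the pseudoinverse acts as the genuine inverse on $\range{\mM^\top}$ and annihilates its orthogonal complement.
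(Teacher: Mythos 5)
Your proof is correct and follows essentially the same route as the paper: the identity \eqref{eqn:convstep1} and the rewriting \eqref{eqn:fiBnorm} are derived by exactly the same substitutions ($\mA x^* = b$, inserting $\mB^{-1/2}\mB^{1/2}$, and recognizing $\mZ_i$). The only difference is cosmetic: for \eqref{eq:Ziproj} the paper delegates the orthogonal-projector property to Lemma 2.2 of \cite{Gower2015}, whereas you spell out the standard argument that $\mM(\mM^\top\mM)^\dagger\mM^\top$ with $\mM = \mB^{-1/2}\mA^\top\mS_i$ projects onto $\range{\mM}$, which is the same underlying fact.
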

\begin{proof}
The proof of~\Cref{eq:Ziproj} relies on standard properties of the pseudoinverse and is given in Lemma 2.2 in~\cite{Gower2015}.

As for the proof of~\Cref{eqn:convstep1}, subtracting $x^*$ from both sides of \Cref{eqn:xupdate} we have that 
\begin{align} 
x^{k+1} - x^*  &\hspace*{1em} = \hspace*{1em}x^k - x^* - \mB^{-1}\mA^\top \mH_{i_k}(\mA x^k-b)\nonumber \\
& \hspace*{1em} \overset{\mathclap{\mA x^* = b}}{=} \hspace*{1em} x^k - x^* - \mB^{-1/2}\mB^{-1/2}\mA^\top \mH_{i_k} \mA \mB^{-1/2} \mB^{1/2}(x^k-x^*) \nonumber\\
&\hspace*{1em} \overset{\mathclap{\eqref{eqn:Zk}}}{=} \hspace*{1em} x^k - x^* - \mB^{-1/2}\mZ_{i_k} \mB^{1/2} (x^k - x^*). \label{eqn:convstep12}
\end{align}
It now only remains to multiply both sides by $\mB^{1/2}.$

Finally the proof of~\Cref{eqn:fiBnorm} follows by using $\mA x^* =b$ together with the definitions of $\mH_i$ and $\mZ_i$ given in~\Cref{eq:Hi} and~\Cref{eqn:Zk} so that
 \begin{equation}
    f_i(x) = \norm{\mA(x-x^*)}_{\mH_i}^2   = \norm{x-x^*}_{\mA^\top \mH_i\mA}^2 
\overset{\eqref{eqn:Zk}}{=}\norm{\mB^{1/2}(x-x^*)}_{\mZ_i}^2.
\end{equation}
\end{proof}
With the explicit expression for the projection operator we can calculate the progress made by a single iteration of the sketch-and-progress method. The convergence proofs later on in~\Cref{sec:convergence} will rely heavily on \Cref{lem:geometry,lem:motivation}.

\begin{lemma}\label{lem:geometry}
Let $x^k \in \R^d$ and  let $x^{k+1}$ be given by~\Cref{eqn:NF}. Then the squared magnitude of the update is 
\begin{equation}
   \norm{x^{k+1} -x^k}_{\mB}^2 = f_{i_k}(x^k), \label{eq:distravel} 
\end{equation}
and the error from one iteration to the next decreases according to 
\begin{equation}
    \norm{x^{k+1} -x^*}_{\mB}^2 = \norm{x^{k} -x^*}_{\mB}^2 - f_{i_k}(x^k). \label{eq:onestepprog}
\end{equation}
\end{lemma}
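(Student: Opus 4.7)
The plan is to derive both identities by leveraging Lemma \ref{lem:Z}, which provides a clean expression for the update in terms of the orthogonal projection $\mZ_{i_k}$ acting on $\mB^{1/2}(x^k - x^*)$. The intuition is simply the Pythagorean theorem for an orthogonal projection: the triangle with vertices $x^k$, $x^{k+1}$, $x^*$ has a right angle (in the $\mB$-norm) at $x^{k+1}$, as illustrated in Figure \ref{fig:SK}.

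First I would prove \Cref{eq:distravel}. Starting from $x^{k+1} - x^k = (x^{k+1} - x^*) - (x^k - x^*)$ and multiplying by $\mB^{1/2}$, I would apply \Cref{eqn:convstep1} to obtain
\begin{equation*}
\mB^{1/2}(x^{k+1} - x^k) = (\mI - \mZ_{i_k})\mB^{1/2}(x^k - x^*) - \mB^{1/2}(x^k - x^*) = -\mZ_{i_k}\mB^{1/2}(x^k - x^*).
\end{equation*}
Taking squared Euclidean norms, and using that $\mZ_{i_k}$ is an orthogonal projection so $\mZ_{i_k}^\top \mZ_{i_k} = \mZ_{i_k}^2 = \mZ_{i_k}$, gives
\begin{equation*}
\norm{x^{k+1} - x^k}_{\mB}^2 = \norm{\mZ_{i_k}\mB^{1/2}(x^k - x^*)}^2 = \norm{\mB^{1/2}(x^k-x^*)}_{\mZ_{i_k}}^2 = f_{i_k}(x^k),
\end{equation*}
where the last equality is \Cref{eqn:fiBnorm}.

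Next for \Cref{eq:onestepprog}, I would take the squared norm of \Cref{eqn:convstep1} and write, with $v \eqdef \mB^{1/2}(x^k - x^*)$,
\begin{equation*}
\norm{x^{k+1}-x^*}_{\mB}^2 = \norm{(\mI-\mZ_{i_k})v}^2 = \norm{v}^2 - 2\dotprod{v, \mZ_{i_k} v} + \norm{\mZ_{i_k} v}^2.
\end{equation*}
Using $\mZ_{i_k}^\top \mZ_{i_k} = \mZ_{i_k}$ once more, the last two terms collapse to $-\norm{\mZ_{i_k} v}^2 = -f_{i_k}(x^k)$, while $\norm{v}^2 = \norm{x^k - x^*}_{\mB}^2$, yielding the claim. (Equivalently, one could simply combine \Cref{eq:distravel} with the Pythagorean identity $\norm{x^k-x^*}_{\mB}^2 = \norm{x^k-x^{k+1}}_{\mB}^2 + \norm{x^{k+1}-x^*}_{\mB}^2$, which itself follows from $\mZ_{i_k}\bigl(\mI - \mZ_{i_k}\bigr) = 0$.)

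There is no real obstacle here; the work is entirely algebraic once \Cref{lem:Z} is in hand. The only point to be careful about is the symmetry and idempotence of $\mZ_{i_k}$, which are guaranteed by \Cref{eq:Ziproj} together with the fact that $\mZ_{i_k}$, defined in \Cref{eqn:Zk}, is manifestly symmetric. This ensures $\mZ_{i_k}^\top \mZ_{i_k} = \mZ_{i_k}$ so that squared norms collapse as above.
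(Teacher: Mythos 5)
Your proof is correct and follows essentially the same route as the paper: both arguments rest on \Cref{eqn:convstep1}, the projection identities \Cref{eq:Ziproj}, and the expression \Cref{eqn:fiBnorm} for the sketched loss, differing only cosmetically (you expand $\norm{(\mI-\mZ_{i_k})v}^2$ binomially where the paper writes $(\mI-\mZ_{i_k})(\mI-\mZ_{i_k})=\mI-\mZ_{i_k}$ directly, and you obtain $\mB^{1/2}(x^{k+1}-x^k)=-\mZ_{i_k}\mB^{1/2}(x^k-x^*)$ from \Cref{eqn:convstep1} rather than from \Cref{eqn:xupdate}). Your explicit remark that the symmetry of $\mZ_{i_k}$ is needed for $\mZ_{i_k}^\top\mZ_{i_k}=\mZ_{i_k}$ is a careful touch the paper leaves implicit.
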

\begin{proof}
We begin by deriving \Cref{eq:onestepprog}. 
Taking the squared norm in \Cref{eqn:convstep1} we have
\begin{align}
\norm{x^{k+1} - x^*}_{\mB}^2 
\hspace*{.5em} &=\hspace*{.5em}\norm{(\mI - \mB^{-1/2}\mZ_{i_k} \mB^{1/2})  (x^k - x^*)}_{\mB}^2 \nonumber \\
&=\hspace*{.5em}\norm{(\mI - \mZ_{i_k} )  \mB^{1/2}(x^k - x^*)}_2^2 \nonumber \\
& = \hspace*{.5em}\dotprod{\mB^{1/2}(x^k - x^*),(I-\mZ_{i_k})(I-\mZ_{i_k})\mB^{1/2}(x^k - x^*)} \nonumber \\
& \overset{\mathclap{\eqref{eq:Ziproj}}}{=} \hspace*{.5em} \dotprod{\mB^{1/2}(x^k - x^*),(I-\mZ_{i_k})\mB^{1/2}(x^k - x^*)} \nonumber \\
&=\hspace*{.5em} \norm{x^k-x^*}_{\mB}^2 - \dotprod{\mZ_{i_k} \mB^{1/2}(x^k - x^*), \mB^{1/2}(x^k - x^*)} \nonumber \\
&\overset{\mathclap{\eqref{eqn:fiBnorm}}}{=} \hspace*{.5em} \norm{x^k-x^*}_{\mB}^2 - f_i(x^k). \label{eqn:convstepmain}
\end{align}

Finally we establish~\Cref{eq:distravel} by subtracting $x^k$ from both sides of \Cref{eqn:xupdate} so that
\[ x^{k+1} - x^k  = - \mB^{-1/2}\mZ_{i_k} \mB^{1/2} (x^k - x^*).\]
It now remains to take the squared $\mB$--norm and use~\Cref{eqn:fiBnorm}.
\end{proof}

\Cref{eq:distravel} shows that the distance traveled from $x^k$ to $x^{k+1}$ is given by the sketch residual $f_{i_k}(x^k),$ as we have depicted in \Cref{fig:SK}. Furthermore, \Cref{eq:onestepprog} shows that the contraction of the error $x^{k+1}-x^*$ is given by $-f_{i_k}(x^k)$.
 Consequently 
\Cref{lem:geometry} indicates that in order to make the most progress in one step, or maximize the distance traveled,  we should choose $i_k$ corresponding to the largest sketched loss $f_{i_k}(x^k)$. We refer to this greedy sketch selection as 
the max-distance rule, which we explore in detail in ~\Cref{sec:maxdist}.


Next we give the expected decrease in the error.
\begin{lemma} \label{lem:motivation}
Let $p^k \in \Delta_q$.
Consider the iterates of the sketch-and-project method given in~\Cref{eqn:xupdate} where  $i_k \sim p_i^k$ as is done in~\Cref{alg:adaSKep}.
It follows that 
\begin{eqnarray*}
\EE{i \sim p^k}{\norm{x^{k+1}-x^*}_{\mB}^2 \; | \; x^k } &=& \norm{x^{k}-x^*}_{\mB}^2  - \EE{i \sim p^k}{f_i(x^k)}.
\end{eqnarray*} 
\end{lemma}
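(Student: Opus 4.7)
The plan is to obtain the claim by taking a conditional expectation of the deterministic one-step identity already established in Lemma \ref{lem:geometry}. Specifically, equation \eqref{eq:onestepprog} gives
\[
\norm{x^{k+1} -x^*}_{\mB}^2 = \norm{x^{k} -x^*}_{\mB}^2 - f_{i_k}(x^k),
\]
which holds pointwise for whichever index $i_k$ is drawn at step $k$. This identity is the key workhorse; once we have it, the lemma is essentially a one-line calculation.

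The execution is as follows. First I would condition on $x^k$ and take the expectation with respect to $i_k \sim p^k$ of both sides of \eqref{eq:onestepprog}. The left-hand side becomes $\EE{i\sim p^k}{\norm{x^{k+1}-x^*}_{\mB}^2 \mid x^k}$ by definition. On the right-hand side, the term $\norm{x^k - x^*}_{\mB}^2$ is measurable with respect to $x^k$ and hence pulls out of the conditional expectation unchanged, while the remaining term yields $-\EE{i\sim p^k}{f_i(x^k)}$ by linearity, using that $p^k$ is itself a function of $x^k$ (see \eqref{eq:pik}) so sampling $i_k \sim p^k$ conditional on $x^k$ is well defined and produces exactly the averaging implied by the notation $\EE{i\sim p^k}{\cdot}$. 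Combining these two observations yields the claimed equality.

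There is essentially no obstacle here: no new algebra is required beyond invoking Lemma \ref{lem:geometry}, and no additional assumption (such as the exactness assumption) is needed for this identity itself. The only bookkeeping point worth flagging in the write-up is that $p^k$ may depend on $x^k$, so one should make clear that the expectation on the right-hand side is the conditional expectation over $i_k$ given $x^k$; this matches the notation $\EE{i\sim p^k}{f_i(x^k)}$ already fixed in the paper.
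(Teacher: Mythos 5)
Your proposal is correct and follows exactly the paper's own proof: the paper likewise obtains the lemma by taking the expectation of \eqref{eq:onestepprog} conditioned on $x^k$. Your additional remarks about the measurability of $\norm{x^k-x^*}_{\mB}^2$ and the dependence of $p^k$ on $x^k$ are accurate bookkeeping that the paper leaves implicit.
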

\begin{proof}
The result follows by taking the expectation over~\Cref{eq:onestepprog} conditioned on $x^k$.
\end{proof}

\Cref{lem:motivation} suggests choosing adaptive probabilities so that $\EE{i \sim p^k}{f_i(x^k)}$ is large. This analysis motivates the adaptive methods described in \Cref{sec:adaptive}.

\section{Selection rules}\label{sec:methods}

Motivated by \Cref{lem:geometry,lem:motivation}, we might think that sampling rules that prioritize larger entries of the sketched loss should converge faster. 
From this point we take two alternatives, 1) choose the $i_k$ that maximizes the decrease (\Cref{sec:maxdist}) or 2) choose a probability distribution that prioritizes the biggest decrease (\Cref{sec:adaptive}). Below, we describe several sketch-and-project sampling strategies (fixed, adaptive, and greedy) and analyze their convergence in \Cref{sec:convergence}. 
The adaptive and greedy sampling strategies require knowledge of the current sketched loss vector at each iteration. Calculating the sketched loss from scratch is expensive, thus in~\Cref{sec:imp_tricks} we will show how to efficiently calculate the new sketched loss $f(x^{k+1})$ using the previous sketched loss $f(x^{k})$.

\subsection{Fixed sampling}\label{subsec:fixedProbMethod}
We first recall the standard non-adaptive sketch-and-project method that will be used as a comparison for the greedy and adaptive versions. In the non-adaptive setting the sketching matrices are sampled from a fixed distribution that is independent of the current iterate $x^k$. For reference, the details of the non-adaptive sketch-and-project method are provided in \Cref{alg:fixedProbs}.
 
\begin{algorithm}
\begin{algorithmic}[1]
\State \textbf{input:}  $x^0\in \R^{n},$ $\mA\in \R^{m\times n}$, $b\in\R^m$, $p\in \Delta_q$, and a set of sketching matrices $\mS = [\mS_1,\dots, \mS_q]$
\For {$k = 0, 1, 2, \dots$}
    \State $i_k \sim p_i$ 
	\State $x^{k+1} = x^k - \mB^{-1}\mA^\top \mH_{i_k}(\mA x^k-b)$
\EndFor
\State \textbf{output:} last iterate $x^{k+1}$
\end{algorithmic}
\caption{Non-Adaptive Sketch-and-Project}
\label{alg:fixedProbs}
\end{algorithm}

\subsection{Adaptive probabilities}\label{sec:adaptive}

\Cref{eq:onestepprog} motivates selecting indices that correspond to larger sketched losses with higher probability. We refer to such sampling strategies as adaptive sampling strategies, as they depend on the current iterate and its corresponding sketched loss values. In the adaptive setting, we sample indices at the $k\thup$ iteration with probabilities given by $p^k \in \Delta_q$. Adaptive sketch-and-project is detailed in \Cref{alg:adaSKep}.  

\begin{algorithm}
\begin{algorithmic}[1]
\State \textbf{input:}  $x^0\in \R^{n},$ $\mA\in \R^{m\times n}$, $b\in\R^m$, and a set of sketching matrices $\mS = [\mS_1,\dots, \mS_q]$
\For {$k = 0, 1, 2, \dots$}
	\State  $f_i(x^k) = \norm{\mA x^k-b}_{\mH_i}$ for $i=1,\ldots, q$ 
    \State  Calculate $p^k \in \Delta_q$  \Comment Typically based on $f(x^k)$
    \State  $i_k \sim p_i^k$ 
	\State  $x^{k+1} = x^k - \mB^{-1}\mA^\top \mH_{i_k}(\mA x^k-b)$ \label{ln:updaters}
\EndFor
\State \textbf{output:} last iterate $x^{k+1}$
\end{algorithmic}
\caption{Adaptive Sketch-and-Project}
\label{alg:adaSKep}
\end{algorithm}

\subsection{Max-distance rule}\label{sec:maxdist}
We refer to the greedy sketch selection rule given by
\begin{equation}\label{eqn:max_dist}
i_k = \argmax_{i=1,\ldots, q} f_i(x^k) =  \norm{\mA x^k-b}_{\mH_i}^2,
\end{equation}
as the max-distance selection rule. Per iteration, the max-distance rule leads to the best expected decrease in mean squared error. The max-distance sketch-and-project method is described in \Cref{alg:maxDist}.
 This greedy selection strategy has been studied for several specific choices of $\mB$ and sketching methods. For example, in the Kaczmarz setting, this strategy is typically referred to as max-distance Kaczmarz or Motzkin's method~\cite{griebel2012greedy,Nutini2016,motzkin1954relaxation}. For coordinate descent, this selection strategy is the Gauss-Southwell rule \cite{nesterov2012efficiency,nutini2015coordinate}. We provide a convergence analysis for the general sketch-and-project max-distance selection rule in \Cref{thm:maxDistConv}. We further show that max-distance selection leads to a convergence rate that is strictly larger than the resulting convergence rate when sampling from any fixed distribution in \Cref{thm:maxDistBetterThanUnif}.

\begin{algorithm}
\begin{algorithmic}[1]
\State \textbf{input:}  $x^0\in \R^{n},$ $\mA\in \R^{m\times n}$, $b\in\R^m$, and a set of sketching matrices $\mS = [\mS_1,\dots, \mS_q]$
\For {$k = 0, 1, 2, \dots$}
	\State $f_i(x^k) = \norm{\mA x^k-b}_{\mH_i}$ for $i=1,\ldots, q$
    \State $i_k = \arg \max_{i=1,\ldots, q} f_i(x^k)$  
	\State $x^{k+1} = x^k - \mB^{-1}\mA^\top \mH_{i_k}(\mA x^k-b)$
\EndFor
\State \textbf{output:} last iterate $x^{k+1}$
\end{algorithmic}
\caption{Max-Distance Sketch-and-Project}
\label{alg:maxDist}
\end{algorithm}

\section{Convergence}\label{sec:convergence}
We now present convergence results for the max-distance selection rule, uniform sampling, and adaptive sampling with probabilities proportional to the sketched loss. We summarize the rates of convergence discussed throughout \Cref{sec:convergence} in \Cref{tab:conv_summary}. Our first step in the analysis is to establish an invariance property of the iterates in the following lemma.\footnote{This lemma was first presented in~\cite{Gower2015c}. We present and prove it here for completeness.} In particular, \Cref{lem:invariant} guarantees the error vectors $x^k-x^*$ remain in the subspace $\range{\mB^{-1} \mA^\top}$ for all iterations if $x^0 \in \range{\mB^{-1} \mA^\top}$, which allows for a tighter convergence analysis. 
\begin{lemma}\label{lem:invariant} 
If $x^0 \in \range{\mB^{-1} \mA^\top}$ then $x^k -x^* \in \range{\mB^{-1}\mA^\top}.$
\end{lemma}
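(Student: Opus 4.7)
The plan is a straightforward induction on $k$, split into two parts: first showing the optimum $x^*$ itself lies in $\range{\mB^{-1}\mA^\top}$, and then showing the update preserves membership in this subspace.

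First I would handle $x^*$. Since $x^*$ is the minimum $\mB$-norm solution to the consistent system $\mA x = b$, writing the Lagrangian of~\eqref{eqn:prob} and applying the KKT stationarity condition yields $\mB x^* = \mA^\top \lambda^*$ for some multiplier $\lambda^* \in \R^m$, so $x^* = \mB^{-1}\mA^\top \lambda^* \in \range{\mB^{-1}\mA^\top}$. Combined with the hypothesis $x^0 \in \range{\mB^{-1}\mA^\top}$, this gives the base case $x^0 - x^* \in \range{\mB^{-1}\mA^\top}$.

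For the inductive step, assume $x^k - x^* \in \range{\mB^{-1}\mA^\top}$. The sketch-and-project update~\eqref{eqn:xupdate} can be rewritten as
\begin{equation*}
 x^{k+1} - x^* \;=\; (x^k - x^*) \;-\; \mB^{-1}\mA^\top \mH_{i_k}(\mA x^k - b).
\end{equation*}
The first summand is in $\range{\mB^{-1}\mA^\top}$ by the inductive hypothesis, and the second summand is manifestly of the form $\mB^{-1}\mA^\top v$ with $v = \mH_{i_k}(\mA x^k - b) \in \R^m$, hence also lies in $\range{\mB^{-1}\mA^\top}$. Since this subspace is closed under addition, $x^{k+1} - x^* \in \range{\mB^{-1}\mA^\top}$, completing the induction.

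There is essentially no obstacle here; the only subtle point is justifying $x^* \in \range{\mB^{-1}\mA^\top}$, which follows from the least-$\mB$-norm characterization in~\eqref{eqn:prob}. The update structure in~\eqref{eqn:xupdate} makes the inductive step immediate because every increment produced by the algorithm is, by construction, already of the form $\mB^{-1}\mA^\top(\cdot)$.
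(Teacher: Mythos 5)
Your proof is correct and follows essentially the same route as the paper: the Lagrangian/stationarity argument to place $x^*$ in $\range{\mB^{-1}\mA^\top}$, followed by induction using the fact that each update increment is of the form $\mB^{-1}\mA^\top(\cdot)$. No gaps.
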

\begin{proof}
First note that $x^* \in \range{\mB^{-1}\mA^{\top}}$. This follows by taking the Lagrangian of \Cref{eqn:prob} given by
\[L(x,\lambda) = \tfrac{1}{2}\norm{x}_{\mB}^2+ \dotprod{\lambda, \mA x-b}.\]
Taking the derivative with respect to $x$, setting to zero and isolating $x$ gives
\begin{equation}\label{eqn:xstar-range}
  x^* = -\mB^{-1}\mA^\top \lambda \in \range{\mB^{-1}\mA^{\top}}.
\end{equation}
Consequently $x^* - x^0 \in  \range{\mB^{-1}\mA^{\top}}.$ Assuming that  $x^k -x^* \in \range{\mB^{-1}\mA^{\top}}$ holds, by induction we have that
\begin{equation}
x^{k+1}-x^* \overset{\eqref{eqn:xupdate}}{=}  x^k -x^*- \underbrace{\mB^{-1}\mA^\top \mS_{i_k}(\mS_{i_k}^\top \mA \mB^{-1}\mA^\top \mS_{i_k})^{\dagger}\mS_{i_k}^\top(\mA x^k-b)}_{\in \range{\mB^{-1}\mA^{\top}}}.
\end{equation}
Thus $x^{k+1}-x^*$ is the difference of two elements in the subspace $ \range{\mB^{-1}\mA^{\top}}$ and thus $x^{k+1}-x^*\in \range{\mB^{-1}\mA^{\top}}.$ 
\end{proof}

We also make use of the following fact. For a positive definite random matrix $\mM \in \R^{n\times n}$ drawn from some probability distribution $\cD$ and for any vector $v \in \R^{n}$
\begin{equation}\label{eqn:linNorm}
    \EE{\cD}{\norm{v}_{\mM}^2} 
    = \EE{\cD}{\dotprod{v,\mM v}}
    = \dotprod{v, \EE{\cD}{\mM v}}
    = \norm{v}_{\EE{\cD}{\mM}}^2.
\end{equation}

\subsection{Important spectral constants}
We define two key spectral constants in the following definition that will be used to express our forthcoming rates of convergence.
\begin{definition}\label{defn:sigmas}
\begin{equation} \label{defn:sigmainf}
\sigma_{\infty}^2(\mB,\mS) \eqdef  \min_{v \in \range{\mB^{-1}\mA^\top}}\max_{i=1,\ldots, q} \frac{\norm{ \mB^{1/2}v}_{\mZ_i}^2}{\norm{v}_{\mB}^2}.
\end{equation}
Let $p \in \Delta_q$  and let
\begin{equation} \label{defn:sigmap}
\sigma_{p}^2(\mB,\mS) \eqdef  \min_{v \in \range{\mB^{-1}\mA^\top}} \frac{  \norm{ \mB^{1/2}v}_{\EE{i\sim {p}}{\mZ_i}}^2 }{\norm{v}_{\mB}^2}.
\end{equation}
\end{definition}

Next we show that $\sigma_{\infty}^2(\mB,\mS)$ and $\sigma_{p}^2(\mB,\mS)$ can be used to lower bound $\max_i f_i(x)$ and $\EE{i \sim p}{f_i(x)}$, respectively. This result will allow us to develop~\Cref{eq:onestepprog} and~\Cref{lem:motivation} into a recurrence later on.
  
\begin{lemma} \label{lem:sigmalower} Let $p \in \Delta_q$ and consider the iterates $x^k$ given by~\Cref{alg:adaSKep} when using any adaptive sampling rule.
The spectral constants~\Cref{defn:sigmainf} and~\Cref{defn:sigmap} are such that 
\begin{align}
\max_{i=1,\ldots, q}f_i(x^k)\quad & \geq  \quad
\sigma_{\infty}^2(\mB,\mS) \norm{x^k-x^*}_{\mB}^2, \label{eq:sigmainflower}\\
\EE{i \sim p}{f_i(x^k)} \quad & \geq  \quad\sigma_{p}^2(\mB,\mS)  \norm{x^k-x^*}_{\mB}^2.  \label{eq:sigmaplower}
\end{align}
\end{lemma}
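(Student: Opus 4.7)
My plan is to prove each inequality by invoking the expression $f_i(x) = \|\mB^{1/2}(x-x^*)\|_{\mZ_i}^2$ from \Cref{eqn:fiBnorm} and then plugging the vector $v = x^k - x^*$ into the respective variational definitions of $\sigma_\infty^2(\mB,\mS)$ and $\sigma_p^2(\mB,\mS)$. For this substitution to be admissible, I need $x^k - x^* \in \range{\mB^{-1}\mA^\top}$, which is exactly the content of \Cref{lem:invariant} (assuming, as is standard and implicit in the algorithm, that $x^0 \in \range{\mB^{-1}\mA^\top}$; for instance, $x^0 = 0$ works).

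For the first bound~\eqref{eq:sigmainflower}, I would argue as follows. By \Cref{lem:invariant}, $v := x^k-x^* \in \range{\mB^{-1}\mA^\top}$. If $v=0$ the inequality is trivial, so assume $v\ne 0$. By \Cref{defn:sigmainf},
\[
\max_{i=1,\ldots,q} \frac{\|\mB^{1/2}v\|_{\mZ_i}^2}{\|v\|_\mB^2} \;\geq\; \sigma_\infty^2(\mB,\mS).
\]
Multiplying through by $\|v\|_\mB^2$ and invoking \Cref{eqn:fiBnorm} to identify $\|\mB^{1/2}v\|_{\mZ_i}^2 = f_i(x^k)$ yields \eqref{eq:sigmainflower}.

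For the second bound~\eqref{eq:sigmaplower}, the key extra step is to commute the expectation past the quadratic form. Since each $\mZ_i$ is an orthogonal projection (hence PSD) by \Cref{lem:Z}, the identity \Cref{eqn:linNorm} applies with the distribution $\cD$ given by sampling $i\sim p$, yielding
\[
\EE{i\sim p}{f_i(x^k)} \;=\; \EE{i\sim p}{\|\mB^{1/2}(x^k-x^*)\|_{\mZ_i}^2} \;=\; \|\mB^{1/2}(x^k-x^*)\|_{\EE{i\sim p}{\mZ_i}}^2.
\]
Then plugging $v = x^k-x^*$ into \Cref{defn:sigmap} and again using \Cref{lem:invariant} to validate $v \in \range{\mB^{-1}\mA^\top}$ gives \eqref{eq:sigmaplower}.

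There is no serious obstacle here: the lemma is essentially a direct translation of the spectral-constant definitions into statements about the iterates, with \Cref{lem:invariant} ensuring that the minimum in \Cref{defn:sigmas} is being taken over a subspace that actually contains the error vector $x^k-x^*$. The only mildly delicate point is the need for \Cref{eqn:linNorm} in the second bound, which requires $\EE{i\sim p}{\mZ_i}$ to be (at least) symmetric PSD — both of which follow immediately from the fact that each $\mZ_i$ is an orthogonal projection and hence symmetric PSD.
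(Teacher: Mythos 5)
Your proposal is correct and follows essentially the same route as the paper's proof: both use \Cref{lem:invariant} to place $x^k-x^*$ in $\range{\mB^{-1}\mA^\top}$, rewrite $f_i$ via \Cref{eqn:fiBnorm}, bound by the minimum over that subspace to recover \Cref{defn:sigmainf} and \Cref{defn:sigmap}, and invoke \Cref{eqn:linNorm} for the expectation case. Your explicit handling of the $v=0$ case and the remark that $x^0\in\range{\mB^{-1}\mA^\top}$ is needed are minor refinements the paper leaves implicit.
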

\begin{proof}
From the invariance provided by~\Cref{lem:invariant} we have that $x^k-x^* \in \range{\mB^{-1}\mA^\top}$ and consequently
\begin{align}\label{eq:tempsigminfsd}
 \frac{\max_{i=1,\ldots, q}f_i(x^k)}{\norm{x^k-x^*}_{\mB}^2 } \hspace*{.5em} & \overset{\mathclap{\eqref{eqn:fiBnorm}}}{=}\hspace*{.5em}   \max_{i=1,\ldots, q}\frac{\norm{\mB^{1/2}(x^k-x^*)}_{\mZ_i}^2}{\norm{x^k-x^*}_{\mB}^2} \nonumber\\
&\geq  \min_{v\in \range{\mB^{-1}\mA^\top}}\max_{i=1,\ldots, q}\frac{\norm{\mB^{1/2} v}_{\mZ_i}}{\norm{v}_{\mB}^2}
\;\overset{\eqref{defn:sigmainf}}{=} \;\sigma_{\infty}^2(\mB,\mS), \quad \forall k.
\end{align}

Analogously we have that
\begin{align}
\frac{\EE{i \sim p}{f_i(x^k)}}{\norm{x^k-x^*}_{\mB}^2 } \hspace*{.5em} & \overset{\mathclap{\eqref{eqn:fiBnorm}}}{=} \hspace*{.5em} \frac{\EE{i\sim p}{\norm{\mB^{1/2}(x^k-x^*)}_{\mZ_i}^2}}{\norm{x^k-x^*}_{\mB}^2} \nonumber \\
\hspace*{.5em} & \geq \hspace*{.5em}\min_{v\in \range{\mB^{-1}\mA^\top}}\frac{\EE{i\sim p}{\norm{\mB^{1/2} v}_{\mZ_i}^2}}{\norm{v}_{\mB}^2} \; \overset{\eqref{defn:sigmap}+\eqref{eqn:linNorm}}{=} \; \sigma_{p}^2(\mB,\mS).\label{eq:tempsigmapsd}
\end{align}
Thus~\Cref{eq:sigmainflower} and~\Cref{eq:sigmaplower} follow by re-arranging~\Cref{eq:tempsigminfsd} and~\Cref{eq:tempsigmapsd} respectively.
\end{proof}

Finally, we show that $\sigma_p^2(\mB,\mS) $ and $\sigma_{\infty}^2(\mB,\mS)$ are always less than one, and if the exactness~\Cref{ass:exact} holds then they are both strictly greater than zero. 

\begin{lemma}\label{lem:projEig}
Let $p \in \Delta_q$ and the set of sketching matrices $\{\mS_1,\ldots, \mS_q\}$ be such that 
that exactness~\Cref{ass:exact} holds. 
We then have the following relations: 
\begin{equation*}
0    < \sigma_p^2(\mB,\mS) \quad =\quad  \lambda_{\min}^+ \left( \expSB{i\sim p}{\mZ_{i}}\right) \quad \leq \quad \sigma_{\infty}^2(\mB,\mS) \leq 1.
\end{equation*}
\end{lemma}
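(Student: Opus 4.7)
The plan is to establish the four claims from right to left, tackling the elementary ones first.

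For $\sigma_{\infty}^2(\mB,\mS) \leq 1$, I would use that $\mZ_i$ is an orthogonal projection (from Lemma \ref{lem:Z}), so
\[
\|\mB^{1/2}v\|_{\mZ_i}^2 = \langle \mZ_i \mB^{1/2}v,\mB^{1/2}v\rangle = \|\mZ_i \mB^{1/2}v\|_2^2 \leq \|\mB^{1/2}v\|_2^2 = \|v\|_{\mB}^2,
\]
so every ratio appearing in~\eqref{defn:sigmainf} is at most $1$. For $\sigma_p^2(\mB,\mS) \leq \sigma_{\infty}^2(\mB,\mS)$, I would invoke the pointwise bound $\EE{i\sim p}{\|\mB^{1/2}v\|_{\mZ_i}^2} \leq \max_i \|\mB^{1/2}v\|_{\mZ_i}^2$ together with the identity~\eqref{eqn:linNorm}; dividing by $\|v\|_{\mB}^2$ and minimizing over $v \in \range{\mB^{-1}\mA^\top}$ yields the inequality.

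The substantive step is the identity $\sigma_p^2(\mB,\mS) = \lambda_{\min}^+(\EE{i\sim p}{\mZ_i})$. I would first change variables via $w = \mB^{1/2} v$, noting that $v \in \range{\mB^{-1}\mA^\top}$ if and only if $w \in \range{\mB^{-1/2}\mA^\top}$ and $\|v\|_{\mB} = \|w\|_2$. Combined with~\eqref{eqn:linNorm}, this rewrites~\eqref{defn:sigmap} as
\[
\sigma_p^2(\mB,\mS) = \min_{w \in \range{\mB^{-1/2}\mA^\top}} \frac{\|w\|_{\EE{i\sim p}{\mZ_i}}^2}{\|w\|_2^2}.
\]
To match this with the definition of $\lambda_{\min}^+$ given in the notation section, I need $\range{\EE{i\sim p}{\mZ_i}} = \range{\mB^{-1/2}\mA^\top}$. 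This is the main obstacle and is where exactness enters. Since $\EE{i\sim p}{\mZ_i}$ is symmetric, it suffices to show $\kernel{\EE{i\sim p}{\mZ_i}} = \kernel{\mA\mB^{-1/2}}$. The containment $\supseteq$ is immediate from the factorization $\mZ_i = \mB^{-1/2}\mA^\top \mH_i \mA\mB^{-1/2}$ in~\eqref{eqn:Zk}. For $\subseteq$, if $\EE{i\sim p}{\mZ_i} v = 0$, then left-multiplying by $\mB^{1/2}$ gives $\mA^\top \EE{i\sim p}{\mH_i} \mA \cdot \mB^{-1/2} v = 0$, and by~\eqref{eq:98js84js8j4} (which is the consequence of exactness already derived in the excerpt), this forces $\mA \mB^{-1/2} v = 0$.

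Finally, positivity $\sigma_p^2(\mB,\mS) > 0$ follows because $\lambda_{\min}^+$ is the \emph{smallest nonzero} eigenvalue of a symmetric PSD matrix, and $\EE{i\sim p}{\mZ_i}$ is not the zero matrix (assuming $\mA \neq 0$, its range equals $\range{\mB^{-1/2}\mA^\top} \neq \{0\}$), so the minimum nonzero eigenvalue is strictly positive. Chaining the four inequalities yields the stated result.
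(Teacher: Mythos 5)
Your proposal is correct and follows essentially the same route as the paper's proof: the bound $\sigma_\infty^2(\mB,\mS)\leq 1$ via the projection property of $\mZ_i$, the comparison $\sigma_p^2\leq\sigma_\infty^2$ via expectation versus maximum, and the identification $\sigma_p^2(\mB,\mS)=\lambda_{\min}^+\left(\expSB{i\sim p}{\mZ_{i}}\right)$ through the change of variables $w=\mB^{1/2}v$ together with the range identity $\range{\EE{i\sim p}{\mZ_i}}=\range{\mB^{-1/2}\mA^\top}$. The only cosmetic difference is that you derive the kernel identity from the already-stated consequence~\eqref{eq:98js84js8j4} of exactness, whereas the paper invokes \Cref{lem:NullA} directly; these are equivalent since~\eqref{eq:98js84js8j4} is itself obtained from that lemma.
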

\begin{proof}
Using the definition of $\mZ_i$ given in \Cref{eqn:Zk} and the fact that $\mB$ is positive definite, we have 
\begin{eqnarray*}  \label{eq:KerneleqZ}
\kernel{\EE{i \sim p}{\mZ_i}} &\overset{\eqref{eqn:Zk}}{=}& \kernel{\mB^{-1/2}\mA^\top \EE{i \sim p}{\mH_i} \mA \mB^{-1/2}}  \\
& =& \kernel{\mA^\top \EE{i \sim p}{\mH_i} \mA \mB^{-1/2}} \overset{\Cref{lem:NullA}}{=}  \kernel{\mA \mB^{-1/2}},
\end{eqnarray*}
where we applied~\Cref{lem:NullA} in the appendix with $\mG = \EE{i \sim p}{\mH_i}$ and $\mW = \mA.$ Taking the orthogonal complement of the above we have that
\begin{equation}\label{eq:rangeZi}
 \range{\EE{i \sim p}{\mZ_i}} = \range{\mB^{-1/2}\mA^\top}.
\end{equation}
Using the above we then have
\begin{eqnarray*}
\sigma_p^2(\mB,\mS)
&\overset{\eqref{defn:sigmap}}{=}& \min_{v \in \range{\mB^{-1}\mA^\top}} \frac{\norm{  \mB^{1/2} v}_{\expSB{i\sim p}{\mZ_{i}}}^2}{\norm{v}_{\mB}^2} \nonumber\\
 &\overset{\eqref{eq:rangeZi}}{=}&  \min_{\mB^{1/2} v \in \range{\EE{i \sim p}{\mZ_i}}} \frac{\norm{  \mB^{1/2} v}_{\expSB{i\sim p}{\mZ_{i}}}^2}{\norm{v}_{\mB}^2}
\; = \; \lambda_{\min}^{+}\left( \expSB{i\sim p}{\mZ_{i}}\right) >0.\nonumber
 \end{eqnarray*}

 Furthermore, 
\begin{eqnarray*}
\sigma_{p}^2(\mB,\mS) &\overset{\eqref{defn:sigmap}}{=}& \min_{v \in \range{\mB^{-1}\mA^\top}} \frac{\norm{  \mB^{1/2} v}_{\expSB{i\sim p}{\mZ_{i}}}^2}{\norm{v}_{\mB}^2} \nonumber\\
 &\overset{\eqref{eqn:linNorm}}{=}& \min_{v \in \range{\mB^{-1}\mA^\top}} \frac{\expSB{i\sim p}{\norm{  \mB^{1/2} v}_{\mZ_{i}}^2}}{\norm{v}_{\mB}^2} \nonumber\\
&\le&
\min_{v \in \range{\mB^{-1}\mA^\top}}\max_{i=1,\ldots, q} \frac{\norm{  \mB^{1/2} v}_{\mZ_{i}}^2}{\norm{v}_{\mB}^2} 
\;= \;\sigma_{\infty}^2(\mB,\mS) .\nonumber
\end{eqnarray*}
Finally, using the fact that the matrix $\mZ_i$ is an orthogonal projection (\Cref{lem:Z}), we have that 
 \begin{equation*}
\sigma_{\infty}^2(\mB,\mS) \; = \; \max_{i=1,\ldots, q} \frac{\norm{  \mB^{1/2} v}_{\mZ_{i}}^2}{\norm{v}_{\mB}^2} \; \overset{\eqref{eq:Ziproj}}{=} \;
\max_{i=1,\ldots, q} \frac{\norm{ \mZ_i \mB^{1/2} v}^2}{\norm{\mB^{1/2}v}^2} 
\; \le\; \max_{i=1,\ldots, q} \frac{\norm{  \mB^{1/2} v}^2}{\norm{\mB^{1/2}v}^2}  = 1.
 \end{equation*}
\end{proof}

    \subsection{Sampling from a fixed distribution}\label{subsec:fixedSampConv}
We first present a convergence result for the sketch-and-project method when the sketches are drawn from a fixed sampling distribution. This result will later be used as a baseline for comparison against the adaptive sampling strategies. 
\begin{theorem}\label{thm:fixed}
Consider \Cref{alg:fixedProbs} for some set of probabilities $p\in\Delta_q$. 
It follows that
\begin{equation*}
\E{\norm{x^{k}-x^*}_{\mB}^2 }  \leq \left(1-\sigma_p^2(\mB,\mS)\right)^k\norm{x^{0}-x^*}_{\mB}^2.
\end{equation*}
\end{theorem}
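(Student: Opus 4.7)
The plan is to directly combine the one-step descent identity of \Cref{lem:motivation} with the spectral lower bound of \Cref{lem:sigmalower} and then unroll the recursion.

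First, I would invoke \Cref{lem:invariant} to guarantee that the error vectors remain in the invariant subspace, i.e.\ $x^k - x^* \in \range{\mB^{-1}\mA^\top}$ for all $k \geq 0$ (implicitly assuming $x^0 \in \range{\mB^{-1}\mA^\top}$, e.g.\ $x^0 = 0$). This is the hypothesis that makes the definition of $\sigma_p^2(\mB,\mS)$ in \Cref{defn:sigmap} directly applicable to $v = x^k - x^*$, which is needed because the minimization in \Cref{defn:sigmap} is only over vectors in that subspace.

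Next, I would apply \Cref{lem:motivation} to the iterates of \Cref{alg:fixedProbs} (which is simply the special case of \Cref{alg:adaSKep} with the iteration-independent probabilities $p^k \equiv p$) to write
\begin{equation*}
\EE{i \sim p}{\norm{x^{k+1}-x^*}_{\mB}^2 \mid x^k} = \norm{x^k - x^*}_{\mB}^2 - \EE{i \sim p}{f_i(x^k)}.
\end{equation*}
Then I would use inequality \eqref{eq:sigmaplower} from \Cref{lem:sigmalower} to lower bound the expected sketched loss by $\sigma_p^2(\mB,\mS)\,\norm{x^k - x^*}_{\mB}^2$. Substituting this into the identity above yields the one-step contraction
\begin{equation*}
\EE{i \sim p}{\norm{x^{k+1}-x^*}_{\mB}^2 \mid x^k} \leq \bigl(1 - \sigma_p^2(\mB,\mS)\bigr)\,\norm{x^k - x^*}_{\mB}^2.
\end{equation*}

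Finally, I would take the total expectation on both sides (using the tower property of conditional expectation) to obtain $\E{\norm{x^{k+1}-x^*}_{\mB}^2} \leq (1-\sigma_p^2(\mB,\mS))\,\E{\norm{x^{k}-x^*}_{\mB}^2}$, and then iterate this recursion from $k$ down to $0$ to conclude. The quantity $1 - \sigma_p^2(\mB,\mS) \in [0,1)$ is a valid contraction factor by \Cref{lem:projEig} (assuming exactness), so the resulting geometric decay is meaningful. There is no real obstacle in this proof; it is essentially a clean assembly of the tools already developed, with the only subtle point being the invariance required to apply the spectral bound.
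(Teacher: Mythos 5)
Your proposal is correct and follows essentially the same route as the paper's proof: combine \Cref{lem:motivation} with inequality \eqref{eq:sigmaplower} of \Cref{lem:sigmalower} to get the one-step contraction, then take total expectation and unroll. The only difference is that you explicitly surface the invariance from \Cref{lem:invariant}, which the paper handles inside the proof of \Cref{lem:sigmalower}.
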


\begin{proof} 
Combining \Cref{lem:motivation} and~\Cref{eq:sigmaplower} of \Cref{lem:sigmalower}  we have that
\begin{eqnarray*} 
\EE{i_k \sim p}{\norm{x^{k+1}-x^*}_{\mB}^2 \, | \, x^k }
& \overset{\Cref{lem:motivation}}{=} &  \norm{x^{k}-x^*}_{\mB}^2-  \EE{i_k \sim p}{f_i(x^k)} \nonumber \\
& \overset{ \eqref{eq:sigmaplower}}{\leq} & \left(1-\sigma_p^2(\mB,\mS)\right)  \norm{x^{k}-x^*}_{\mB}^2. \nonumber
\end{eqnarray*}
Taking the full expectation and unrolling the recurrence, we arrive at \Cref{thm:fixed}.
\end{proof}

There are several natural and previously studied choices for fixed sampling distributions, for example, sampling the indices uniformly at random. Another choice is to pick $p\in \Delta_q$ in order to maximize $\sigma_p^2(\mB,\mS)$, but this results in a convex semi-definite program (see Section 5.1 in~\cite{Gower2015} ). The authors of~\cite{Gower2015} suggest convenient probabilities such that $p_i \sim \norm{\mA^\top \mS_i}_{\mB^{-1}}^2$ for which $\sigma_p^2(\mB,\mS)$ reduces to the scaled condition number.

\subsection{Max-distance selection}\label{subsec:max_dist_conv}
The following theorem provides a convergence guarantee for the max-distance selection rule of \Cref{sec:maxdist}.  To our knowledge, this is the first analysis of the max-distance rule for general sketch-and-project methods.

\begin{theorem}\label{thm:maxDistConv}
The iterates of max-distance sketch-and-project method in~\Cref{alg:maxDist} satisfy
\begin{equation*}
\norm{x^{k}-x^*}_{\mB}^2  \leq (1-\sigma_{\infty}^2(\mB,\mS))^k\norm{x^{0}-x^*}_{\mB}^2, 
\end{equation*}
where $\sigma_{\infty}(\mB,\mS)$  is defined as in  \Cref{defn:sigmainf} of \Cref{defn:sigmas}.

\end{theorem}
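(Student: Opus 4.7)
The plan is to combine the deterministic one-step progress identity with the max-over-sketches lower bound from the spectral constant $\sigma_\infty^2(\mB,\mS)$. Unlike \Cref{thm:fixed}, no expectation is needed because the max-distance rule is deterministic once $x^k$ is fixed, so the recurrence holds pointwise rather than in expectation.

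First I would invoke \Cref{eq:onestepprog} of \Cref{lem:geometry} to write the exact one-step decrease
\[
\norm{x^{k+1}-x^*}_\mB^2 \;=\; \norm{x^k-x^*}_\mB^2 \;-\; f_{i_k}(x^k).
\]
Then I would use the defining property of the max-distance rule, namely $i_k = \argmax_i f_i(x^k)$ in \Cref{alg:maxDist}, so that $f_{i_k}(x^k) = \max_{i=1,\ldots,q} f_i(x^k)$. At this point the proof reduces to lower-bounding $\max_i f_i(x^k)$ in terms of $\norm{x^k-x^*}_\mB^2$, which is exactly the content of \Cref{eq:sigmainflower} in \Cref{lem:sigmalower}.

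Before applying \Cref{lem:sigmalower}, I should justify that its hypothesis holds, i.e.\ that $x^k-x^* \in \range{\mB^{-1}\mA^\top}$. This follows from \Cref{lem:invariant}, provided the (standard and implicit) initialization condition $x^0 \in \range{\mB^{-1}\mA^\top}$ is assumed; the update on line 5 of \Cref{alg:maxDist} has the same structure as \Cref{eqn:xupdate}, so the invariance argument goes through verbatim (the particular deterministic choice of $i_k$ plays no role in that proof). Substituting \Cref{eq:sigmainflower} into the display above then yields the one-step contraction
\[
\norm{x^{k+1}-x^*}_\mB^2 \;\le\; \bigl(1-\sigma_\infty^2(\mB,\mS)\bigr)\,\norm{x^k-x^*}_\mB^2.
\]

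Finally I would unroll this deterministic recurrence from $k$ down to $0$ to obtain the stated bound $\norm{x^k-x^*}_\mB^2 \le (1-\sigma_\infty^2(\mB,\mS))^k \norm{x^0-x^*}_\mB^2$. There is no real obstacle: all the work is already absorbed into \Cref{lem:geometry,lem:sigmalower,lem:invariant}. The only subtlety worth remarking on is that the contraction factor $1-\sigma_\infty^2(\mB,\mS)$ is guaranteed to lie in $[0,1)$ under the exactness assumption by \Cref{lem:projEig}, which ensures the rate is genuinely a contraction and that the theorem is nonvacuous; one could mention this in passing as a sanity check after unrolling.
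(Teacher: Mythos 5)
Your proposal is correct and follows the paper's proof essentially verbatim: combine the deterministic one-step identity \Cref{eq:onestepprog} with the lower bound \Cref{eq:sigmainflower} of \Cref{lem:sigmalower}, then unroll the pointwise recurrence. The extra care you take in noting the implicit initialization $x^0 \in \range{\mB^{-1}\mA^\top}$ (needed for \Cref{lem:invariant}, and hence for \Cref{lem:sigmalower}) and the nonvacuity check via \Cref{lem:projEig} are sensible remarks but do not change the argument.
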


\begin{proof}

Combining~\Cref{eq:onestepprog} and~\Cref{eq:sigmainflower} we have that
\begin{eqnarray*} 
\norm{x^{k+1}-x^*}_{\mB}^2 
& \overset{\eqref{eq:onestepprog}}{=} &  \norm{x^{k}-x^*}_{\mB}^2-  \max_{i=1,\ldots, q}f_i(x^k) \nonumber \\
& \overset{ \eqref{eq:sigmainflower}}{\leq} & \left(1-\sigma_{\infty}^2(\mB,\mS)\right)  \norm{x^{k}-x^*}_{\mB}^2. \nonumber
\end{eqnarray*}
Unrolling the recurrence gives~\Cref{thm:maxDistConv}.
\end{proof}

One obvious disadvantage of sampling from a fixed distribution is that it is possible to sample the same index twice in a row. Since the current iterate already lies in the solution space with respect to the previous sketch, no progress is made in such an update. For adaptive distributions that only assign non-zero probabilities to non-zero sketched loss values, the same index will never be chosen twice in a row since the sketched loss corresponding to the previous iterate will always be zero (\Cref{lem:rkizero}). This fact allows us to derive convergence rates for adaptive sampling strategies that are strictly better than those for fixed sampling strategies.
\begin{lemma} \label{lem:rkizero}
Consider the sketched losses $f(x^{k})$ generated by iterating the sketch-and-project update given in~\Cref{eqn:xupdate}. We have that
\begin{equation*}
 f_{i_k}(x^{k+1}) =0, \quad \forall \, k\geq 0.
\end{equation*}
\end{lemma}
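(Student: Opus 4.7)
The plan is very short because the statement essentially expresses the fact that $x^{k+1}$ already lies in the affine constraint set defining the sketch-and-project step. Concretely, recall that the closed-form update \eqref{eqn:xupdate} is derived as the solution to the constrained problem \eqref{eqn:NF}, whose feasibility condition is $\mS_{i_k}^\top \mA x^{k+1} = \mS_{i_k}^\top b$, or equivalently
\begin{equation*}
\mS_{i_k}^\top (\mA x^{k+1} - b) = 0.
\end{equation*}
So the first step is to record this identity either by appealing to the equivalence of \eqref{eqn:xupdate} and \eqref{eqn:NF}, or by substituting \eqref{eqn:xupdate} directly and using $\mM \mM^\dagger \mS_{i_k}^\top(\mA x^k - b) = \mS_{i_k}^\top(\mA x^k - b)$ with $\mM = \mS_{i_k}^\top \mA \mB^{-1} \mA^\top \mS_{i_k}$. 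The latter holds because $\mM \mM^\dagger$ is the orthogonal projector onto $\range{\mM} = \range{\mS_{i_k}^\top \mA}$, which clearly contains $\mS_{i_k}^\top(\mA x^k - b) = \mS_{i_k}^\top \mA(x^k - x^*)$.

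The second step is to plug this into the definition of the sketched loss. Using \eqref{eqn:residualk} and the formula \eqref{eq:Hi} for $\mH_{i_k}$,
\begin{equation*}
f_{i_k}(x^{k+1}) = (\mA x^{k+1} - b)^\top \mS_{i_k}\bigl(\mS_{i_k}^\top \mA \mB^{-1} \mA^\top \mS_{i_k}\bigr)^{\dagger} \mS_{i_k}^\top (\mA x^{k+1} - b),
\end{equation*}
and both flanking factors $\mS_{i_k}^\top (\mA x^{k+1} - b)$ vanish by the preceding identity, so $f_{i_k}(x^{k+1}) = 0$.

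There is no real obstacle here; the only matter of taste is whether to cite the equivalence between \eqref{eqn:NF} and \eqref{eqn:xupdate} (clean, one line) or verify feasibility by direct manipulation of the closed form (slightly longer, needs the pseudoinverse range identity). I would go with the former for brevity, since that equivalence is already used throughout the paper when introducing the algorithm.
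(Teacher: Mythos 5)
Your proof is correct, but it takes a different (and somewhat more elementary) route than the paper. The paper's proof stays in the $\mB^{1/2}$-transformed coordinates: it writes $f_{i_k}(x^{k+1}) = \dotprod{\mZ_{i_k}\mB^{1/2}(x^{k+1}-x^*),\, \mB^{1/2}(x^{k+1}-x^*)}$ via \Cref{eqn:fiBnorm}, then uses the one-step recursion \Cref{eqn:convstep1} together with the idempotence $\mZ_{i_k}\mZ_{i_k} = \mZ_{i_k}$ from \Cref{eq:Ziproj} to conclude $\mZ_{i_k}\mB^{1/2}(x^{k+1}-x^*) = 0$. You instead argue from feasibility: $x^{k+1}$ solves the constrained problem \Cref{eqn:NF}, hence $\mS_{i_k}^\top(\mA x^{k+1}-b)=0$, and this factor flanks the pseudoinverse in $\mH_{i_k}$, so the sketched loss vanishes. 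Both arguments are really the same geometric fact (the update is a projection onto the sketched constraint set), but yours avoids introducing $\mZ_{i_k}$ entirely and reads off the conclusion from the defining optimization problem; the paper's version has the advantage of reusing machinery (\Cref{lem:Z}, \Cref{eqn:convstep1}) that is already needed for the convergence analysis. Your fallback direct verification is also sound: the identity $\mM\mM^\dagger v = v$ applies because $\mS_{i_k}^\top(\mA x^k - b) = \mS_{i_k}^\top\mA\mB^{-1/2}\,\mB^{1/2}(x^k-x^*)$ lies in $\range{\mS_{i_k}^\top\mA\mB^{-1/2}} = \range{\mM}$, and since $\mB^{-1/2}$ is invertible this range equals $\range{\mS_{i_k}^\top\mA}$ as you claim.
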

\begin{proof}

Recall from~\Cref{eqn:fiBnorm}, we can write 
\begin{equation} f_{i_k}(x^{k+1}) = \norm{\mB^{1/2} (x^{k+1}-x^*)}_{\mZ_{i_k}}^2 = \dotprod{\mZ_{i_k}\mB^{1/2} (x^{k+1}-x^*), \mB^{1/2} (x^{k+1}-x^*)}.\label{eq:la39jj9ra}\end{equation}
We can show that the above is equal to zero by using~\Cref{eqn:convstep1} and~\Cref{lem:Z}  we have that
\begin{eqnarray*}
\mZ_{i_k}\mB^{1/2}( x^{k+1} -x^*)& \overset{\eqref{eqn:convstep1}}{=} &  \mZ_{i_k}\mB^{1/2}( x^k - \mB^{-1/2} \mZ_{i_k}\mB^{1/2}( x^k -x^*) -x^*)\nonumber\\
& = &  \mZ_{i_k}\mB^{1/2}( x^k-x^*) - \mZ_{i_k} \mZ_{i_k}\mB^{1/2}( x^k -x^*) )\nonumber\\
& \overset{\eqref{eq:Ziproj} }{=} &  \mZ_{i_k}\mB^{1/2}( x^k-x^*) - \mZ_{i_k}\mB^{1/2}( x^k -x^*) )\\
&=&0.  
\end{eqnarray*}
\end{proof}

We now use~\Cref{lem:rkizero} to additionally show that the convergence guarantee for the greedy method is strictly faster than for sampling with respect to any set of fixed probabilities.

\begin{theorem}\label{thm:maxDistBetterThanUnif}
Let $p \in \Delta_q$ where $ p_i >0$ for all $i = 1,\ldots, q$. Let $\sigma_{p}^2(\mB,\mS)$ be defined as in \Cref{defn:sigmap} of \Cref{defn:sigmas} and define

\begin{equation}\label{eqn:gamma1}
 \gamma \eqdef \frac{1}{\max_{i=1,\ldots,q} \sum_{j=1,\, j \neq i}^q p_j} > 1.
\end{equation}
We then have that the max-distance sketch-and-project method of \Cref{alg:maxDist} satisfies the following convergence guarantee
\begin{equation}\label{eqn:maxDistBetterThanUnif}
\norm{x^{k+1}-x^*}_{\mB}^2  \leq (1-\gamma \sigma_{{p}}^2(\mB,\mS))\norm{x^{k}-x^*}_{\mB}^2.
\end{equation}
\end{theorem}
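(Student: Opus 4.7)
The plan is to start from the per-step identity~\eqref{eq:onestepprog} specialised to the max-distance update, namely $\|x^{k+1}-x^*\|_\mB^2 = \|x^k-x^*\|_\mB^2 - \max_{i} f_i(x^k)$, and then prove the sharper lower bound
\begin{equation*}
\max_{i=1,\ldots,q} f_i(x^k) \;\geq\; \gamma\, \EE{i\sim p}{f_i(x^k)}
\end{equation*}
for the $\gamma$ defined in~\eqref{eqn:gamma1}. Combining this with~\eqref{eq:sigmaplower} of \Cref{lem:sigmalower} immediately gives the advertised contraction factor $1-\gamma\sigma_p^2(\mB,\mS)$.

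The key idea for producing the factor $\gamma>1$ is to exploit \Cref{lem:rkizero}, which guarantees that $f_{i_{k-1}}(x^k)=0$, so that at every iterate $x^k$ (for $k\geq 1$) at least one coordinate of the sketched-loss vector vanishes, namely the coordinate corresponding to the previously selected sketch. First, I would use the trivial bound that for \emph{any} probability vector $q\in\Delta_q$, $\max_i f_i(x^k) \geq \sum_j q_j f_j(x^k)$. Then, I would choose $q$ so that it re-weights the given distribution $p$ to put zero mass on the index $i_{k-1}$:
\begin{equation*}
q_j \;\eqdef\; \frac{p_j}{\sum_{\ell\neq i_{k-1}} p_\ell}\quad \text{for } j\neq i_{k-1}, \qquad q_{i_{k-1}} \eqdef 0.
\end{equation*}
Since $f_{i_{k-1}}(x^k)=0$, dropping that index does not change the sum, i.e.\ $\sum_{j\neq i_{k-1}} p_j f_j(x^k) = \sum_{j=1}^q p_j f_j(x^k) = \EE{i\sim p}{f_i(x^k)}$. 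The normaliser $\sum_{\ell\neq i_{k-1}} p_\ell$ is at most $\max_i \sum_{\ell\neq i} p_\ell$, so its reciprocal is at least $\gamma$, yielding $\max_i f_i(x^k) \geq \gamma \EE{i\sim p}{f_i(x^k)}$ as desired.

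Plugging this into~\eqref{eq:onestepprog} and applying~\eqref{eq:sigmaplower} produces the stated one-step contraction $\|x^{k+1}-x^*\|_\mB^2 \leq (1-\gamma\sigma_p^2(\mB,\mS))\|x^k-x^*\|_\mB^2$. The only subtle point—and what I anticipate being the main obstacle—is that \Cref{lem:rkizero} provides the vanishing coordinate only for $k\geq 1$, since it needs an ``$i_{k-1}$''. I would handle the $k=0$ case separately using the cruder bound $\max_i f_i(x^0)\geq \EE{i\sim p}{f_i(x^0)} \geq \sigma_p^2(\mB,\mS)\|x^0-x^*\|_\mB^2$, which gives a one-step factor of $1-\sigma_p^2 \leq 1-\gamma\sigma_p^2$ only if $\gamma\leq 1$; so either the statement is implicitly for $k\geq 1$, or one may simply interpret the theorem as describing the recursion driving the asymptotic rate, absorbing the (at most) single initial step into a constant. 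Everything else is routine algebra once the argument above is in place.
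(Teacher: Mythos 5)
Your proposal is correct and follows essentially the same route as the paper's proof: both exploit \Cref{lem:rkizero} to make the sketched loss at the previously selected index vanish, thereby bounding $\EE{i\sim p}{f_i(x^k)} \le \tfrac{1}{\gamma}\max_i f_i(x^k)$, and then combine this with \eqref{eq:onestepprog} and \eqref{eq:sigmaplower}; your renormalized auxiliary distribution is just a repackaging of the paper's direct bound $\sum_{j\neq i_k} p_j f_j \le \left(\max_j f_j\right)\sum_{j\neq i_k} p_j$. Your remark that the key inequality is only available for $k\ge 1$ is a legitimate point of care that the paper itself glosses over (its inequality \eqref{eqn:MaxDistBetterUnifSimpTheta} is derived at $x^{k+1}$ and then applied at $x^k$).
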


\begin{proof}
Recall that $f_{i_k}(x^{k+1}) = 0$ by \Cref{lem:rkizero}. Thus,
\begin{align}
    \EE{j\sim {p}}{f_j(x^{k+1})} \hspace*{.5em}&=\hspace*{.5em} \sum_{j=1, \, j \neq i_k}^q {p}_jf_j(x^{k+1})  \nonumber\\
    \hspace*{.5em}&\le\hspace*{.5em} \left(\max_{j=1,\ldots, q}f_j(x^{k+1})\right) \left( \sum_{j=1, \, j \neq i_k}^q {p}_j\right) \nonumber\\
    \hspace*{.5em}&\le\hspace*{.5em} \left(\max_{j=1,\ldots, q}f_j(x^{k+1})\right)\left( \max_{j=1,\ldots, q} \sum_{j=1, \, j \neq i}^q p_j\right) \nonumber\\
    \hspace*{.5em}&\overset{\mathclap{\eqref{eqn:gamma1}}}{=}\hspace*{.5em} \frac{\max_{j=1,\ldots, q}f_j(x^{k+1})}{\gamma}. \label{eqn:MaxDistBetterUnifSimpTheta}
\end{align}
From~\Cref{eq:onestepprog} we have that
\begin{eqnarray*} 
\norm{x^{k+1}-x^*}_{\mB}^2 
& \overset{\eqref{eq:onestepprog}}{=} &  \norm{x^{k}-x^*}_{\mB}^2-  \max_{i=1,\ldots, q}f_i(x^k) \nonumber \\
& \overset{\eqref{eqn:MaxDistBetterUnifSimpTheta}}{\leq} &
\norm{x^{k}-x^*}_{\mB}^2
- \gamma \EE{i\sim p}{f_i(x^{k})} \\
& \overset{\eqref{eq:sigmaplower}}{\leq} &
 \left(1-\gamma \sigma_{p}^2(\mB,\mS)\right)  \norm{x^{k}-x^*}_{\mB}^2. \nonumber
\end{eqnarray*}

\end{proof}

\subsection{The  proportional adaptive rule}\label{subsec:sampPropToRes}

We now consider the adaptive sampling strategy in which indices are sampled with probabilities proportional to the sketched loss values. For this sampling strategy, we derive a convergence rate that is at least twice as fast as that of \Cref{thm:fixed} for uniform sampling.

\begin{theorem}\label{thm:prop_conv}
Consider \Cref{alg:adaSKep} with $p^k = \frac{f(x^k)}{\norm{f(x^k)}_1}$. Let $u = \left(\tfrac{1}{q},\ldots,\tfrac{1}{q} \right) \in \Delta_q$ and $\sigma_{u}^2(\mB,\mS)$ be as defined in \Cref{defn:sigmap}. 
It follows that for $k\ge 1$,
\begin{equation}\label{eq:varconv1step}
\E{\norm{x^{k+1}-x^*}_{\mB}^2 \, | \, x^k}  \leq \left( 1-(1 + q^2 \VAR{i \sim u}{p^k_i})\sigma_{u}^2(\mB,\mS)\right)\norm{x^{k}-x^*}_{\mB}^2, 
\end{equation}
where $\VAR{i \sim u}{\cdot}$ denotes the variance taken with respect to the uniform distribution 
\begin{equation} \label{eq:varU}
\VAR{i \sim u}{v_i} \eqdef \frac{1}{q}\sum_{i=1}^q\left(v_i - \frac{1}{q}\sum_{s=1}^q v_s\right), \quad \forall v \in \R^q.\end{equation}
Furthermore we have that
\begin{equation}
\E{\norm{x^{k+1}-x^*}_{\mB}^2}  \leq \left(1-2\sigma_{u}^2(\mB,\mS)\right)^k \E{\norm{x^{1}-x^*}_{\mB}^2}.
\label{eq:varconvunrolled}
\end{equation}
\end{theorem}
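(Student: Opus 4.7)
I would prove both inequalities by chaining \Cref{lem:motivation}, a direct calculation that re-expresses $\EE{i\sim p^k}{f_i(x^k)}$ in terms of $\EE{i\sim u}{f_i(x^k)}$ and the variance of $p^k$, and the sketched-loss lower bound of \Cref{lem:sigmalower}. The fact that $f_{i_{k-1}}(x^k)=0$ from \Cref{lem:rkizero} is the ingredient that upgrades the rate when we iterate past the first step.

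\textbf{Step 1: the one-step inequality \eqref{eq:varconv1step}.} Starting from \Cref{lem:motivation}, I have
\[
\E{\norm{x^{k+1}-x^*}_{\mB}^2 \mid x^k} = \norm{x^k-x^*}_{\mB}^2 - \EE{i\sim p^k}{f_i(x^k)}.
\]
With $p^k_i = f_i(x^k)/\norm{f(x^k)}_1$, substitute to obtain $\EE{i\sim p^k}{f_i(x^k)} = \sum_i f_i(x^k)^2 / \sum_j f_j(x^k)$, and factor out $\EE{i\sim u}{f_i(x^k)} = (1/q)\sum_j f_j(x^k)$ to write
\[
\EE{i\sim p^k}{f_i(x^k)} \;=\; q\Bigl(\sum_i (p^k_i)^2\Bigr)\,\EE{i\sim u}{f_i(x^k)}.
\]
Because $\EE{i\sim u}{p^k_i} = 1/q$, the standard second-moment identity gives $q\sum_i (p^k_i)^2 = 1 + q^2\VAR{i\sim u}{p^k_i}$. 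Applying the uniform bound $\EE{i\sim u}{f_i(x^k)} \geq \sigma_u^2(\mB,\mS)\norm{x^k-x^*}_{\mB}^2$ from \Cref{lem:sigmalower} then yields \eqref{eq:varconv1step} directly.

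\textbf{Step 2: the unrolled inequality \eqref{eq:varconvunrolled}.} For every $k\geq 1$, \Cref{lem:rkizero} forces $f_{i_{k-1}}(x^k)=0$, hence $p^k_{i_{k-1}}=0$, so $p^k$ is supported on at most $q-1$ indices. Cauchy--Schwarz on the support gives $\sum_i (p^k_i)^2 \geq 1/(q-1)$, translating into $1+q^2\VAR{i\sim u}{p^k_i} \geq q/(q-1)$. Substituting this deterministic lower bound into \eqref{eq:varconv1step}, taking total expectations, and iterating from $k=1$ produces the claimed geometric decay from the base term $\E{\norm{x^1-x^*}_\mB^2}$.

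\textbf{Main obstacle.} The delicate point is extracting the explicit constant $2$ in the exponent of \eqref{eq:varconvunrolled}. My Cauchy--Schwarz bound on $\sum_i (p^k_i)^2$ only yields $1 + q^2\VAR{i\sim u}{p^k_i} \geq q/(q-1)$, which matches the claimed factor $2$ only when $q=2$; for $q>2$ a tighter argument is needed. To close this gap I would avoid bounding $\VAR{i\sim u}{p^k_i}$ and $\norm{x^k-x^*}_\mB^2$ separately, and instead try to lower-bound the joint quantity $\E{q^2\VAR{i\sim u}{p^k_i}\norm{x^k-x^*}_\mB^2}$ by exploiting the coupling between $p^k$ and the sketched-loss vector $f(x^k)$, so that the full identity $\EE{i\sim p^k}{f_i(x^k)} = \EE{i\sim u}{f_i(x^k)^2}/\EE{i\sim u}{f_i(x^k)}$ can be played against $\sigma_u^2(\mB,\mS)$ in a single step rather than through the $\sum_i (p^k_i)^2 \geq 1/(q-1)$ estimate alone.
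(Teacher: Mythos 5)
Your Step 1 reproduces the paper's derivation of \eqref{eq:varconv1step} essentially verbatim: the paper likewise writes $\EE{i\sim p^k}{f_i(x^k)} = \left(1+q^2\VAR{i \sim u}{p_i^k}\right)\EE{i\sim u}{f_i(x^k)}$ via the second-moment identity for the uniform distribution and then invokes \Cref{lem:motivation} and \Cref{lem:sigmalower}. That part is correct and is the same approach.

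For Step 2, the obstacle you flag is real, and you should know that the paper does not actually overcome it. The paper's proof bounds the variance by dropping all terms except $i=i_k$ and asserts $\frac{1}{q}\left(p_{i_k}^{k+1}-\frac{1}{q}\right)^2 = \frac{1}{q^2}$; since $p_{i_k}^{k+1}=0$ this quantity equals $\frac{1}{q^3}$, not $\frac{1}{q^2}$, so the paper's own argument only yields $1+q^2\VAR{i \sim u}{p_i^{k+1}} \geq 1+\frac{1}{q}$. Your Cauchy--Schwarz bound $\sum_i (p_i^{k+1})^2 \geq \frac{1}{q-1}$ is in fact the optimal deterministic bound given only that one coordinate vanishes (it is attained when all nonzero sketched losses are equal), so no argument based solely on the variance of $p^{k+1}$ can produce the constant $2$ for $q>2$: the honest conclusion of this proof strategy is $\E{\norm{x^{k+1}-x^*}_{\mB}^2 \mid x^k} \leq \left(1-\tfrac{q}{q-1}\,\sigma_u^2(\mB,\mS)\right)\norm{x^k-x^*}_{\mB}^2$ for $k\geq 1$. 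Your proposed repair --- coupling $\VAR{i \sim u}{p_i^k}$ with $\norm{x^k-x^*}_{\mB}^2$ inside the expectation --- is not carried out, and it is unclear it could help, since the ratio $\EE{i\sim p^k}{f_i(x^k)}/\EE{i\sim u}{f_i(x^k)} = q\sum_i(p_i^k)^2$ can equal exactly $\frac{q}{q-1}$ pointwise. So the correct assessment is: your proposal proves \eqref{eq:varconv1step} exactly as the paper does, it proves \eqref{eq:varconvunrolled} with $2$ replaced by $\frac{q}{q-1}$, and the remaining discrepancy is an arithmetic error in the paper's proof (and, for $q>2$, apparently in the stated constant) rather than a gap you failed to fill.
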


\begin{proof}
First note that for $i \sim u$ we have that
\begin{equation}\label{eqn:var}
    \VAR{u }{f_i(x^k)} = \EE{u} {(f_i(x^k))^2} - \EE{u} {f_i(x^k)}^2\\
    = \frac{1}{q}\sum(f_i(x^k))^2 - \frac{1}{q^2} \left( \sum f_i(x^k)\right)^2.
\end{equation}
Given that $p^k = \frac{f(x^k)}{\norm{f(x^k)}_1}$,  
\begin{align}
   \EE{i\sim p^k}{f_i(x^k)} \hspace*{.5em}&=\hspace*{.5em}  \sum_{i=1}^q p_i^k f_i(x^k) \nonumber\\
   \hspace*{.5em}& =\hspace*{.5em} \sum_{i=1}^q \frac{(f_i(x^k))^2}{\sum_{i=1}^q f_i(x^k)} \nonumber\\
    \hspace*{.5em}& \overset{\mathclap{\eqref{eqn:var}}}{=} \hspace*{.5em} \frac{q\VAR{u }{f_i(x^k)} + \frac{1}{q} \left( \sum f_i(x^k)\right)^2}{\sum_{i=1}^q f_i(x^k) }\nonumber\\
    \hspace*{.5em}&= \hspace*{.5em} \left(q^2\VAR{u }{\frac{f_i(x^k)}{\sum_{i=1}^q f_i(x^k)}} + 1\right)\frac{1}{q}\sum_{i=1}^q f_i(x^k).\label{eqn:nonunif} 
\end{align}
Recalling that $p_i^k = \frac{f_i(x^k)}{\sum_{i=1}^q f_i(x^k)}$ and using~\Cref{lem:motivation} we have that
\begin{equation*} 
\E{\norm{x^{k+1}-x^*}_{\mB}^2 \, | \, x^k }
 \leq   \norm{x^{k}-x^*}_{\mB}^2- (1 + q^2 \VAR{u }{p_i^k})\sigma_{u}^2(\mB,\mS)\norm{x^{k}-x^*}_{\mB}^2.
\end{equation*}

Furthermore, due to \Cref{lem:rkizero} we have that $p^{k+1}_{i_k} =0.$ Therefore
\begin{align*}
\VAR{u }{p_i^{k+1}}& \overset{\eqref{eq:varU}}{=} \frac{1}{q}\sum_{i=1}^q\left(p_i^{k+1} - \frac{1}{q} \sum_{s=1}^q p_s^{k+1}\right)^2\\ 
\hspace*{.5em}&= \hspace*{.5em} \frac{1}{q}\sum_{i=1}^q\left(p_i^{k+1} - \frac{1}{q}\right)^2  \geq  \frac{1}{q}\left(p_{i_k}^{k+1} - \frac{1}{q}\right)^2  = \frac{1}{q^2}.
\end{align*}
This lower bound on the variance gives the following upper bound on~\Cref{eq:varconv1step}
\begin{equation*}
\E{\norm{x^{k+1}-x^*}_{\mB}^2 \, | \, x^k}  \leq  \left(1-2\sigma_{u}^2(\mB,\mS)\right)\norm{x^{k}-x^*}_{\mB}^2. 
\end{equation*}
Taking the expectation and unrolling the recursion  gives~\Cref{eq:varconvunrolled}. 
\end{proof}
Thus by sampling proportional to the sketched losses the sketch-and-project method enjoys a strictly faster convergence rate as compared to sampling uniformly. How much faster depends on the variance of the adaptive probabilities through $1 + q^2 \VAR{u}{p_i^k}$ which in turn depends on the variance of the sketched losses.

 This same variance term is used in \cite{perekrestenko2017fasterCD} to analyze the convergence of an adaptive sampling strategy based on the dual residuals for coordinate descent applied to regularized loss functions and in \cite{osokin16} for adaptive sampling in the block-coordinate Frank-Wolfe algorithm for optimizing structured support vector machines.

\subsection{Capped adaptive sampling}\label{subsec:BaiWuExt}
We now extend the capped adaptive sampling method and convergence guarantees of \cite{BaiWuSISC2018} and \cite{BAIWu201821} for the randomized Kaczmarz setting to the general sketch-and-project setting, see~\Cref{alg:adaSKepUk}. 
Let $p \in \Delta_q$ be a fixed reference probability. 
At each iteration $k$ an index set $\cW_k$ is constructed on line 4 of \Cref{alg:adaSKepUk} that contains indices whose sketched losses are sufficiently close to the maximal sketched loss and that are at least as large as $\EE{i\sim p}{f_i(x^k)}$.  At each iteration, the adaptive probabilities $p_i^k$ are zero for all indices that are not included in the set $\cW_k$. 
The input parameter $\theta \in [0,\,1]$ controls how aggressive the sampling method is. In particular, if $\theta=1$, the method reduces to max-distance sampling. As $\theta$ approaches 0, the sampling method remains adaptive, as only indices corresponding to sketched losses larger than $\EE{i\sim p}{f_i(x^k)}$ are sampled with non-zero probability.
 In \cite{BaiWuSISC2018}, the authors originally introduced an adaptive randomized Kaczmarz method with $\theta = 1/2$. They generalized this in \cite{BAIWu201821} to allow for the more general choice of $\theta\in[0,\,1]$.

\Cref{alg:adaSKepUk} presented here generalizes the method proposed in \cite{BAIWu201821} in three ways. The first is the generalization of the method from the randomized Kaczmarz setting to the more general sketch-and-project setting. The second generalization allows for the use of any fixed reference probability distribution $p \in \Delta_q$, whereas the method of \cite{BaiWuSISC2018} uses sampling proportional to the squared row norms of the matrix $\mA$ as the reference probability. The third generalization is to allow for the use of any adaptive sampling strategy such that the probabilities $p_i^k$ are zero outside of the set $\cW_k$. The methods proposed in \cite{BaiWuSISC2018} and \cite{BAIWu201821} specify that the adaptive probabilities be chosen as $p_{i_k}^k =  f_i(x^k) \mathbf{1}_{i \in \cW_k}/ \sum_{j \in \cW_k} f_j(x^k)$, but this restriction is unnecessary in proving the accompanying convergence result. 

Below, we provide two convergence guarantees for \Cref{alg:adaSKepUk}. \Cref{thm:Bai} provides a convergence guarantee in terms of the spectral constants $\sigma_{\infty}^2(\mB,\mS)$ and  $\sigma_{ p}^2(\mB,\mS)$ of \Cref{defn:sigmas} and the parameter $\theta$. 
\Cref{thm:RGRK} provides a direct generalization of the convergence rate derived in \cite{BAIWu201821}.

\begin{algorithm}
\begin{algorithmic}[1]
\State \textbf{input:}  $x^0\in \R^{n},$ $\mA\in \R^{m\times n}$, $b\in\R^m$,  $p \in \Delta_q$, $\theta\in[0, \,1]$ and a set of sketching matrices $\{\mS_1,\dots, \mS_q\}$
\State \textbf{initialize:} $f_i(x^{0}) = \norm{\mA x^0-b}_{\mH_i} \in \R_+^m$ for $i=1,\ldots, q.$
\For {$k = 0, 1, 2, \dots$}
    \State  $\cW_k = \left\{i \; | \; f_i(x^k) \geq \theta \max_{j=1,\ldots, q} f_j(x^k) + (1-\theta)\EE{j\sim p}{f_j(x^k)}\right\}$
    \State Choose $p^k \in \Delta_q$ such that  $\mbox{support}(p^k)\subset \cW_k$
	\State $i_k\sim p^k$
	\State $x^{k+1} = x^k - \mB^{-1}\mA^\top \mH_{i_k}(\mA x^k-b)$
	\label{ln:xupdate22}
	\State update $f_i(x^{k+1}) = \norm{\mA x^{k+1}-b}_{\mH_i}$ for $i=1,\ldots, q.$
\EndFor
\State \textbf{output:} last iterate $x^{k+1}$
\end{algorithmic}
\caption{Capped Adaptive Sketch-and-Project}
\label{alg:adaSKepUk}
\end{algorithm}

\begin{theorem} \label{thm:Bai}
Consider \Cref{alg:adaSKepUk}.
Let $p \in \Delta_q$ be a  fixed \emph{reference probability} and $\theta\in [0,1]$.
Let 
\begin{equation}
    \cW_k = \left\{i \; | \; f_i(x^k) \geq \theta \max_{j=1,\ldots, q} f_j(x^k) + (1-\theta)\EE{j\sim p}{f_j(x^k)}\right\}. \label{eq:Ukdefgamma}
\end{equation}
 It follows that
\begin{equation}\label{eq:o9s88js84js4}
\E{\norm{x^{k}-x^*}_{\mB}^2}  \leq \left(1-\theta \sigma_{\infty}^2(\mB,\mS) - (1-\theta)\sigma_{p}^2(\mB,\mS)\right)^k\norm{x^{0}-x^*}_{\mB}^2.
\end{equation}
\end{theorem}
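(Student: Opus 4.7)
The plan is to chain together the one-step progress identity of \Cref{lem:geometry} with the lower bounds on $\max_i f_i(x^k)$ and $\EE{i\sim p}{f_i(x^k)}$ provided by \Cref{lem:sigmalower}, exploiting the fact that the capped rule only samples from $\cW_k$.

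First, since $\mbox{support}(p^k)\subset \cW_k$, for every $i$ with $p^k_i>0$ the defining condition of $\cW_k$ in~\Cref{eq:Ukdefgamma} gives
\[ f_i(x^k) \;\geq\; \theta \max_{j=1,\ldots,q} f_j(x^k) + (1-\theta)\,\EE{j\sim p}{f_j(x^k)}. \]
Because this lower bound is deterministic (it does not depend on $i$), taking the expectation of the left hand side under $i\sim p^k$ preserves the inequality, yielding
\[ \EE{i\sim p^k}{f_i(x^k)} \;\geq\; \theta \max_{j=1,\ldots,q} f_j(x^k) + (1-\theta)\,\EE{j\sim p}{f_j(x^k)}. \]

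Next I would invoke~\Cref{eq:sigmainflower} and~\Cref{eq:sigmaplower} from \Cref{lem:sigmalower}, which respectively give $\max_j f_j(x^k)\geq \sigma_\infty^2(\mB,\mS)\|x^k-x^*\|_{\mB}^2$ and $\EE{j\sim p}{f_j(x^k)}\geq \sigma_p^2(\mB,\mS)\|x^k-x^*\|_{\mB}^2$. Substituting into the previous display produces
\[ \EE{i\sim p^k}{f_i(x^k)} \;\geq\; \bigl(\theta\sigma_\infty^2(\mB,\mS) + (1-\theta)\sigma_p^2(\mB,\mS)\bigr)\,\norm{x^k-x^*}_{\mB}^2. \]

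Finally, I would apply the conditional one-step identity from~\Cref{lem:motivation} (equivalently, the expectation of~\Cref{eq:onestepprog}) to write
\[ \E{\norm{x^{k+1}-x^*}_{\mB}^2 \,|\, x^k} \;=\; \norm{x^k-x^*}_{\mB}^2 - \EE{i\sim p^k}{f_i(x^k)}, \]
and combine this with the lower bound above to obtain the contraction
\[ \E{\norm{x^{k+1}-x^*}_{\mB}^2 \,|\, x^k} \;\leq\; \bigl(1-\theta\sigma_\infty^2(\mB,\mS)-(1-\theta)\sigma_p^2(\mB,\mS)\bigr)\norm{x^k-x^*}_{\mB}^2. \]
Taking total expectation and unrolling the recursion $k$ times delivers~\Cref{eq:o9s88js84js4}.

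There is no real obstacle here; the only subtlety worth verifying is the first step, namely that restricting the sample to $\cW_k$ preserves the pointwise threshold inequality in expectation. The work done in setting up \Cref{lem:geometry}, \Cref{lem:motivation}, and \Cref{lem:sigmalower} has already packaged all of the geometry, so the proof reduces to this short chain of inequalities.
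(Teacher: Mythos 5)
Your proposal is correct and follows essentially the same route as the paper: the paper likewise observes that $\sum_{i\in\cW_k}p_i^k f_i(x^k)$ is bounded below by the capping threshold (since $p^k$ is supported on $\cW_k$ and sums to one), applies \Cref{lem:sigmalower} to each term, and feeds the result into \Cref{lem:motivation} before unrolling. The only detail the paper adds that you omit is the quick check that $\cW_k$ is non-empty (because $\max_j f_j(x^k)\geq \EE{j\sim p}{f_j(x^k)}$), which ensures a valid $p^k$ exists.
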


\begin{proof}
First note that $\cW_k$ is not empty since 
\[ \max_{j=1,\ldots, q} f_j(x^k) \geq  \EE{j\sim p}{f_j(x^k)},\]
and thus $\arg\max_{j=1,\ldots, q} f_j(x^k) \in \cW_k.$
Since $p_i^k = 0$ for all $i\not\in\cW_k$,~\Cref{lem:motivation} gives that
\begin{equation}
\EE{i\sim p^k}{\norm{x^{k+1}-x^*}_{\mB}^2 \, | \, x^k }
= \norm{x^{k+1}-x^*}_{\mB}^2- \sum_{i\in \cW_k} p_i^k f_i(x^k).\label{eqn:tempj8j8s44s}
\end{equation}
We additionally have 
\begin{align}
\sum_{i \in \cW_k} f_i(x^k) p_i^k 
\hspace*{1.5em}&\overset{\mathclap{\eqref{eq:Ukdefgamma}}}{\geq}  \hspace*{1.5em}
\sum_{i \in \cW_k}\left( \theta \max_{j=1,\ldots, q} f_j(x^k) + (1-\theta)\EE{j\sim p}{f_j(x^k)}\right) p_i^k \nonumber\\
\hspace*{1.5em}& =\hspace*{1.5em}  \theta \max_{j=1,\ldots, q} f_j(x^k) + (1-\theta)\EE{j\sim p}{f_j(x^k)}  \label{eqn:needlaterj83j}\\ 
\hspace*{1.5em}&\overset{\mathclap{\Cref{lem:sigmalower}}}{\geq}\hspace*{1.5em}  \left(\theta \sigma_{\infty}^2(\mB,\mS) +
(1-\theta)\sigma_{p}^2(\mB,\mS) \right) \norm{x^k-x^*}_{\mB}^2. \label{eq:j8j8aj3}
\end{align}
Using~\Cref{eq:j8j8aj3} to bound~\Cref{eqn:tempj8j8s44s} and taking the expectation gives the result.
\end{proof}

The resulting convergence rate is a convex combination of the spectral constant $\sigma_{\infty}^2(\mB,\mS)$ which corresponds to the max-distance convergence rate guarantee and $\sigma_{ p}^2(\mB,\mS)$ corresponding to the convergence rate guarantee for the fixed reference probabilities $ p$. This convex combination is in terms of the parameter $\theta$ and we can see that as $\theta$ approaches 1 the method and convergence guarantee approach that of max-distance. When $\theta$ is close to 0, the convergence guarantee approaches that of a fixed distribution, but still filters out sketches with sketched losses less than $\EE{j\sim p}{f_j(x^k)}$. This suggests that for $\theta\approx 0$ the convergence rate guarantee is loose.

We now explicitly extend the analysis of Bai and Wu's work of \cite{BAIWu201821} to derive a convergence rate guarantee for our more general \Cref{alg:adaSKepUk}.

\begin{theorem}\label{thm:RGRK}
Consider \Cref{alg:adaSKepUk}.
Let $p\in\Delta_q$ be a set of fixed \emph{reference probabilities} and $\theta\in [0,1]$.
Let 
\begin{equation}\label{eq:theta}
 \gamma \eqdef \frac{1}{\max_{i=1,\ldots,q} \sum_{j=1,\, j \neq i}^q p_j} > 1.
\end{equation}
 It follows for $k \geq 1$
\begin{align}\label{eqn:BaiWuBoundExt}
&\E{\norm{x^{k}-x^*}_{\mB}^2} \\
&\leq \left(1-\left(\theta\gamma +(1-\theta)\right)\sigma_{p}^2(\mB,\mS)\right)^{k-1} \left(1-\theta \sigma_{\infty}^2(\mB,\mS) - (1-\theta)\sigma_{p}^2(\mB,\mS)\right)\norm{x^{0}-x^*}_{\mB}^2,\nonumber
\end{align}
where the expectation is taken with respect to the probabilities prescribed by Algorithm~\ref{alg:adaSKepUk}.
\end{theorem}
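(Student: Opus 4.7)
The plan is to combine the one-step decrease already derived in the proof of Theorem~\ref{thm:Bai} with the zeroing property of Lemma~\ref{lem:rkizero}, in the same spirit as the improved argument behind Theorem~\ref{thm:maxDistBetterThanUnif}. Concretely, I split the iteration count: for the very first step ($x^0\to x^1$) I will use the bound from Theorem~\ref{thm:Bai} directly, obtaining the factor $\bigl(1-\theta\sigma_\infty^2(\mB,\mS)-(1-\theta)\sigma_p^2(\mB,\mS)\bigr)$; for each subsequent step I will get the strictly better factor $\bigl(1-(\theta\gamma+(1-\theta))\sigma_p^2(\mB,\mS)\bigr)$. Multiplying these together yields exactly~\Cref{eqn:BaiWuBoundExt}.

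The crux is the improvement for $k\geq 1$. Starting from the intermediate inequality~\eqref{eqn:needlaterj83j} inside the proof of Theorem~\ref{thm:Bai}, namely
\begin{equation*}
\sum_{i\in\cW_k} f_i(x^k)\, p_i^k \;\geq\; \theta \max_{j=1,\dots,q} f_j(x^k) + (1-\theta)\,\EE{j\sim p}{f_j(x^k)},
\end{equation*}
I would lower bound $\max_j f_j(x^k)$ in terms of $\EE{j\sim p}{f_j(x^k)}$. Lemma~\ref{lem:rkizero} gives $f_{i_{k-1}}(x^k)=0$, so (just as in the proof of Theorem~\ref{thm:maxDistBetterThanUnif})
\begin{equation*}
\EE{j\sim p}{f_j(x^k)} \;=\; \sum_{j\neq i_{k-1}} p_j\,f_j(x^k)
\;\leq\; \Bigl(\max_j f_j(x^k)\Bigr)\Bigl(\max_{i}\sum_{j\neq i} p_j\Bigr)
\;=\; \frac{1}{\gamma}\max_j f_j(x^k).
\end{equation*}
Substituting $\max_j f_j(x^k)\geq \gamma\,\EE{j\sim p}{f_j(x^k)}$ back into the sum bound gives
\begin{equation*}
\sum_{i\in\cW_k} f_i(x^k)\, p_i^k \;\geq\; \bigl(\theta\gamma + (1-\theta)\bigr)\,\EE{j\sim p}{f_j(x^k)}
\;\geq\; \bigl(\theta\gamma + (1-\theta)\bigr)\,\sigma_p^2(\mB,\mS)\,\norm{x^k-x^*}_\mB^2,
\end{equation*}
where the last inequality uses~\eqref{eq:sigmaplower} of Lemma~\ref{lem:sigmalower}. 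Combining with~\eqref{eqn:tempj8j8s44s} yields
\begin{equation*}
\EE{i\sim p^k}{\norm{x^{k+1}-x^*}_\mB^2\mid x^k}
\;\leq\;\bigl(1-(\theta\gamma+(1-\theta))\sigma_p^2(\mB,\mS)\bigr)\norm{x^k-x^*}_\mB^2,\qquad k\geq 1.
\end{equation*}

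Finally, taking the total expectation, unrolling this recurrence from $k=1$ up to the final index, and using Theorem~\ref{thm:Bai} to handle the first step $x^0\to x^1$ (where Lemma~\ref{lem:rkizero} is not yet available because there is no preceding sample), produces the product of one factor of $\bigl(1-\theta\sigma_\infty^2-(1-\theta)\sigma_p^2\bigr)$ with $k-1$ factors of $\bigl(1-(\theta\gamma+(1-\theta))\sigma_p^2\bigr)$, which is exactly~\Cref{eqn:BaiWuBoundExt}. The only subtle point, and the main obstacle to state cleanly, is ensuring that the $\gamma$-improvement only kicks in after the first iteration, so the bookkeeping around the exponent $k-1$ versus $k$ and the isolated leading factor must be done carefully; everything else is a direct reuse of the proofs of Theorems~\ref{thm:Bai} and~\ref{thm:maxDistBetterThanUnif}.
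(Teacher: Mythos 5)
Your proposal is correct and follows essentially the same route as the paper's own proof: it derives $\max_j f_j(x^k)\geq\gamma\,\EE{j\sim p}{f_j(x^k)}$ for $k\geq 1$ from Lemma~\ref{lem:rkizero} (exactly the argument of \eqref{eqn:MaxDistBetterUnifSimpTheta}), feeds this into \eqref{eqn:needlaterj83j} and \eqref{eqn:tempj8j8s44s} to get the improved per-step factor $1-(\theta\gamma+(1-\theta))\sigma_p^2(\mB,\mS)$, and uses the one-step bound of Theorem~\ref{thm:Bai} for the initial step $x^0\to x^1$ where no loss is yet guaranteed to vanish. The bookkeeping you flag (one leading factor plus $k-1$ improved factors) is handled identically in the paper.
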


\begin{proof}
By \Cref{lem:rkizero}, at least one of the sketched losses is guaranteed to be zero for each iterations $k\geq 1$. Making the conservative assumption that this sketched loss corresponds to the smallest probability $\hat p_{i_k}^k$, we have, by \Cref{eqn:MaxDistBetterUnifSimpTheta}, that for an adaptive sampling strategy that assigns $p_i^k=0$ to sketches $\mS_i$ with a sketched loss $f_i(x^k)=0$ that
\begin{equation}
\displaystyle \frac{\max_{j=1,\ldots, q} f_j(x^{k+1})}{\EE{j\sim p}{f_j(x^{k+1})}} \geq \gamma. \label{eq:thetaineq}
 \end{equation}
 
Combining this with \Cref{eqn:needlaterj83j},
\begin{align}
\sum_{i \in \cW_k} f_i(x^{k+1}) p_i^{k+1}\hspace*{.5em} & \geq \hspace*{.5em} \left(\theta \frac{\max_{j=1,\ldots, q} f_j(x^{k+1})}{\EE{j\sim p}{f_j(x^{k+1})}} + (1-\theta)\right)\EE{j\sim p}{f_j(x^{k+1})}  \nonumber \\
\hspace*{.5em}& \overset{\mathclap{\eqref{eq:thetaineq}}}{\geq}\hspace*{.5em}  \left(\theta \gamma+ (1-\theta)\right)\EE{j\sim p}{f_j(x^{k+1})}  \nonumber \\
\hspace*{.5em}& \overset{\mathclap{\eqref{defn:sigmap}}}{\geq} \hspace*{.5em} \left(\theta \gamma+ (1-\theta)\right)\sigma_{p}^2(\mB,\mS).\label{eqn:kgr1bound}
\end{align}
Consequently for $k\ge 1$, by \Cref{eqn:tempj8j8s44s}, we then have
\[
\E{\norm{x^{k+1}-x^*}_{\mB}^2 \, | \, x^k } \le \norm{x^k-x^*}_{\mB}^2- \left(\theta \gamma+ (1-\theta)\right)\sigma_{p}^2(\mB,\mS) \norm{x^k-x^*}_{\mB}^2.
\]
Taking the expectation and unrolling the recursion gives,
\[
\E{\norm{x^{k+1}-x^*}_{\mB}^2 } \le \left(1-\left(\theta \gamma+ (1-\theta)\right)\sigma_{p}^2(\mB,\mS) \right)^{k-1} \norm{x^1-x^*}_{\mB}^2.
\]
Since, at the very first update, we cannot guarantee that there exists $i\in [1,\ldots, q]$ such that $f_i(x^{0}) = 0$, \Cref{eqn:kgr1bound} is not guaranteed for $k=0$. So instead we use~\Cref{eq:o9s88js84js4} to unroll the last step in this recurrence to arrive \Cref{eqn:BaiWuBoundExt}.
\end{proof}

The convergence rate for \Cref{alg:adaSKepUk} of \Cref{thm:RGRK} is an improvement over the convergence rate guarantee for a fixed probability distribution since $\gamma>1$. 
As was the case for \Cref{thm:Bai}, the convergence rate is maximized when $\theta = 1$, at which point the resulting  method is equivalent to the max-distance sampling strategy of \Cref{alg:maxDist}. Further, when $\theta =1$, \Cref{thm:RGRK} guarantees
\begin{equation*}
\E{\norm{x^{k}-x^*}_{\mB}^2}  \leq \left(1-\gamma\sigma_{p}^2(\mB,\mS) \right)^{k-1}\left(1-\sigma_{p}^2(\mB,\mS)\right)\norm{x^{0}-x^*}_{\mB}^2.
\end{equation*}
For $\theta = 0$, \Cref{thm:RGRK} recovers the same convergence guarantee as for sampling according to the non-adaptive probabilities $p$.

\begin{table}[]
    \centering
    \begin{tabular}{|c|c|c|c|c|}
    \hline
        \textbf{\makecell{Sampling\\Strategy}} & \textbf{\makecell{Convergence \\Rate Bound}} & \textbf{\makecell{Rate Bound\\Shown In}} 
        \\        \hline \hline
        Fixed, $p_i^k \equiv p_i$ & $1-\sigma_p^2(\mB,\mS)$  &  \cite{Gower2015},  \Cref{thm:fixed} 
        \\ \hline
        Max-distance & $1-\sigma_\infty^2(\mB,\mS)$ &  \Cref{thm:maxDistConv} 
        \\ \hline
        $p_i^k\propto f_i(x^k)$ & $1-2\sigma_{u}^2(\mB,\mS)$ & \Cref{thm:prop_conv}
        \\ \hline
        Capped & $1 - \left(1 + \epsilon\right)\sigma_{ p}^2(\mB,\mS)$ &  \Cref{thm:RGRK} 
        \\ \hline
    \end{tabular}
    \caption{Summary of convergence guarantees of \Cref{sec:convergence}, where $\gamma= 1/ \underset{i=1,\ldots, m}{\max}\sum_{j=1, j\ne i}^m p_i$ as defined in \Cref{eqn:gamma1} and $\epsilon = \theta(\gamma-1) \le \theta \tfrac{1}{m}$.}
    \label{tab:conv_summary}
\end{table}

\section{Implementation tricks and computational complexity}
\label{sec:imp_tricks}

One can perform adaptive sketching with the same order of cost per iteration as the standard non-adaptive sketch-and-project method when $\tau q$, the number of sketches $q$ times the sketch size $\tau$, is not significantly larger than the number of columns $n$. In particular, adaptive sketching methods can be performed for a per-iteration cost of $O(\tau^2 q + \tau n)$, whereas the standard non-adaptive sketch-and-project method has a per-iteration cost of $O( \tau n)$. 
The main computational costs of adaptive sketch-and-project (\Cref{alg:adaSKep}) at each iteration come from computing the sketched losses $f_i(x^k)$ of \Cref{eqn:residualk} and updating the iterate from $x^k$ to $x^{k+1}$ via \Cref{eqn:xupdate}. 
The iterate update for $x^k$ and the formula for the sketched loss $f_i(x^k)$ both require calculating what we call the \emph{sketched residual}, 
\begin{equation}\label{def:sketched-res}
    \mR_i^k \eqdef  \mC_i^\top \mS_i^\top( \mA x^k - b),
\end{equation}
where $\mC_i$ is any square matrix satisfying 
$    \mC_i \mC_i^\top = (\mS_{i}^\top \mA \mB^{-1} \mA^\top \mS_{i})^\dagger.$ 
The adaptive methods considered here require the sketched residual $\mR_i^k$ for each sketch index $i=1, 2, \ldots, q$ at each iteration. For such adaptive methods, it is possible to update the iterate $x^k$ and compute the sketched losses $f_i(x^k)$ more efficiently if one maintains the set of sketched residuals
$\{\mR_i^k : i = 1, 2, \ldots, q\}$ in memory. 
\Cref{sec:imp_tricks_full} discusses the costs of adaptive sketch-and-project methods in more detail. Pseudocode for efficient implementation is provided in \Cref{alg:adasketch}.

Different sampling strategies require different amounts of computation as well. Among the adaptive sampling strategies considered here, max-distance sampling requires the least amount of computation followed by sampling proportional to the sketched losses. Capped adaptive sampling requires the most computation. The costs for each sampling strategy are discussed in detail in  \Cref{sec:sample_spec_costs} and are summarized in \Cref{tab:general-rule-costs}.

\section{Summary of consequences for special cases}\label{sec:costs-and-convergence-summaries}

We now discuss the consequences of the convergence analyses of \Cref{sec:convergence} and the computational costs detailed in \Cref{sec:imp_tricks} for the special sketch-and-project subcases of randomized Kaczmarz and coordinate descent. For $\mC_i$ as defined in \Cref{def:C_i}, in both the randomized Kaczmarz method and coordinate descent, $\mC_i$ is a scalar and thus its value is fixed.

\subsection{Adaptive Kaczmarz}\label{subsec:adakacz-summary}

By choosing the parameter matrix $\mB=\mI$ and sketching matrices $\mS_i = e_i$ for $i = 1,\ldots,m$ where $e_i\in \R^n$ is the $i\thup$ coordinate vector, we arrive at the Kaczmarz method introduced in \Cref{subsec:RK_intro}. For randomized Kaczmarz, the sketches $\mS_i = e_i$ isolate a single row of the matrix $\mA$, as $\mS_i^\top \mA = \mA_{i:}$. In this setting, the number of sketches $q = m$ for $\mA\in \R^m$, and the sketch size is $\tau = 1$. In order to perform the adaptive update efficiently, the matrices
\[
\mB^{-1}\mA^\top \mS_{i}\mC_{i} = \frac{\mA_{i:}^\top}{\norm{\mA_{i:}}} 
\; \mbox{ and }\;
\mC_i^\top\mS_i^\top \mA \mB^{-1}\mA^\top \mS_{j}\mC_{j} = \frac{\ve{\mA_{i:}}{\mA_{j:}}}{\norm{\mA_{i:}}\norm{\mA_{j:}}} \; \forall\, i, j = 1, 2, \ldots m
\] 
should be precomputed.

In order to succinctly express the convergence rates, we define the diagonal probability matrix $\mP= \diag(p_1,\ldots,p_m)$ and the normalized matrix $\bar \mA \eqdef \mD_{RK}^{-1}\mA$, with $\mD_{RK} \eqdef \diag\left(\norm{\mA_{1:}}_2,
\ldots,\norm{\mA_{m:}}_2\right)$ as in \cite{Nutini2016}. In the randomized Kaczmarz setting, the projection matrix $\mZ_i$ as defined in \Cref{eqn:Zk} is the orthogonal projection onto the $i\thup$ row of $\mA$ and takes the form 
\[
\mZ_i = \frac{\mA_{i:}\mA_{i:}^\top}{\norm{\mA_{i:}^2}}.
\]
We then have 
\[
\EE{i\sim p}{\mZ_i} = \mD_{RK}^{-1} \mA \mP \mA^\top \mD_{RK}^{-1} = \bar \mA^\top \mP \bar \mA.
\]
The costs and convergence rates for the adaptive sampling strategies discussed in \Cref{sec:methods} applied to the Kaczmarz method are summarized in \Cref{tab:adakacz}, where we used the notation $\norm{x}_{\infty} \eqdef \max_i |x_i|$ for any vector $x$.

\begin{table}[]
\renewcommand*{\arraystretch}{1.25}
    \centering
\resizebox{\columnwidth}{!}{%
    \begin{tabular}{|c|c|c|c|}
    \hline
        \makecell{\textbf{Sampling}\\ \textbf{Strategy}} & \makecell{\textbf{Convergence}\\ \textbf{Rate Bound}} & \makecell{\textbf{Rate Bound}\\ \textbf{Shown In}} & \makecell{\textbf{Flops Per}\\ \textbf{Iteration}}
        \\        \hline \hline
        Uniform  & $1-\frac{1}{m}\lambda_{\min}^+(\bar \mA^\top \bar \mA)$  & \cite{Nutini2016}, \Cref{thm:fixed} & $2 \min(n,m) + 2n$\\ \hline
        $p_i\propto \norm{\mA_{i:}}_2^2$  & $1 - \frac{\lambda_{\min}^+( \mA^\top \mA)}{\norm{\mA}_F^2}$ & \cite{Strohmer2009}, \Cref{thm:fixed} & $2 \min(n,m) + 2n$\\ \hline
        Max-distance & $1-\underset{v \in \range{\mA^\top}}{\min} \frac{ \norm{\bar\mA v}_\infty}{\norm{v}_2}$ & \cite{Nutini2016}, \Cref{thm:maxDistConv} & $3 m + 2 n$ \\ \hline
        $p_i^k\propto f_i(x^k)$ & $1-\frac{2}{m}\lambda_{\min}^+(\bar \mA^\top \bar \mA)$ & \Cref{thm:prop_conv} & $5 m + 2 n$ \\ \hline
        Capped
& $1 - \left(\theta\gamma + 1\right)\lambda_{\min}^+(\bar \mA^\top \mP \bar \mA)$ & \cite{BAIWu201821}, \Cref{thm:RGRK} &  $9 m + 2 n$ \\ \hline
    \end{tabular}
}
    \caption{Summary of convergence guarantees and costs of various sampling strategies for the randomized Kaczmarz algorithm. Here, $\gamma= 1/ \underset{i=1,\ldots, m}{\max}\sum_{j=1, j\ne i}^m p_i$ as defined in~\Cref{eqn:gamma1}, $\mP= \diag(p_1,\ldots,p_m)$ is a matrix of arbitrary fixed probabilities, and $\bar \mA \eqdef  \mD_{RK}^{-1}\mA$, with $\mD_{RK} \eqdef \diag\left(\norm{\mA_{1:}}_2,
\ldots,\norm{\mA_{m:}}_2\right)$. Only leading order flop counts are reported. The number of sketches is $q$, the sketch size is $\tau$ and the number of rows and columns in the matrix $\mA$ are $m$ and $n$ respectively.}
    \label{tab:adakacz}
\end{table}

\subsection{Adaptive coordinate descent}\label{subsec:adacd-summary}
By choosing the parameter matrix $\mB=\mA^\top \mA$ and sketching matrices $\mS_i =\mA e_i$ for $i = 1,\ldots,n$ where $e_i \in \R^m$ is the $i\thup$ coordinate vector, we arrive at the coordinate descent method introduced in \Cref{subsec:CD_intro}. In this setting, the number of sketches $q = n$, where $n$ is number of columns in $\mA$, and the sketch size is $\tau = 1$.

Coordinate descent uses fewer flops per iteration than indicated by the general computation given in \Cref{subsec:per_iter_cost}. This computational savings arises from the sparsity of the matrix $\mB^{-1}\mA^\top \mS_{i_k}\mC_{i_k} = e_i / \norm{\mA_{:i}}$. As a result, the iterate update of $x^k$ to $x^{k+1}$ using the sketched residuals $\mR_{i_k}^k$ requires only $O(1)$ flops instead of $2 n$ flops as indicated in the general analysis that is summarized in \Cref{tab:general-shared-costs}. The cost of a coordinate descent update is dominated by the $2n$ flops required to calculate $\mR_{i_k}^k$ by either the auxiliary update of \Cref{ln:efficient-auxiliary-update} or directly via \Cref{def:sketched-res}.

Similar to the randomized Kaczmarz case, we define the diagonal probability matrix $\mP \eqdef  \diag(p_1,\ldots,p_n)$ and the normalized matrix $\widetilde \mA \eqdef \mA \mD_{CD}^{-1}$, with $\mD_{CD}\eqdef \diag\left(\norm{\mA_{:1}}_2,
\ldots,\norm{\mA_{:n}}_2\right)$. The projection matrix $\mZ_i$ as defined in \Cref{eqn:Zk} is the projection given by
\[
\mZ_i = (\mA^\top \mA)^{-1/2}\mA^\top\mA \frac{e_i e_i^\top}{\norm{\mA_{:i}}^2} \mA^\top\mA (\mA^\top \mA)^{-1/2}
= (\mA^\top \mA)^{1/2} \frac{e_i e_i^\top}{\norm{\mA_{:i}}^2}  (\mA^\top \mA)^{1/2}.
\]
We then have 
\[
\EE{i\sim p}{\mZ_i} = (\mA^\top \mA)^{1/2} \mD_{CD}^{-1} \mP \mD_{CD}^{-1}  (\mA^\top \mA)^{1/2}.
\]
Note that $\EE{i\sim p}{\mZ_i} $ is similar to $ \mP \mD_{CD}^{-1}  \mA^\top \mA \mD_{CD}^{-1} = \mP \widetilde \mA^\top \widetilde \mA$ and thus 
\[
\lambda_{\min}^+(\EE{i\sim p}{\mZ_i}) = \lambda_{\min}^+ (\mP \widetilde \mA^\top \widetilde \mA).
\]
The costs and convergence rates for the adaptive sampling strategies discussed in \Cref{sec:methods} applied to coordinate descent are summarized in \Cref{tab:adacd}.

\begin{table}[]
    \centering
    \begin{tabular}{|c|c|c|c|}
    \hline
        \textbf{Sampling} & \makecell{\textbf{Convergence}\\ \textbf{Rate Bound}} & \makecell{\textbf{Rate Bound}\\ \textbf{Shown In}} &
        \makecell{\textbf{Flops Per}\\ \textbf{Iteration}}
        \\        \hline \hline
        Uniform & $1-\frac{1}{n}\lambda_{\min}^+(\widetilde \mA^\top \widetilde \mA)$ & \Cref{thm:fixed} & 
        $2n$ \\ \hline
        $p_i\propto \norm{\mA_{:i}}_2^2$ & $\left(1-\frac{\lambda_{\min}^+(\mA^\top \mA)}{\norm{\mA}_F^2}\right)$ & \cite{leventhal2010randomized} \Cref{thm:fixed} & 
        $2n$  \\ \hline
        Max-distance & $1-\underset{v \in \range{\mA^\top}}{\min} \frac{ \norm{\widetilde\mA v}_\infty}{\norm{v}_2}.$ & \Cref{thm:maxDistConv} & 
        $3n$\\ \hline
        $p_i^k \propto f_i(x^k)$ & $1-\frac{2}{n}\lambda_{\min}^+(\widetilde \mA^\top \widetilde \mA)$ & \Cref{thm:prop_conv} & 
        $5n$ \\ \hline
        Capped & $1 - \left(\theta\gamma + 1\right)\lambda_{\min}^+(\mP\widetilde \mA^\top  \widetilde \mA)$ & \Cref{thm:RGRK} & 
        $9n$ \\ \hline
    \end{tabular}
    \caption{Summary of convergence guarantees and costs of various sampling strategies for adaptive coordinate descent. Here, $\gamma= 1/ \underset{i=1,\ldots, m}{\max}\sum_{j=1, j\ne i}^m p_i$ as defined in~\Cref{eqn:gamma1}, $\mP= \diag(p_1,\ldots,p_m)$ is a matrix of arbitrary fixed probabilities, and $\widetilde \mA = \mA\mD_{CD}^{-1}$, with $\mD_{CD} = \diag\left(\norm{\mA_{:1}}_2,
\ldots,\norm{\mA_{:n}}_2\right)$. Only flop counts of leading order are reported.
}
    \label{tab:adacd}
\end{table}

\section{Experiments}\label{sec:experiments}
We test the performance of various adaptive and non-adaptive sampling strategies in the special sketch-and-project subcases of randomized Kaczmarz and coordinate descent. We report performance via three different metrics: norm-squared error versus iteration, norm-squared error versus approximate flop count, and the worst expected convergence factor.

Results are averaged over 50 trials. For each trial a single matrix $\mA$ is used. For the experiments measuring error, a single true solution $x^*$ and vector $b$ are used. 
To find the worst expected convergence factor, a new exact solution $x^*$ is generated for each trial, since the max-distance method is deterministic and this adds more variation between trials. The exact solutions $x^*$ are generated by 
\begin{equation*}
    x^* = \frac{\mA^\top \omega}{\norm{\mA^\top \omega}_{\mB}},
\end{equation*}
where $\omega\in \R^m$ is a vector of i.i.d. random normal entries. 
 Thus $\norm{x^*}_{\mB}^2 =1$ is normalized with respect to the $\mB$--norm and lies in the row space of $\mA$. The latter condition guarantees that $x^*$ is indeed the unique solution to \Cref{eqn:problin}. 
We measure the error in terms of the $\mB$-norm. Recall that for randomized Kaczmarz $\mB =\mI$ , while for coordinate descent, $\mB = \mA^\top \mA$. The sketch-and-project methods are implemented using the auxiliary update \Cref{ln:efficient-auxiliary-update} as detailed in \Cref{alg:adasketch}.


 We consider synthetic matrices of size $1000\times100$ and $100 \times 1000$ that are generated with i.i.d. standard Gaussian entries.  
We additionally test the various adaptive sampling strategies on two large-scale matrices arising from real world problems. These matrices are available via the SuiteSparse Matrix Collection \cite{suitesparse}. The first system (Ash958) is an overdetermined matrix with 958 rows, 292 columns, and 1916 entries \cite{duff1989sparse,duff1992users}. The matrix comes from a survey of the United Kingdom and is part of the original Harwell sparse matrix test collection. 
The second real matrix we consider is the GEMAT1 matrix, which arises from optimal power flow modeling. This matrix is highly underdetermined and consists of 4929 rows, 10,595 columns, and 47,369 entries \cite{duff1989sparse,duff1992users}.

\subsection{Error per iteration}
We first investigate the convergence of the squared norm of the error, $\norm{x^k - x^*}_{\mB}^2$ in terms of the number of iterations, see~\Cref{fig:synth-err-vs-iter}. 
The first row of subfigures (\Cref{subfig:synthetic_errsq-vs-iter-100x1000-rk,subfig:synthetic_errsq-vs-iter-1000x100-rk}) shows convergence for randomized Kaczmarz, while the second row of subfigures (\Cref{subfig:synthetic_errsq-vs-iter-100x1000-cd,subfig:synthetic_errsq-vs-iter-1000x100-cd}) gives the convergence of various sampling strategies for coordinate descent. The first column of subfigures (\Cref{subfig:synthetic_errsq-vs-iter-100x1000-rk,subfig:synthetic_errsq-vs-iter-100x1000-cd}) uses an underdetermined system of $100\times 1000$ while the second column of subfigures (\Cref{subfig:synthetic_errsq-vs-iter-1000x100-rk,subfig:synthetic_errsq-vs-iter-1000x100-cd}) considers an overdetermined system of $1000\times 100$. \Cref{ash958_flops-vs-iter-958x292-rk,ash958_flops-vs-iter-958x292-cd} demonstrate convergence per iteration for the Ash958 matrix and \Cref{gemat1_errsq-vs-iter-4929x10595-rk,gemat1_errsq-vs-iter-4929x10595-cd} for randomized Kaczmarz and coordinate descent applied to the GEMAT1 matrix.

As expected, we see that the max-distance sampling strategy performs the best per iteration followed by the capped adaptive strategy, then sampling proportional to the sketched residuals and finally followed by the uniform strategy. For randomized Kaczmarz applied to underdetermined systems and coordinate descent applied to overdetermined systems, max-distance and the capped adaptive sampling strategies perform similarly in terms of squared error per iteration. The convergence of randomized Kaczmarz for each sampling strategy applied to overdetermined systems is very similar to that of coordinate descent applied to underdetermined systems. Similarly, the convergence of randomized Kaczmarz for each sampling strategy applied to underdetermined systems is very similar to that of coordinate descent applied to overdetermined systems. 
For the large and underdetermined GEMAT1 matrix, we find that randoimized coordinate descent methods have much larger variance in their performance compared to randomized Kaczmarz methods.

\begin{figure}[h]
    \begin{subfigure}{0.48\textwidth}
        \includegraphics[width=\textwidth]{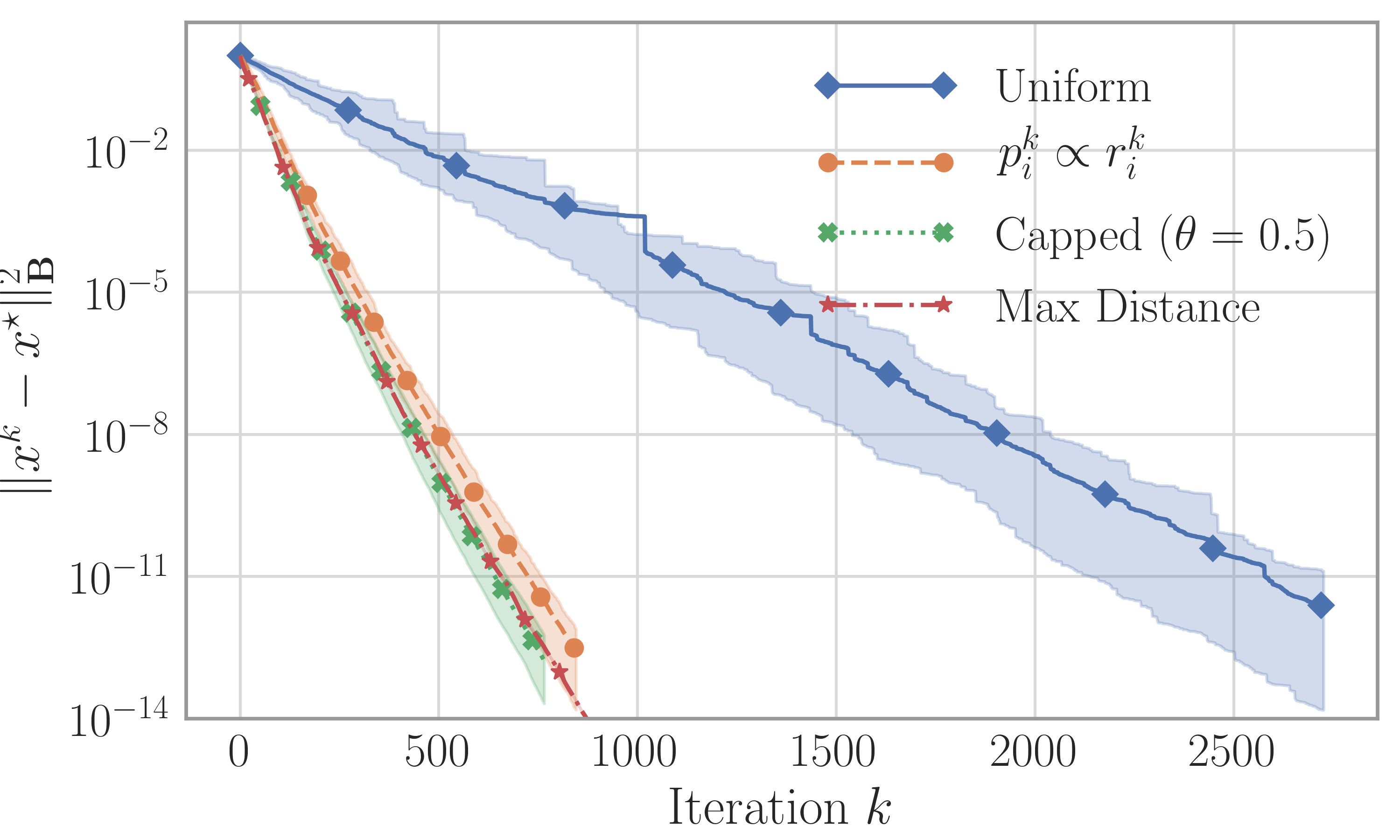}
        \caption{Adaptive randomized Kaczmarz, $\mA \in \R^{100 \times 1000}$.}
        \label{subfig:synthetic_errsq-vs-iter-100x1000-rk}
    \end{subfigure}
    \hspace{0.01\textwidth} 
    \begin{subfigure}{0.48\textwidth}
        \includegraphics[width=\textwidth]{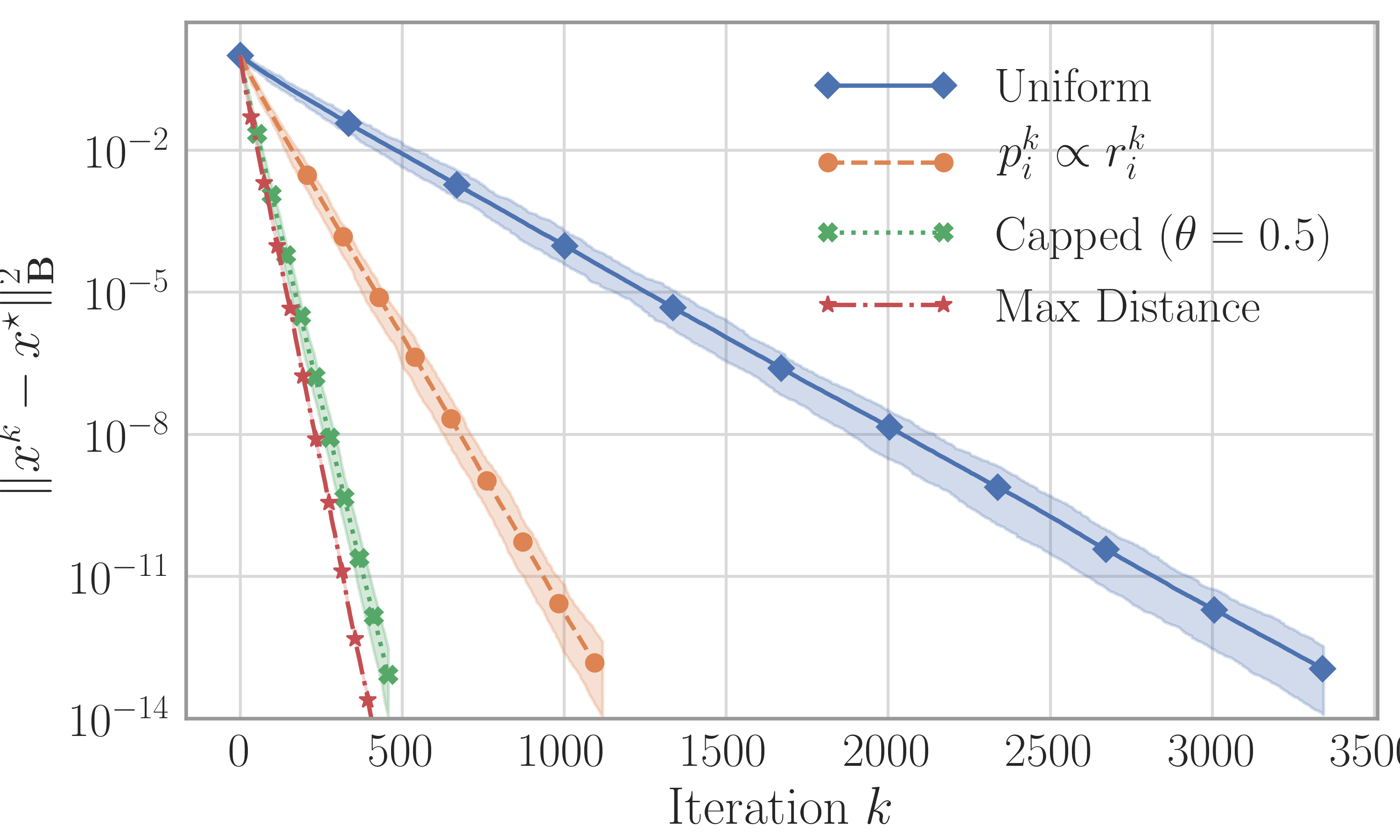}
        \caption{Adaptive randomized Kaczmarz, $\mA \in \R^{1000 \times 100}$.}
        \label{subfig:synthetic_errsq-vs-iter-1000x100-rk}
    \end{subfigure}
    \begin{subfigure}{0.48\textwidth}
        \includegraphics[width=\textwidth]{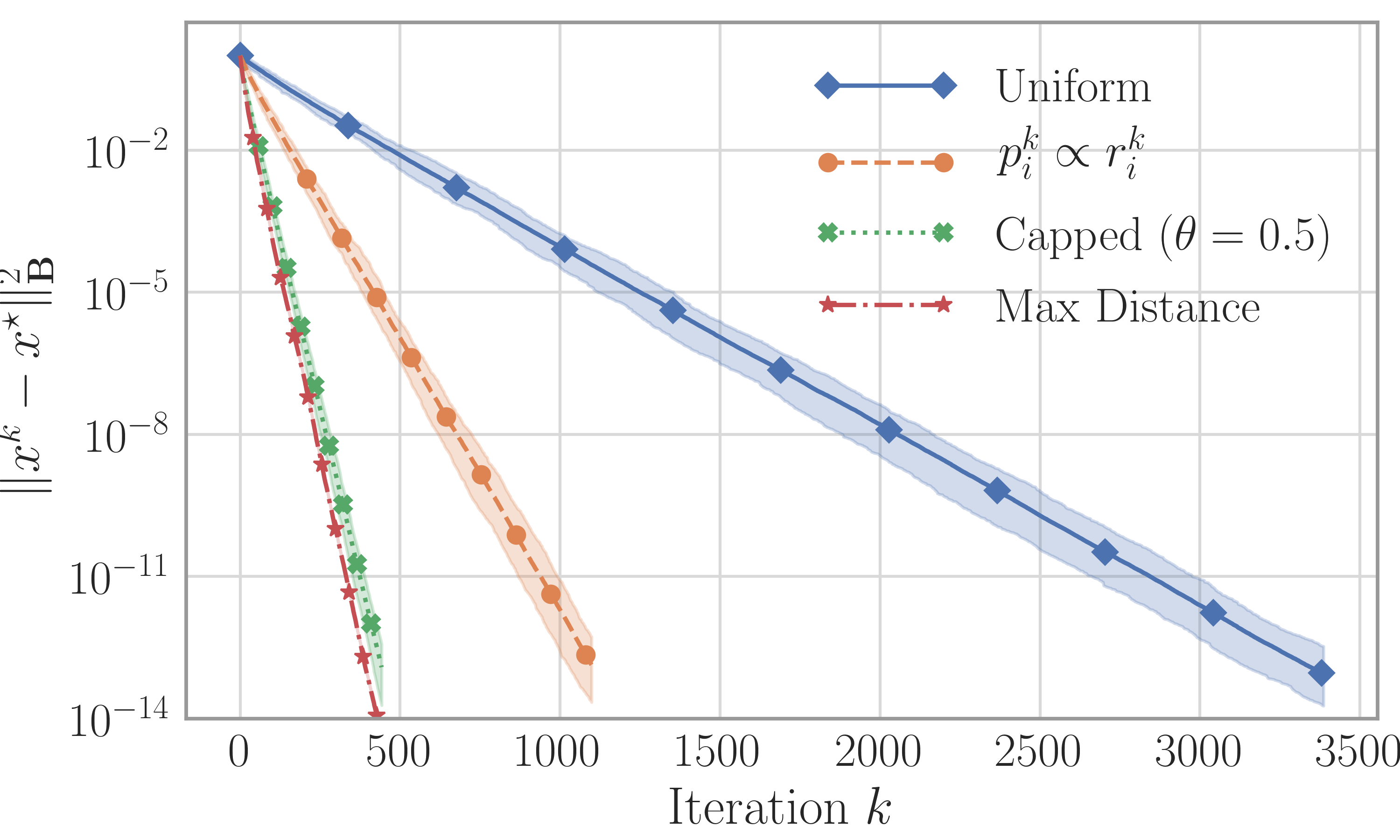}
        \caption{Adaptive coordinate descent, $\mA \in \R^{100 \times 1000}$.}
        \label{subfig:synthetic_errsq-vs-iter-100x1000-cd}
    \end{subfigure}
    \hspace{0.01\textwidth}
    \begin{subfigure}{0.48\textwidth}
        \includegraphics[width=\textwidth]{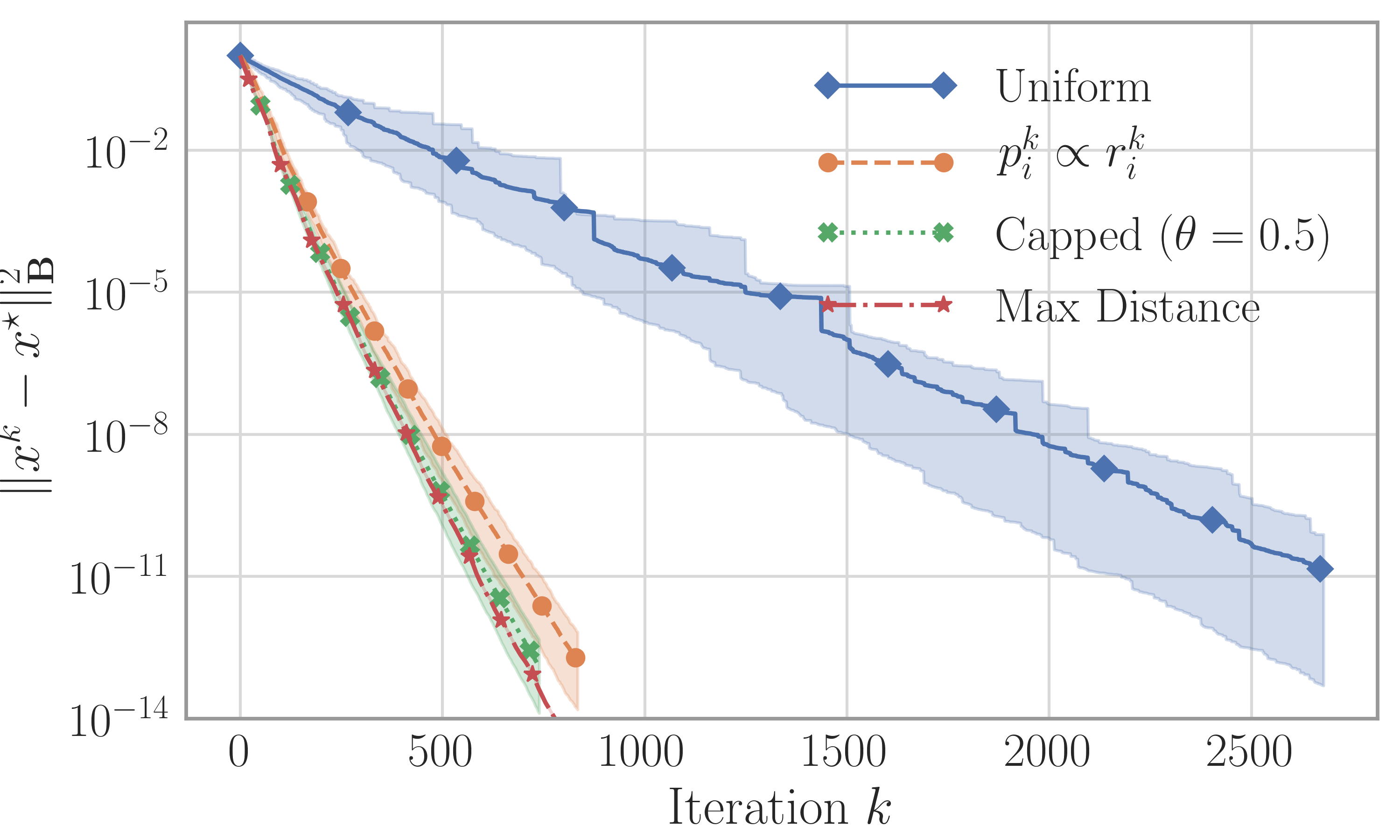}
        \caption{Adaptive coordinate descent, $\mA \in \R^{1000 \times 100}$.}
        \label{subfig:synthetic_errsq-vs-iter-1000x100-cd}
    \end{subfigure}
\centering
\caption{A comparison between different selection strategies for randomized Kaczmarz and coordinate descent methods. Squared error norms were averaged over 50 trials. Confidence intervals indicate the middle 95\% performance. Subplots on the left show convergence for underdetermined systems, while those on the right show the convergence on an overdetermined systems.}
\label{fig:synth-err-vs-iter}
\end{figure}

\subsection{Error versus approximate flops required}
If we take into account the number of flops required for each method, the relative performance of the methods changes significantly. In order to approximate the number of flops required for each sampling strategy, we use the leading order flop counts per iteration given in \Cref{tab:adakacz,tab:adacd}. We do not consider the precomputational costs, but only the costs incurred at each iteration. The performance in terms of flops of each sampling strategy is reported in \Cref{fig:synth-err-vs-flops}. Performance on the Ash958 matrix is reported in \Cref{ash958_flops-vs-iter-958x292-rk,ash958_flops-vs-iter-958x292-cd}. Performance on the GEMAT1 matrix for randomized Kaczmarz and coordinate descent is reported in \Cref{gemat1_flops-vs-iter-4929x10595-rk,gemat1_flops-vs-iter-4929x10595-cd}.

As discussed in \Cref{sec:imp_tricks}, the adaptive methods are typically more expensive than non-adaptive methods as one must update the sketched residuals $\mR_i^k$ for $i=1,\ldots,q$ at each iteration $k$. Yet even after taking flops into consideration, we find that the max-distance sampling strategy still performs the best overall. For randomized Kaczmarz applied to an overdetermined synthetic matrix, uniform sampling performance is comparable to max-distance (\Cref{synthetic_flops-vs-iter-1000x100-rk}). In all other experiments, however, max-distance sampling is the clear winner. Since max-distance sampling performs at least as well per iteration as capped adaptive sampling and sampling with probabilities proportional to the sketched losses, yet the max-distance sampling method is less expensive, it naturally performs the best among the adaptive methods when flop counts are considered.

\begin{figure}
    \begin{subfigure}{0.48\textwidth}
        \includegraphics[width=\textwidth]{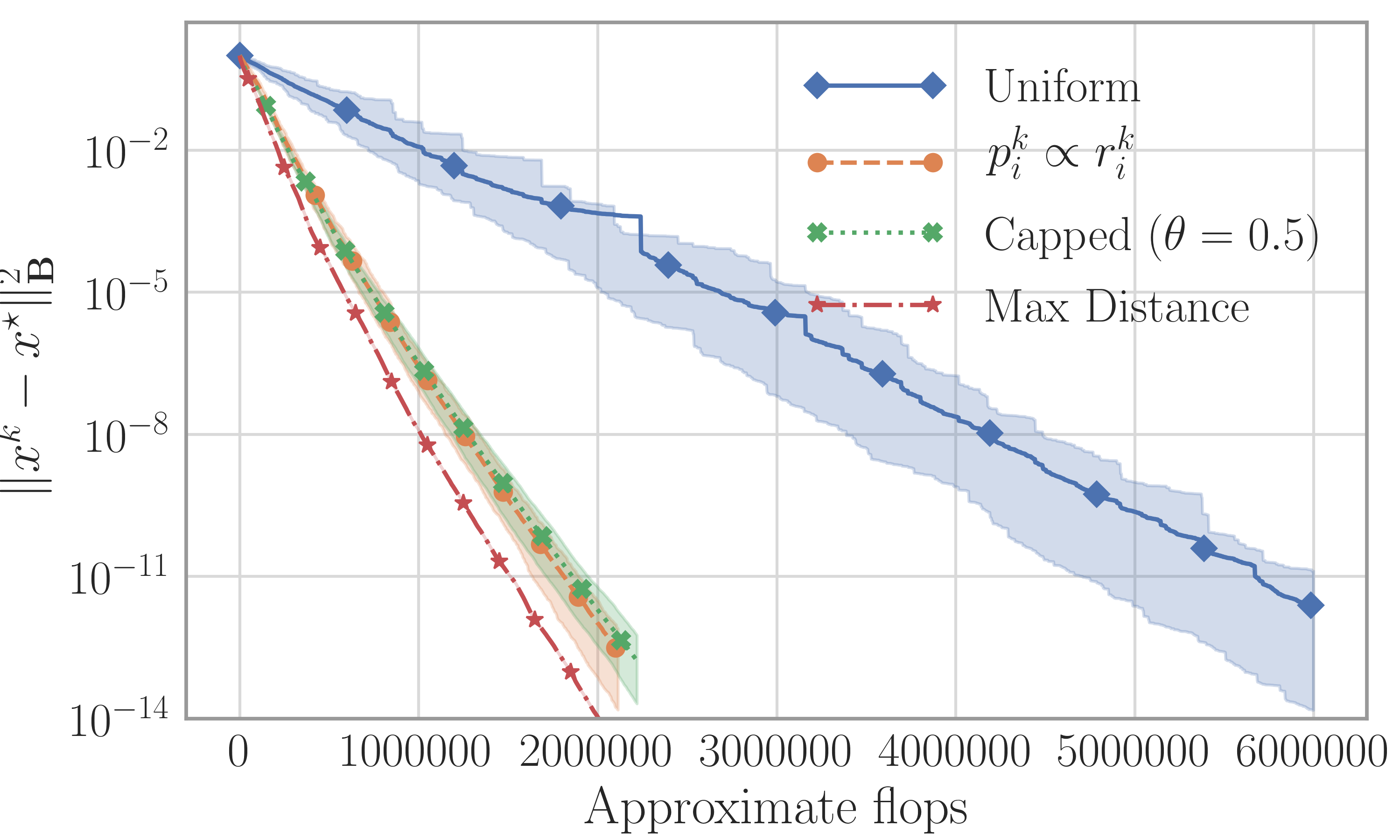}
        \caption{Adaptive randomized Kaczmarz, $\mA \in \R^{100 \times 1000}$.}
        \label{synthetic_flops-vs-iter-100x1000-rk}
    \end{subfigure}
    \hspace{0.01\textwidth} 
    \begin{subfigure}{0.48\textwidth}
        \includegraphics[width=\textwidth]{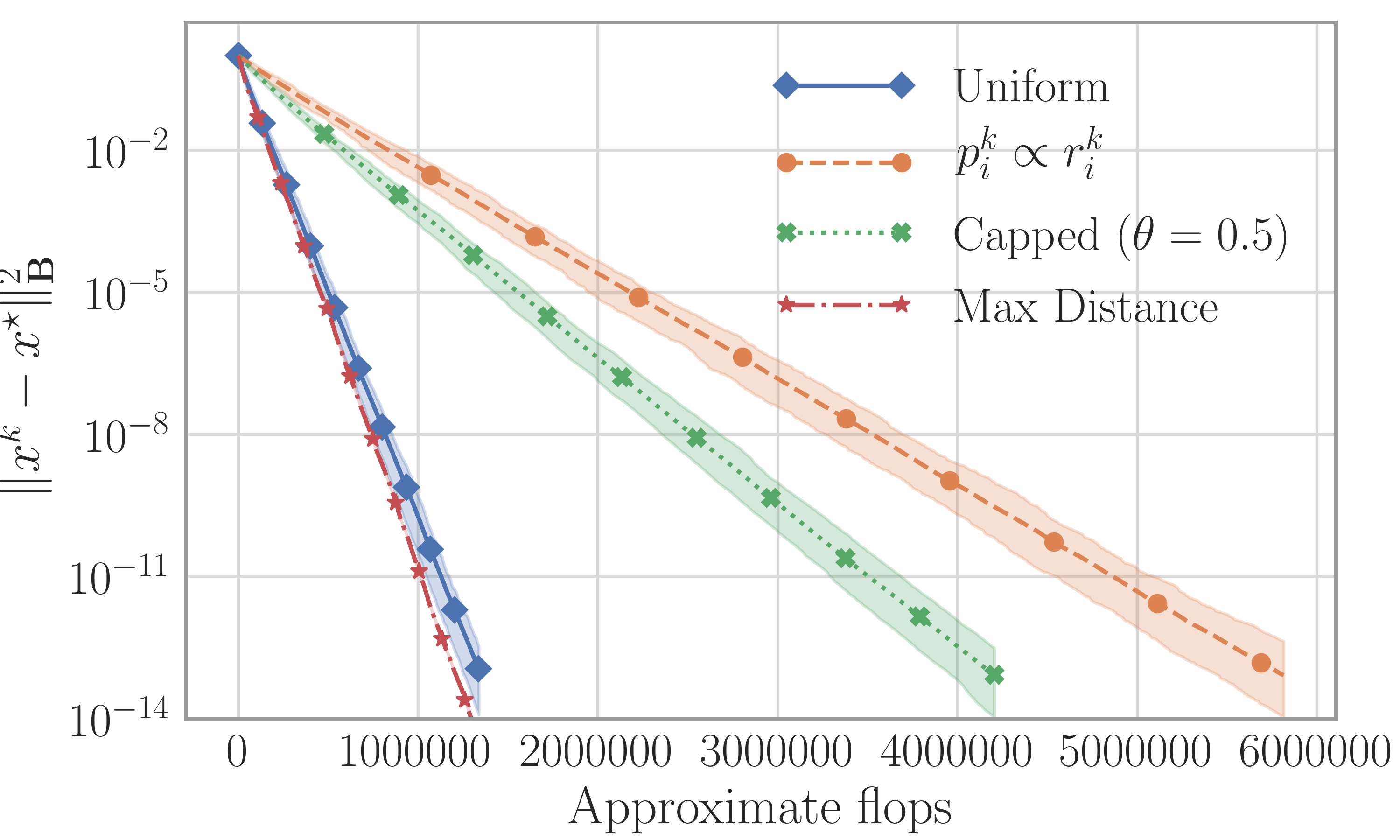}
        \caption{Adaptive randomized Kaczmarz, $\mA \in \R^{1000 \times 100}$.}
        \label{synthetic_flops-vs-iter-1000x100-rk}
    \end{subfigure}
    \begin{subfigure}{0.48\textwidth}
        \includegraphics[width=\textwidth]{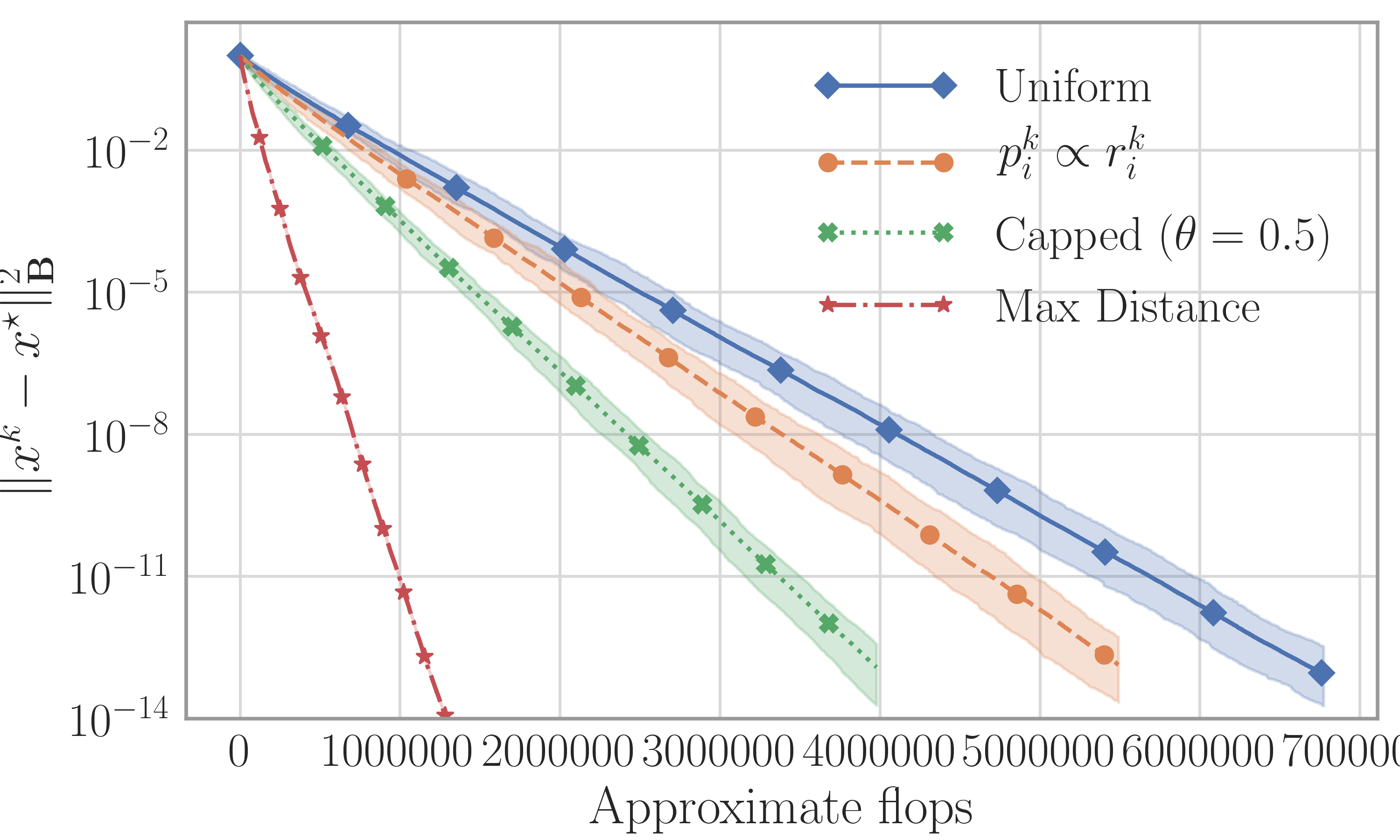}
        \caption{Adaptive coordinate descent, $\mA \in \R^{100 \times 1000}$.}
        \label{synthetic_flops-vs-iter-100x1000-cd}
    \end{subfigure}
    \hspace{0.01\textwidth}
    \begin{subfigure}{0.48\textwidth}
        \includegraphics[width=\textwidth]{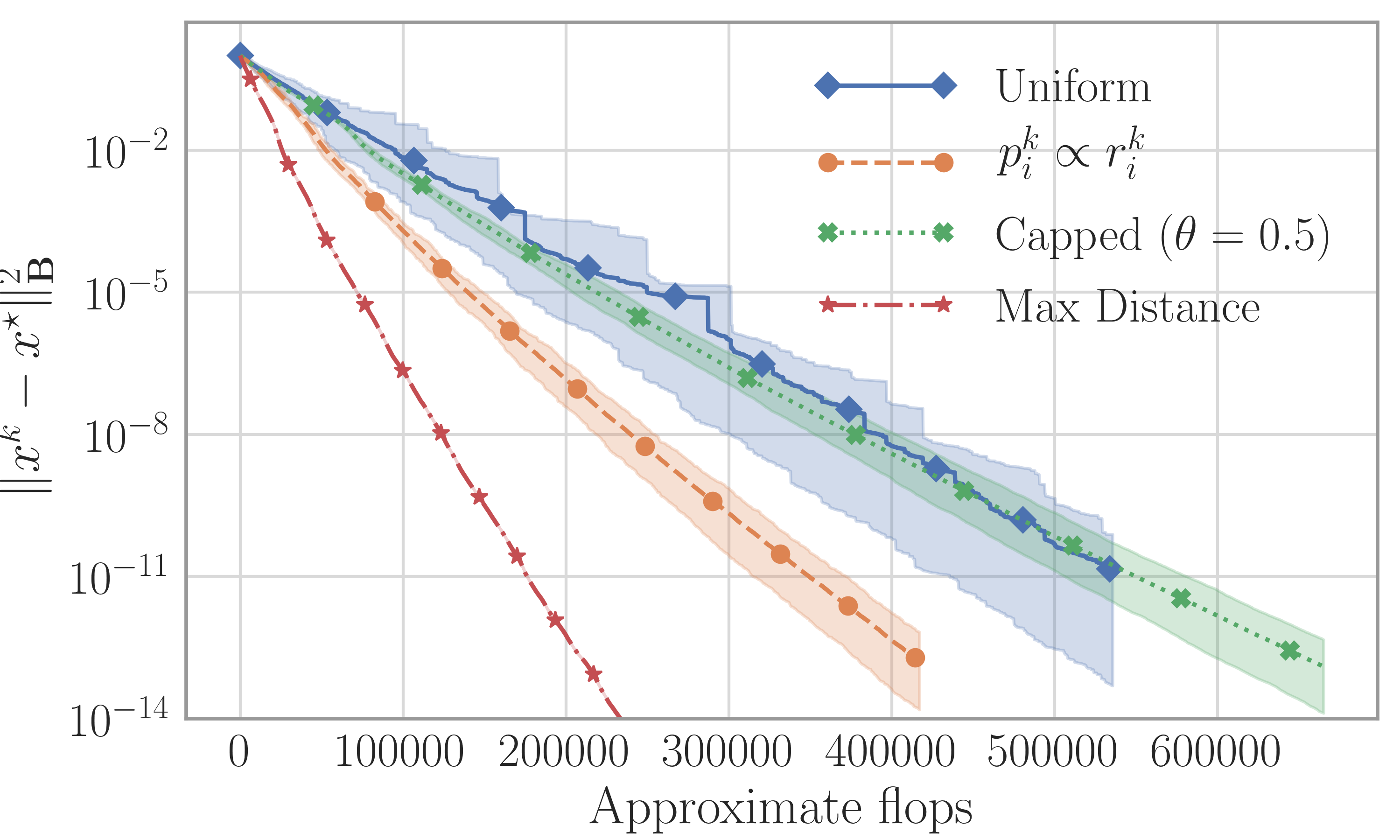}
        \caption{Adaptive coordinate descent, $\mA \in \R^{1000 \times 100}$.}
        \label{synthetic_flops-vs-iter-1000x100-cd}
    \end{subfigure}
\centering
\caption{A comparison between different selection strategies for randomized Kaczmarz and coordinate descent methods. Squared error norms were averaged over 50 trials and are plotted against the approximate flops aggregated over the computations that occur at each iteration. Confidence intervals indicate the middle 95\% performance. Subplots on the left show convergence for underdetermined systems, while those on the right show the convergence on an overdetermined systems.}
\label{fig:synth-err-vs-flops}
\end{figure}

\begin{figure}
    \begin{subfigure}{0.48\textwidth}
        \includegraphics[width=\textwidth]{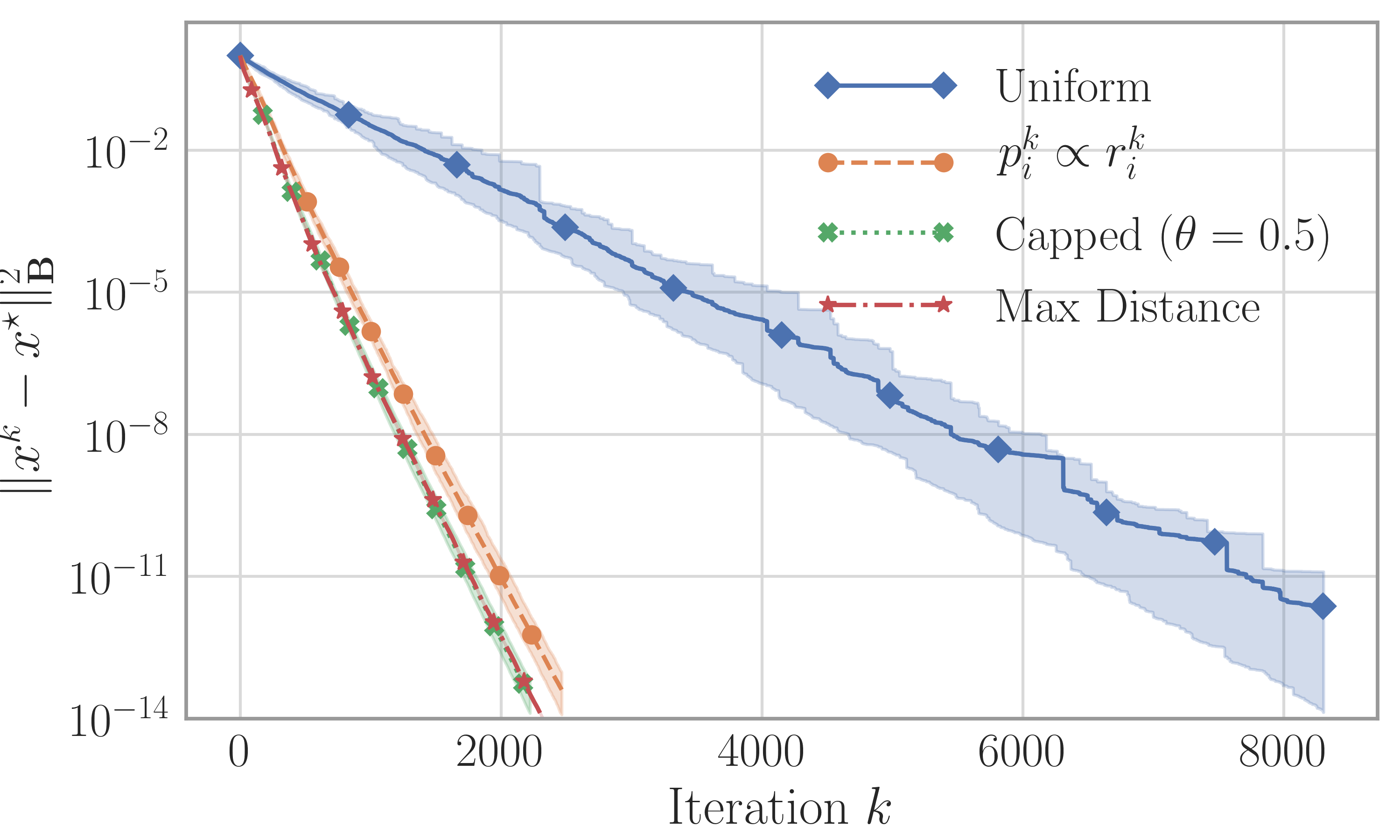}
        \caption{Adaptive coordinate descent.}
        \label{ash958_errsq-vs-iter-958x292-cd}
    \end{subfigure}
    \hspace{0.01\textwidth}
    \begin{subfigure}{0.48\textwidth}
        \includegraphics[width=\textwidth]{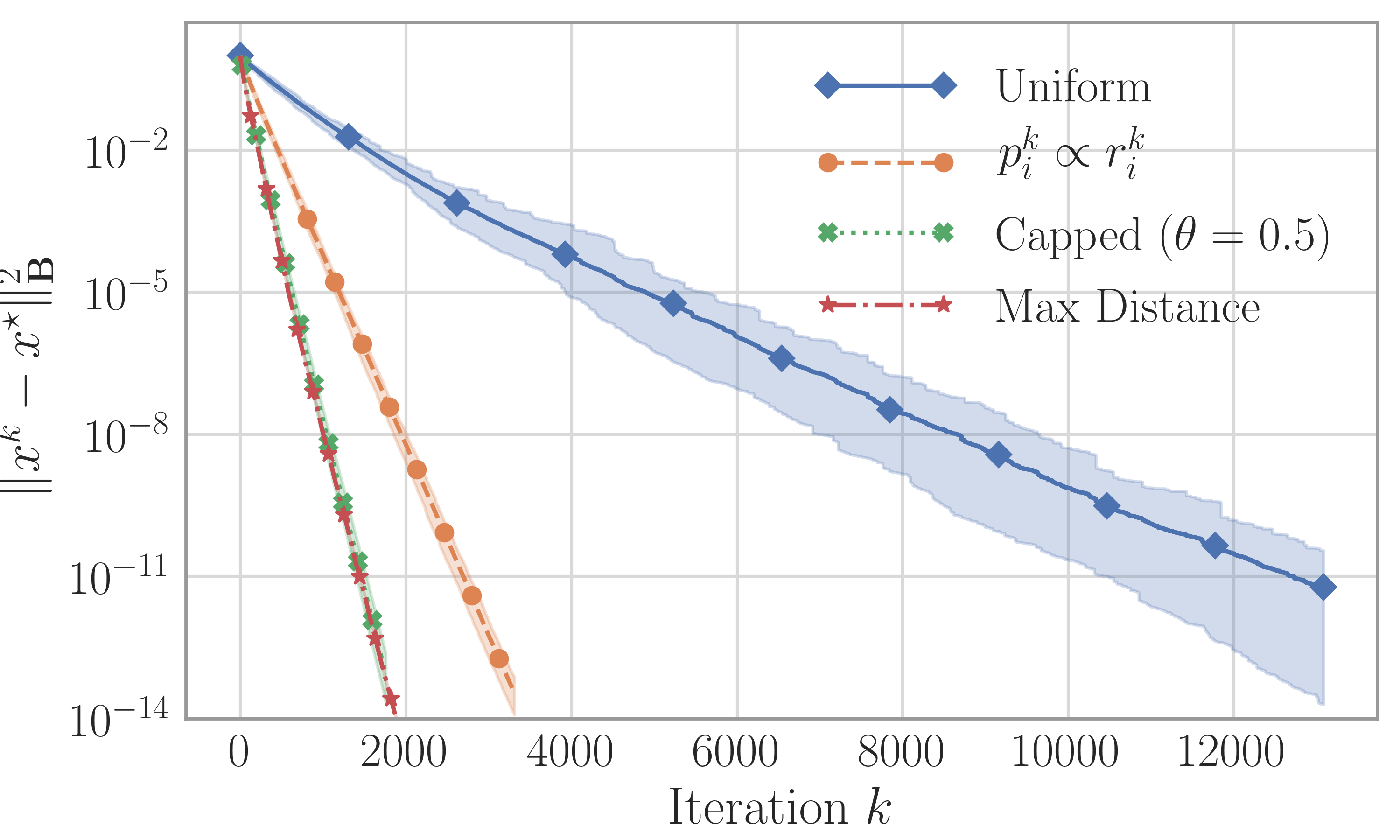}
        \caption{Adaptive randomized Kaczmarz.}
        \label{ash958_errsq-vs-iter-958x292-rk}
    \end{subfigure}
    \begin{subfigure}{0.48\textwidth}
        \includegraphics[width=\textwidth]{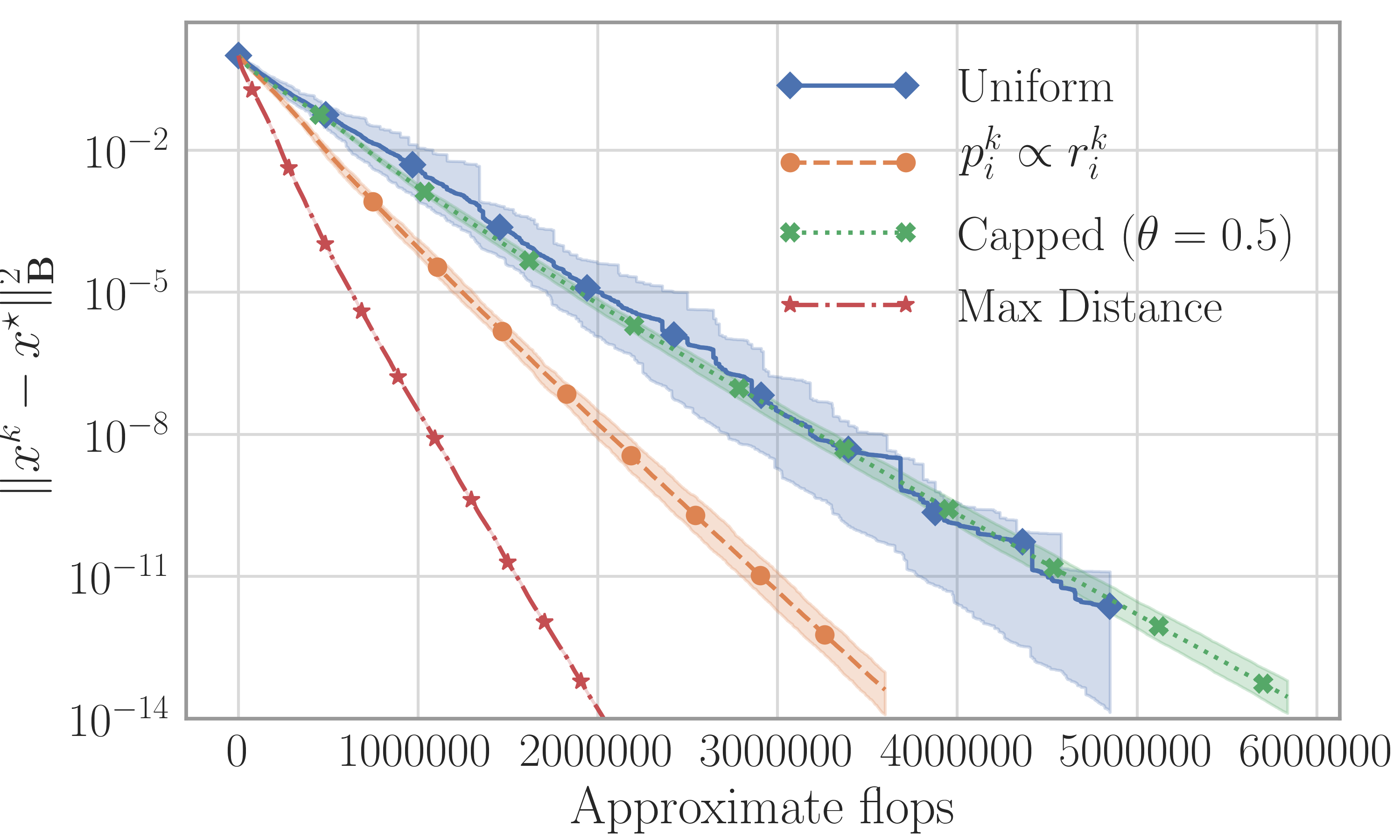}
        \caption{Adaptive coordinate descent.}
        \label{ash958_flops-vs-iter-958x292-cd}
    \end{subfigure}
    \hspace{0.01\textwidth}
    \begin{subfigure}{0.48\textwidth}
        \includegraphics[width=\textwidth]{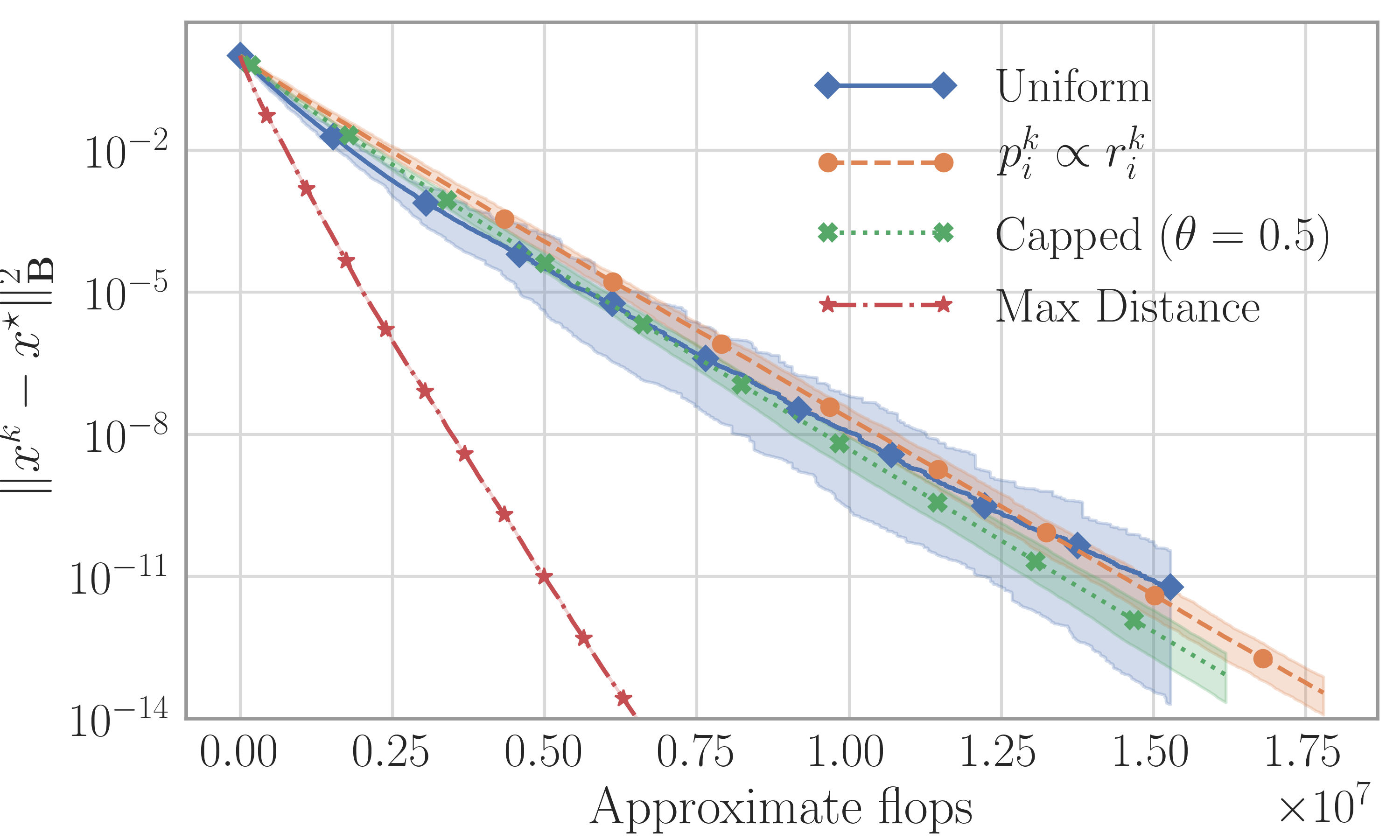}
        \caption{Adaptive randomized kaczmarz.}
        \label{ash958_flops-vs-iter-958x292-rk}
    \end{subfigure}
\centering
\caption{A comparison between different selection strategies for randomized Kaczmarz and coordinate descent methods on the Ash958 matrix. Squared error norms were averaged over 50 trials and plotted against both the iteration and the approximate flops required. Confidence intervals indicate the middle 95\% performance. }
\label{fig:ash_perf}
\end{figure}

\begin{figure}
    \begin{subfigure}{0.48\textwidth}
        \includegraphics[width=\textwidth]{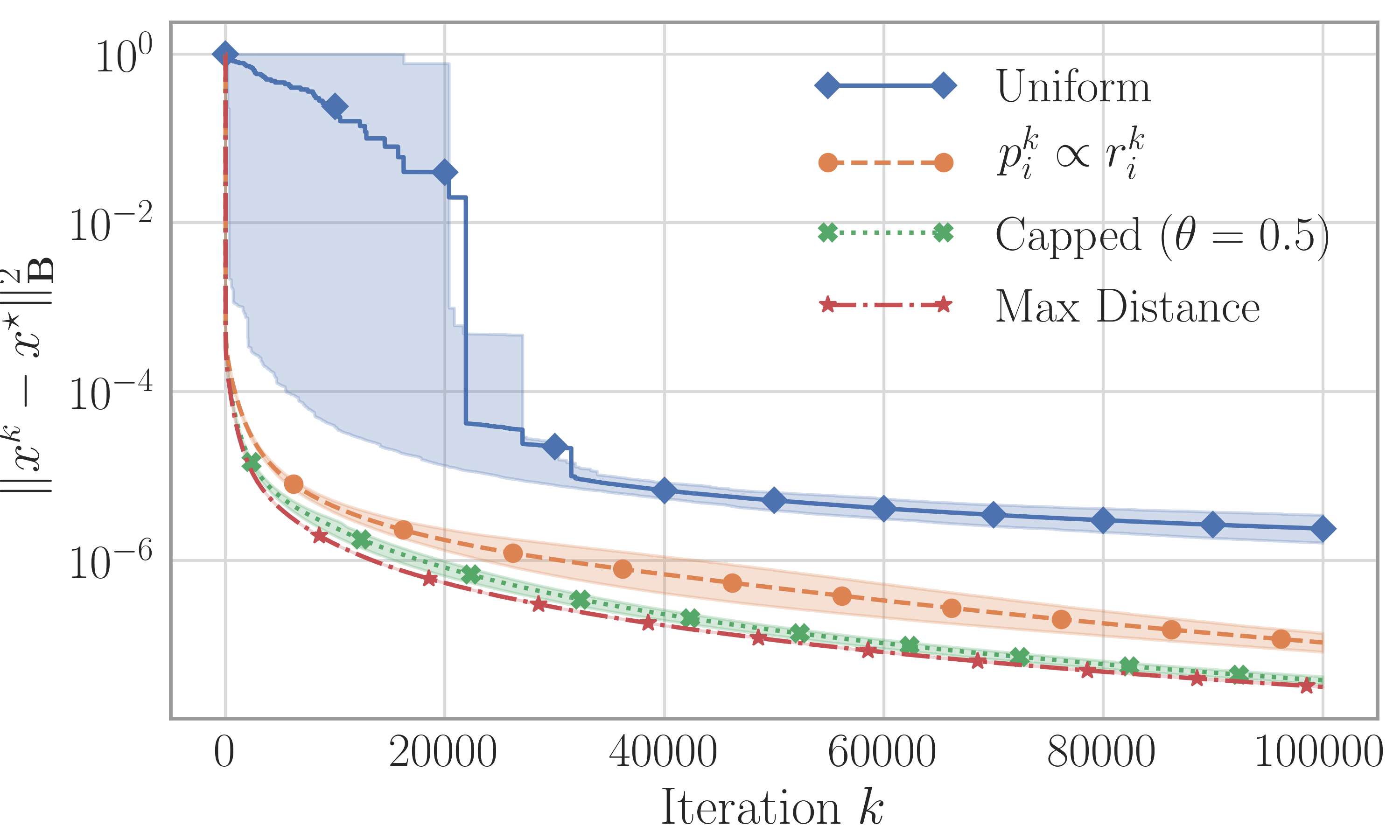}
        \caption{Adaptive randomized Kaczmarz.}
        \label{gemat1_errsq-vs-iter-4929x10595-rk}
    \end{subfigure}
    \begin{subfigure}{0.48\textwidth}
        \includegraphics[width=\textwidth]{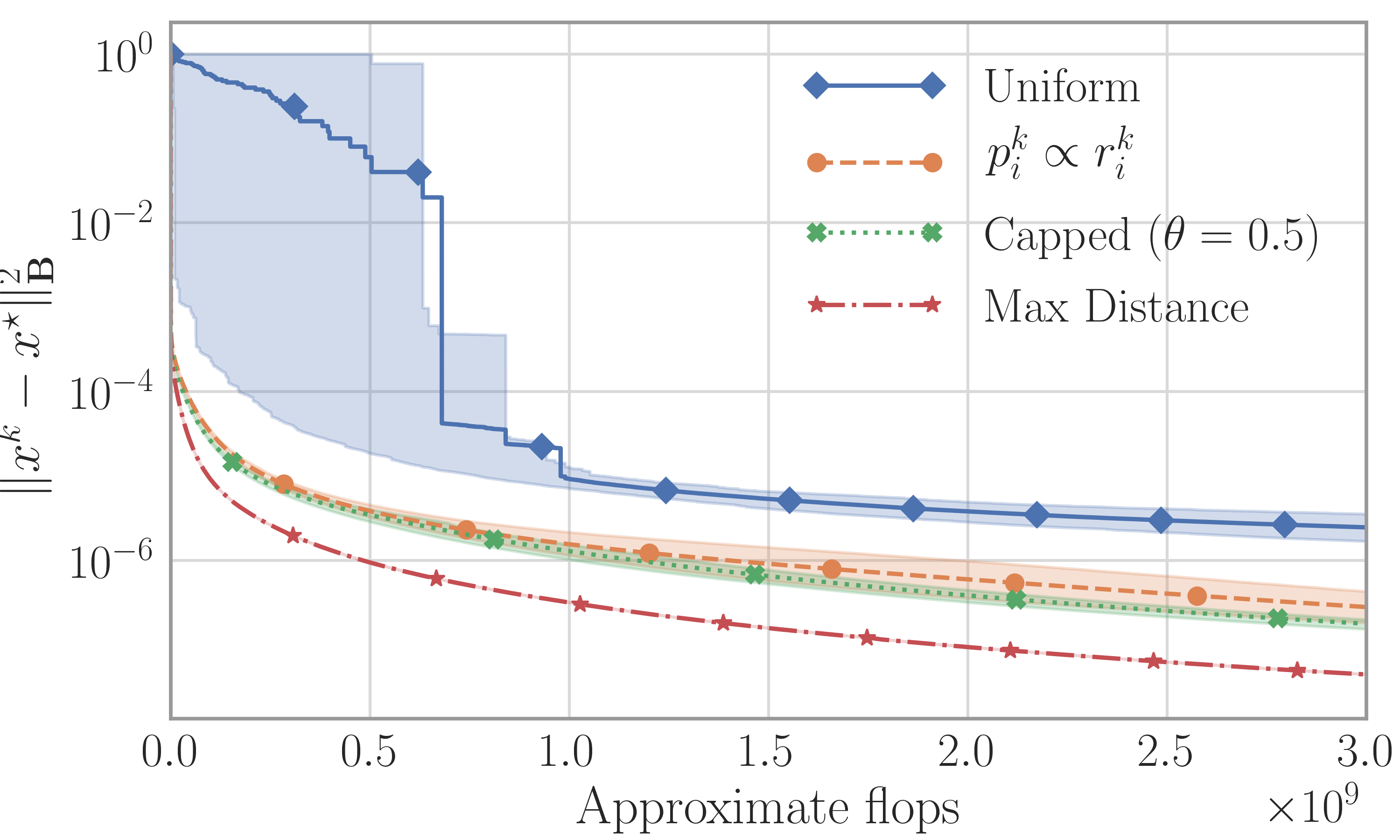}
        \caption{Adaptive randomized Kaczmarz.}
        \label{gemat1_flops-vs-iter-4929x10595-rk}
    \end{subfigure}
    \begin{subfigure}{0.48\textwidth}
        \includegraphics[width=\textwidth]{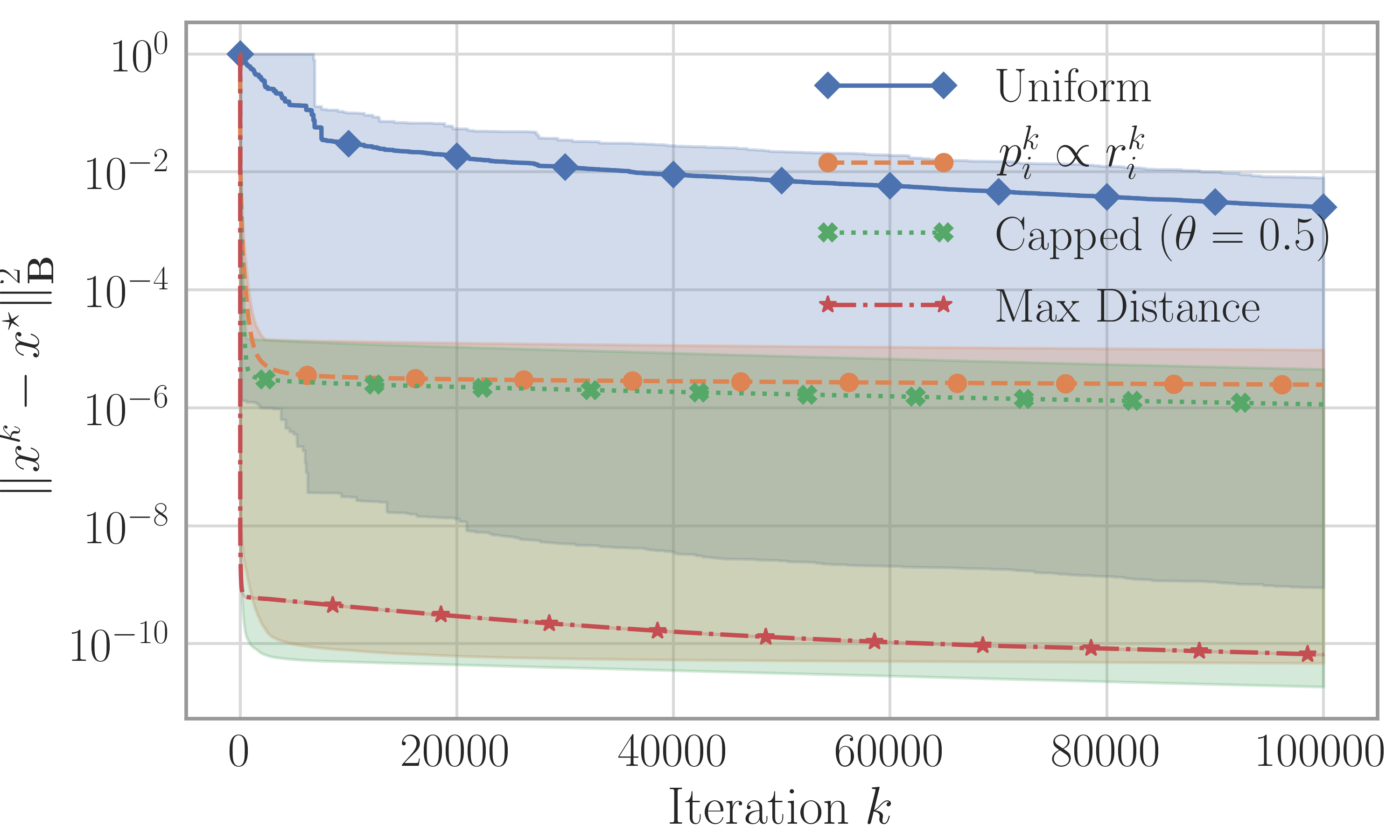}
        \caption{Adaptive coordinate descent.}
        \label{gemat1_errsq-vs-iter-4929x10595-cd}
    \end{subfigure}
    \begin{subfigure}{0.48\textwidth}
        \includegraphics[width=\textwidth]{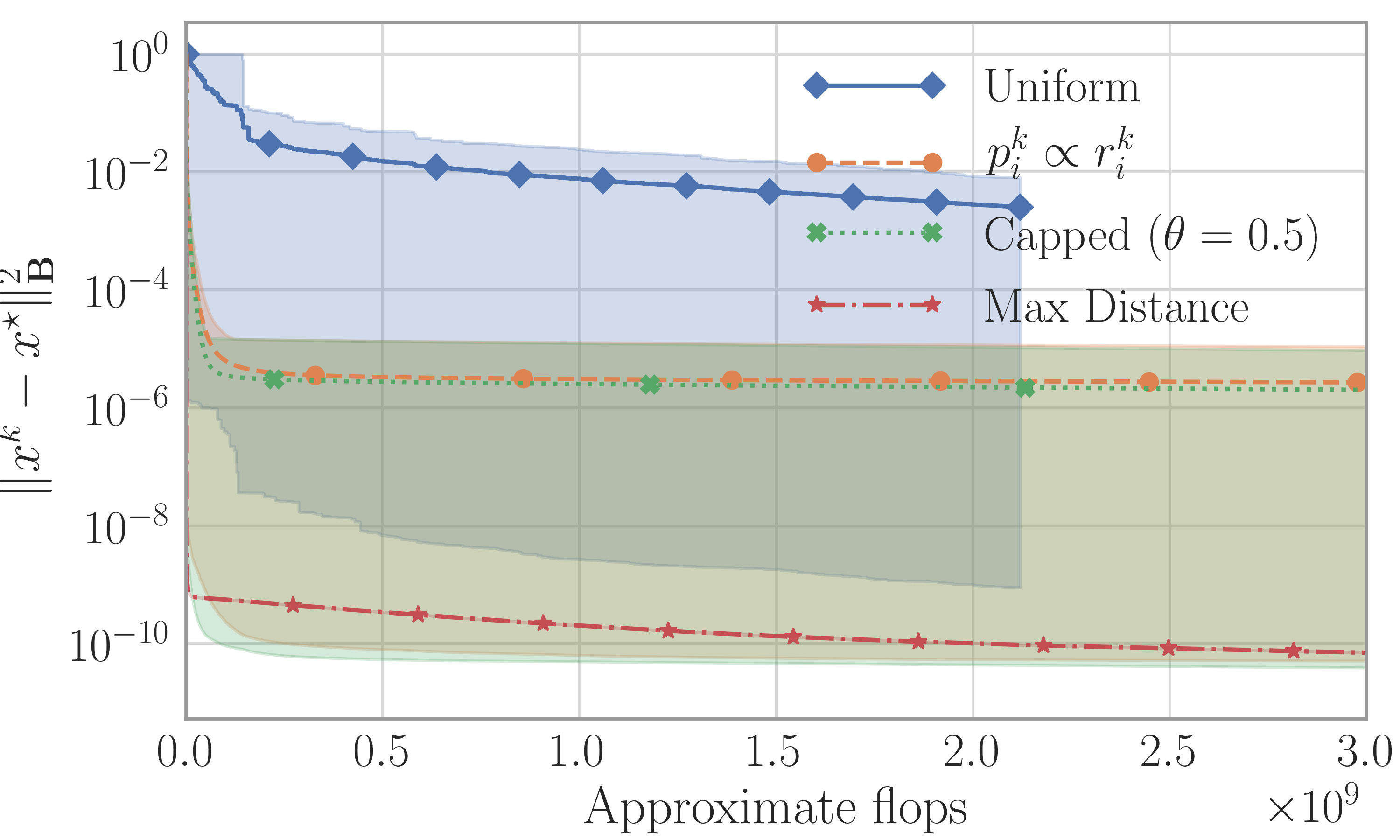}
        \caption{Adaptive coordinate descent.}
        \label{gemat1_flops-vs-iter-4929x10595-cd}
    \end{subfigure}
\centering
\caption{A comparison between different selection strategies for randomized Kaczmarz and coordinate descent on the GEMAT1 matrix. Squared error norms were averaged over 50 trials and plotted against both the iteration and the approximate flops required. Confidence intervals indicate the middle 95\% performance. }
\label{fig:gemat_perf}
\end{figure}

\subsection{Spectral constant estimates}

\Cref{thm:fixed,thm:prop_conv,thm:maxDistBetterThanUnif,thm:maxDistConv,thm:Bai,thm:RGRK} of \Cref{sec:convergence} provide conservative views of the convergence rates of each method, as the spectral constants of \Cref{defn:sigmas} give the expected convergence corresponding to the worst possible point $x\in \range{\mB^{-1}\mA}$ as opposed to the iterates $x^k$. In practice, the convergence at each iteration might perform better than the convergence bounds indicate.

Recall that the convergence rates derived in \Cref{sec:convergence} are given in terms of spectral constants (\Cref{defn:sigmas}) of the form 
\begin{equation*}
\sigma_{p}^2(\mB,\mS) \eqdef  
 \min_{x \in \range{\mB^{-1}\mA^\top}} \frac{\EE{i\sim p}{f_i(x)}}{\norm{x-x^\star}^2_{\mB}}.
\end{equation*}
We will refer to the value
\begin{equation*}
\frac{\EE{i\sim p^k}{f_i(x^k)}}{\norm{x^k-x^\star}^2_{\mB}}
\end{equation*}
as the \emph{expected step size factor} and note that larger values indicate superior performance. 

The smallest expected step size factor observed for each method provides an estimate and upper bound on the spectral constants in the derived convergence rates. 
The minimal expected step size factor for each sampling method applied to random Gaussian matrices of size $1000\times 100$ and $100\times 1000$ are reported in \Cref{tab:min_exp_step}. As expected, we find that these values increase from uniform sampling, sampling proportional to the sketched losses, capped adaptive sampling and finally max-distance selection. In \Cref{thm:prop_conv}, we proved a bound on the convergence rate for sampling proportional to the sketched losses that was twice as fast as the convergence guarantee for uniform sampling. We find that the estimated spectral constants in \Cref{tab:min_exp_step} for the proportional sampling strategy is also at least twice as large as the estimated spectral constant for uniform sampling.

\begin{table}[]
    \centering
    \begin{tabular}{|c|c|c|c|c|}
    \hline
        \multirow{2}{*}{\textbf{Sampling}} &
         \multicolumn{2}{|c|}{\textbf{Randomized Kaczmarz}}  &  \multicolumn{2}{|c|}{\textbf{Coordinate Descent}}\\
         \cline{2-5}
         & $1000\times 100$ & $100\times 1000$ & $1000\times 100$ & $100\times 1000$
        \\        \hline \hline
        Uniform & 0.00705 & 0.00667 & 0.00656 &  0.00715 \\ \hline
        $p_i\propto \norm{\mA_{:i}}_2^2$ &  0.02019 & 0.01569 &  0.01722 &  0.02014 \\ \hline
        Capped & 0.03885 & 0.01901 & 0.01952 & 0.03878
        \\ \hline
        Max-distance & 0.04593 & 0.01994 & 0.02171 &  0.04711\\ \hline
    \end{tabular}
    \caption{Minimal expected step size factor for each sampling method applied to matrices containing i.i.d.\ guassian entries.}
    \label{tab:min_exp_step}
\end{table}

\section{Conclusions}

We extend adaptive sampling methods to the general sketch-and-project setting. We present a computationally efficient method for implementing the adaptive sampling strategies using an auxiliary update. For several specific adaptive sampling strategies including max-distance selection, the capped adaptive sampling of \cite{BaiWuSISC2018,BAIWu201821}, and sampling proportional to the sketched residuals, we derive convergence rates and show that the greedy max-distance sampling rule has the fastest convergence guarantee among the sampling methods considered. This superior performance is seen in practice as well for both the randomized Kaczmarz and coordinate descent subcases.

\appendix

\section{Implementation tricks and computational complexity}\label{sec:imp_tricks_full}

We describe how one can perform adaptive sketching with the same order of cost per iteration as the standard non-adaptive sketch-and-project method when $\tau q$, the number of sketches $q$ times the sketch size $\tau$, is not significantly larger than the number of columns $n$. In particular, we show how adaptive sketching methods can be performed for a per-iteration cost of $O(\tau^2 q + \tau n)$, whereas the standard non-adaptive sketch-and-project method has a per-iteration cost of $O( \tau n)$. 
The precomputations and efficient update strategies presented here are a generalization of those suggested in \cite{BaiWuSISC2018} for the Kaczmarz setting. The computational costs given in this section may be over-estimates of the costs required for specific sketch choices such as when the update is sparse, as is the case in coordinate descent. The special cases of adaptive Kaczmarz and adaptive coordinate descent are analyzed in \Cref{sec:costs-and-convergence-summaries}.

Pseudocode for efficient implementation is provided in \Cref{alg:adasketch}. Throughout this section, we will frequently omit $O(1)$ and $O(\log(q))$ flop counts since they are insignificant compared to the number of rows $m$, the number of columns $n$, and the number of sketches $q$.

\subsection{Per-iteration cost}\label{subsec:per_iter_cost}
The main computational costs of adaptive sketch-and-project (\Cref{alg:adaSKep}) at each iteration come from computing the sketched losses $f_i(x^k)$ of \Cref{eqn:residualk} and updating the iterate from $x^k$ to $x^{k+1}$ via \Cref{eqn:xupdate}. We now discuss how these steps can be calculated efficiently. A suggested efficient implementation for adaptive sketch-and-project is provided in \Cref{alg:adasketch}. The costs of each step of an iteration of the adaptive sketch-and-project method are summarized in \Cref{tab:general-shared-costs}.

Let $\mC_i$ be any square matrix satisfying
\begin{equation}\label{def:C_i}
    \mC_i \mC_i^\top = (\mS_{i}^\top \mA \mB^{-1} \mA^\top \mS_{i})^\dagger.
\end{equation}
For example, $\mC_i$ could be the Cholesky decomposition of $(\mS_{i}^\top \mA \mB^{-1} \mA^\top \mS_{i})^\dagger$. The sketched loss $f_i(x^k)$ and the iterate update from $x^k$ to $x^{k+1}$ can now be written as
\[f_i(x^{k}) = \norm{\mS_i^\top( \mA x^k - b)}^2_{\mC_i \mC_i^\top} = \norm{\mC_i^\top \mS_i^\top( \mA x^k - b)}^2_2\]
and
\[x^{k+1} = x^k - \mB^{-1}\mA^\top \mS_{i_k} \mC_{i_k} \mC_{i_k}^\top \mS_{i_k}^\top(\mA x^k-b).\]
Notice that both the iterate update for $x^k$ and the formula for the sketched loss $f_i(x^k)$ share the sketched residual  $\mR_i^k\eqdef  \mC_i^\top \mS_i^\top( \mA x^k - b)$ defined in \Cref{def:sketched-res}. 
In adaptive methods one must compute the sketched residual $\mR_i^k$ for $i=1, 2, \ldots, q$. When sampling from a fixed distribution, however, calculating the sketched losses $f_i(x^k)$ is unnecessary and only the sketched residual $\mR_{i_k}^k$ corresponding to the selected index $i_k$ need be computed.

Depending on the sketching matrices $\mS_i$ and the matrix $\mB$, it is possible to update the iterate $x^k$ and compute the sketched losses $f_i(x^k)$ more efficiently if one maintains the set of sketched residuals
$\{\mR_i^k : i = 1, 2, \ldots, q\}$ in memory.
 Using the sketched residuals, the calculations above can be rewritten as 
\begin{equation}
    f_i(x^{k}) = \norm{\mR_i^k}^2_2 \label{eqn:sketched-loss-efficient}
\end{equation}
and 
\begin{equation}
    x^{k+1} = x^k - \mB^{-1}\mA^\top \mS_{i_k}\mC_{i_k} \mR_{i_k}^k \label{eqn:iterate-efficient}.
\end{equation}

The sketched residuals $\{\mR_i^k : i = 1, 2, \ldots, q\}$ can either be computed via an auxiliary update applied to the set of previous set of sketched residuals $\{\mR_i^{k-1} : i = 1, 2, \ldots, q\}$ or directly using the iterate $x^k$. Using the auxiliary update,
\begin{align}
    \mR_i^{k+1} &= \mC_i^\top\mS_i^\top( \mA x^{k+1} - b) \nonumber \\
    &= \mC_i^\top\mS_i^\top\Big( \mA (x^k - \mB^{-1}\mA^\top \mS_{i_k}\mC_{i_k} \mR_{i_k}^k) - b\Big) \nonumber \\
    &= \mR_i^{k} - \mC_i^\top\mS_i^\top \mA \mB^{-1}\mA^\top \mS_{i_k}\mC_{i_k} \mR_{i_k}^k\label{eqn:residual-efficient}
\end{align}
with the initialization
\[
\mR_i^0 = \mC_i^\top\left(\mS_i^\top( \mA x^0 - b)\right).
\]
If the matrix $\mC_i^\top\mS_i^\top \mA \mB^{-1}\mA^\top \mS_{j}\mC_{j} \in \R^{\tau \times \tau}$ is precomputed for each $i, j = 1, 2, \ldots, q$, the sketched residual $\mR_i^k$ can be updated to $\mR_i^{k+1}$ for $2 \tau^2 $ flops for each index $i$ via \Cref{eqn:residual-efficient}. Using the precomputed matrices requires storing $\tfrac{1}{4}\tau(\tau + 1)q(q+1)$ floats. 
 
In the non-adaptive case, one only needs to compute the single sketched residual $\mR_{i_k}^k$ as opposed to the entire set of sketched residuals, since the sketched losses $f_i(x^k)$ are not needed.
If the matrices
\begin{equation*}
\mC_{i}^\top \mS_{i}^\top \mA \in \R^{\tau \times n} \quad \text{and}\quad \quad \mC_{i}^\top \mS_{i}^\top b \in \R^{\tau},
\end{equation*}
are precomputed for $i=1,2,\ldots,q$, computing each sketched residual $\mR_{i}^k$ directly from the iterate $x^k$ costs $2 \tau n$ flops via \Cref{def:sketched-res}. If $q \tau > n$, then it is cheaper to compute the sketched residual $\mR_{i_k}^k$ using the auxiliary update~\Cref{eqn:residual-efficient} rather than computing it directly from $x^k$.

From the sketched residual $\mR_i^k$, the sketched losses $f_i(x^k)$ can be computed for $2 \tau - 1$ flops for each index $i$ via \Cref{eqn:sketched-loss-efficient}. If the matrix $\mB^{-1}\mA^\top \mS_{i}\mC_{i} \in \R^{n \times \tau}$ is precomputed for each $i = 1, 2, \ldots, q$, the iterate $x^k$ can then be updated to $x^{k+1}$ for $2 \tau n $ flops via \Cref{eqn:iterate-efficient}. These costs are summarized in \Cref{tab:general-shared-costs}.

\begin{algorithm}[!t]
\begin{algorithmic}[1]
\State \textbf{input:} $\mA \in \R^{m\times n}$, $b\in \R^{m}$, $\{\mS_i \in \R^{m \times \tau } : i = 1, 2, \ldots, q\}$, $\mB \in \mR^{n \times n}$, $x^0\in \range{\mB^{-1}\mA^\top},$ 
\State compute $\mC_i = \operatorname{Cholesky}\Big((\mS_{i}^\top \mA \mB^{-1} \mA^\top \mS_{i})^\dagger\Big)$ for $i = 1, 2, \ldots, q$
\Statex \Comment{The $\mC_i$ can be discarded after Line~\ref{ln:efficient-end-init}.}
\State compute $\mB^{-1}\mA^\top \mS_{i}\mC_{i} \in \R^{n \times \tau}$ for $i = 1, 2, \ldots, q$
\State compute $\mC_i^\top\mS_i^\top \mA \mB^{-1}\mA^\top \mS_{j}\mC_{j} \in \R^{\tau \times \tau}$ for $i,j = 1, 2, \ldots, q$
\State initialize $\mR_i^0 = \mC_i^\top\left(\mS_i^\top( \mA x^0 - b)\right) \in \R^{\tau} $ for $i = 1, 2, \ldots, q$ \label{ln:efficient-end-init} 
\For {$k = 0, 1, 2, \dots$} 
	\State compute $f_i(x^k) = \norm{\mR_i^{k}}_2^2$ for $i = 1, 2, \ldots, q$ \label{ln:efficient-sketched-loss} 
	\State sample $i_k \sim p_i^k$, where $p^k\in \Delta_q$ is a function of $f(x^k)$ \label{ln:efficient-sampling} 
	\State update $x^{k+1} = x^k - (\mB^{-1}\mA^\top \mS_{i_k}\mC_{i_k})  \mR_{i_k}^k$ \label{ln:efficient-iterate-update} 
    \State update $\mR_i^{k+1} =\mR_i^{k} - (\mC_i^\top\mS_i^\top \mA \mB^{-1}\mA^\top \mS_{i_k}\mC_{i_k}) \mR^{k}_{i_k}$ for $i = 1, 2, \ldots, q$ \label{ln:efficient-auxiliary-update} 
\EndFor
\State \textbf{output:} last iterate $x^{k+1}$
\end{algorithmic}
\caption{Efficient Adaptive Sampling Sketch-and-Project}
\label{alg:adasketch}
\end{algorithm}

\begin{table}[h]
\centering
    \begin{subtable}[t]{.38\linewidth}
        \centering
        \begin{tabular}{|c|c|}
            \hline
            \textbf{\makecell{Per iteration\\computation}} & \textbf{Flops} \\ \hline \hline
            \makecell{$f_i(x^k) \; \forall i$ via \\ \Cref{eqn:sketched-loss-efficient}} & $ (2 \tau - 1) q $ \\ \hline 
            \makecell{$x^{k+1}$ via \\ \Cref{eqn:iterate-efficient}} & $2 \tau n$ \\ \hline 
            \makecell{$\mR_i^k \ \forall i$ with\\auxiliary update,\\
            \Cref{eqn:residual-efficient}} & $2 \tau^2 q $ \\ \hline
            \makecell{$\mR_{i_k}^k$ via direct \\ computation,\\
            \Cref{def:sketched-res}} & $2 \tau n$\\ \hline
        \end{tabular}
        \caption{Baseline flop counts. Flop counts of $O(1)$ have been omitted.}
        \label{tab:general-shared-flop-costs}
    \end{subtable} 
    \begin{subtable}[t]{.6\linewidth}
        \centering
        \begin{tabular}{|c|c|}
            \hline
            \textbf{Stored Object} & \textbf{Storage} \\ \hline \hline
            $x^k$  & $n$ \\ \hline
            $\mR_i^k \quad \forall i$  & $\tau q $ \\ \hline
            $\mB^{-1}\mA^\top \mS_{i}\mC_{i}\quad \forall i$  & $ \tau q n$ \\ \hline
            \makecell{$\mC_i^\top\mS_i^\top \mA \mB^{-1}\mA^\top \mS_{j}\mC_{j}$ \\ $\forall i,j$} & $\tfrac{1}{4}\tau(\tau + 1)q(q+1)$ \\ \hline
            \makecell{$\mC_{i}^\top \mS_{i}^\top \mA \ $  and  \\  $\mC_{i}^\top \mS_{i}^\top b \quad \forall i$} & $\tau q (n + 1)$\\ \hline
        \end{tabular}
        \caption{Storage costs.}
    \end{subtable}%
    \caption{Summary of the costs of the of \Cref{alg:adasketch} excluding costs that are specific to the sampling method. The number of sketches is $q$, the sketch size is $\tau$ and the number of columns in the matrix $\mA$ is $n$.}
    \label{tab:general-shared-costs}
\end{table}

\subsection{Cost of sampling indices}

The cost of computing the sampling probabilities $p^k$ from the sketched losses $f_i(x^k)$ depends on the sampling strategy used. Sampling from a fixed distribution can be achieved with an $O(1)$ cost using precomputations of $O(q)$ \cite{Walker74}. 
Adaptive strategies sample from a new, unseen distribution at each iteration, which can be achieved with an average of $q$ flops using, for example, inversion by sequential search \cite[p.~86]{kemp1981efficient,devroye1986non-uniform}.  
In practice, the probabilities $p_i^k$ corresponding to each index $i$ are given by a function of the sketched losses $f(x_i^k)$ and normalizing these values is unnecessary. Instead, one can sum the $q$ sketched losses and apply inversion by sequential search with a random value $r$ generated between zero and the sum of these values. This summation requires $q-1$ flops. 
Thus the total cost for sampling from an adaptive probability distribution for the methods considered to approximately $2q$ flops on average. The costs for the sampling strategies discussed in \Cref{sec:methods} are summarized in \Cref{tab:general-rule-costs}. The calculations of these costs are discussed in more detail in \Cref{sec:sample_spec_costs}.

\begin{table}[h]
\centering
    \begin{tabular}{|c|c|c|}
        \hline
        \textbf{Sampling Strategy} & \textbf{Non-Sampling Flops} & \textbf{\makecell{Flops from Sampling}}\\ \hline \hline
        Fixed, $p_i^k \equiv p_i \ \forall k$ & $2 \tau \min(n, \tau q) + 2 \tau n$ & $O(1)$\\ \hline 
        Max-distance & \multirow{3}{*}{$(2 \tau^2 + 2 \tau - 1) q + 2 \tau n$} & \makecell{$q$ if $\tau > 1$ \\ $O(\log(q))$ if $\tau = 1$}\\ \cline{1-1}\cline{3-3}
        $p_i^k \propto f_i(x^k)$ &  & $2q$\\ \cline{1-1}\cline{3-3}
        Capped &  & $6q$\\  \hline
    \end{tabular}
    \caption{Rule-specific per-iteration costs of \Cref{alg:adasketch}. Only leading order flop counts are reported. The non-sampling flops are those that are independent of the specific adaptive sampling method used and are those that correspond to the steps indicated in \Cref{tab:general-shared-flop-costs}. The extra flops for sampling are those that are required to calculate the adaptive sampling probabilities $p^k$ at each iteration. The number of sketches is $q$, the sketch size is $\tau$ and the number of columns in the matrix $\mA$ is $n$.}
    \label{tab:general-rule-costs}
\end{table}

\begin{table}[]
    \centering
    \begin{tabular}{|c|c|c|c|c|}
    \hline
        \textbf{\makecell{Sampling\\Strategy}} 
& \textbf{\makecell{Flops Per \\ Iteration \\ When $\tau > 1$}} & \textbf{\makecell{Flops Per \\ Iteration \\ When $\tau = 1$}}
        \\        \hline \hline
        Fixed, $p_i^k \equiv p_i$ 
& $2 \tau \min(n, \tau q) + 2 \tau n$ & $2 \min(n, q) + 2 n$\\ \hline
        Max-distance 
& $(2 \tau^2 + 2 \tau)q + 2 \tau n$ & $3 q + 2 n$\\ \hline
        $p_i^k\propto f_i(x^k)$ 
& $(2 \tau^2 + 2 \tau+ 1)q + 2 \tau n$ & $5q + 2 n$\\ \hline
        Capped 
& $(2 \tau^2 + 2 \tau+ 5)q + 2 \tau n$ & $9q + 2 n$\\ \hline
    \end{tabular}
    \caption{Summary of convergence guarantees of \Cref{sec:convergence}, where $\gamma= 1/ \underset{i=1,\ldots, m}{\max}\sum_{j=1, j\ne i}^m p_i$ as defined in \Cref{eqn:gamma1} and $\epsilon = \theta(\gamma-1) \le \theta \tfrac{1}{m}$. Flop counts of $O(\log(q))$ have been omitted. Flop counts assume all matrices are dense. The number of sketches is $q$, the sketch size is $\tau$ and the number of columns in the matrix $\mA$ is $n$.}
    \label{tab:flop_summary}
\end{table}

\section{Sampling strategy specific costs}\label{sec:sample_spec_costs}

We detail the calculations that lead to the costs associated with each of the specific sampling strategies that are reported in \Cref{tab:general-rule-costs}.

\subsubsection{Sampling from a fixed distribution}
When sampling the indices $i$ from a fixed distribution, computing the sketched losses $f_i(x^k)$ is unnecessary and only the sketched residual $\mR_{i_k}^k$ of the selected index $i_k$ is needed to update the iterate $x^k$. 
If $q \tau > n$, where $q$ is the number of sketches, $\tau$ is the sketch size and $n$ is the number of columns in the matrix $\mA$, it is cheaper to compute the sketched residual $\mR_{i_k}^k$ using the auxiliary update~\Cref{eqn:residual-efficient} rather than computing it directly from $x^k$. 
Ignoring the $O(1)$ cost of sampling from the fixed distribution, the iterate update takes either $4 \tau n$ flops if $q \tau > n$ and one maintains the set of sketched residuals via the auxiliary update \Cref{eqn:residual-efficient} or $2 \tau (n+q)$ flops if the sketched residual $\mR_{i_k}^k$ is calculated from the iterate $x^k$ directly via \Cref{def:sketched-res}.

\subsubsection{Max-distance selection}
Performing max-distance selection requires finding the maximum element of the length $q$ vector of sketched losses given in \Cref{eqn:sketched-loss-efficient}. In the average case, this costs $q + O(\log q)$ flops, where $q$ flops are used to check each element and $O(\log q)$ flops arise from updates to the running maximal value. For convenience, we ignore the $O(\log q)$ flops and consider the cost of the selection step using the max-distance rule to be $q$ flops. 
If the sketches $\mS_i$ are vectors, or equivalently we have $\tau=1$, then the sketched residuals $\mR_i^k$ are scalars and finding the maximal sketched loss $f_i(x^k)$ is equivalent to finding the sketched residual $\mR_i^k$ of maximal magnitude. We can thus save $q$ flops per iteration by skipping the step of computing the sketched losses and instead taking the sketched residual of maximal magnitude.

\subsubsection{Sampling proportional to the sketched loss}\label{sec:propto-special-cost}

Sampling indices with probabilities proportional to the sketched losses $f_i(x^k)$ requires approximately $2q$ flops on average using inversion by sequential search. 

\subsubsection{Capped adaptive sampling}

Recall that using capped adaptive sampling requires identifying the set
\begin{equation*}
    \cW_k = \left\{i \; | \; f_i(x^k) \geq \theta \max_{j=1,\ldots, q} f_j(x^k) + (1-\theta)\EE{j\sim p}{f_j(x^k)}\right\}.
\end{equation*}
Sampling with the capped adaptive sampling strategy requires $q + O(\log q)$ flops to identify the maximal sketched loss $f_i(x^k)$,  $2q$ flops to computed the weighted average of the sketched losses $\EE{j\sim p}{f_j(x^k)}$, $O(1)$ flops to calculate the threshold for the set $\cW_k$, $q$ flops to apply the threshold to the sketched losses to determine the set $\cW_k$, and on average $2q$ flops to sample from the sketched losses contained in the set $\cW_k$ using inversion by sequential search. Thus, the total cost of the sampling step is $6q + O(\log q)$ flops. When a uniform average is used in place of the weighted average, the expected sketched loss $\EE{j\sim p}{f_j(x^k)}$ can be computed in just $q$ flops as opposed to $2q$. In that case, the total cost of the sampling step is only $5q + O(\log q)$.

\section{Auxiliary lemma}\label{sec:aux_lemmas}
We now invoke a lemma taken from~\cite{gower2016pseudo}.
\begin{lemma}\label{lem:NullA}For any matrix $\mW$ and symmetric positive semidefinite matrix $\mG$ such that
\begin{equation} \label{eq:Gnullassm}
 \kernel{\mG} \subset \kernel{\mW^\top },
 \end{equation} we have that
\begin{equation}\label{eq:8ys98hs}\kernel{\mW} = \kernel{\mW^\top \mG \mW}.
\end{equation}
\end{lemma}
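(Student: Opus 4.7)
The plan is to prove the two set inclusions separately. The inclusion $\kernel{\mW} \subset \kernel{\mW^\top \mG \mW}$ is immediate: if $\mW x = 0$ then clearly $\mW^\top \mG \mW x = 0$, and this direction does not even require the hypothesis~\eqref{eq:Gnullassm} or the positive semidefiniteness of $\mG$.

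The bulk of the argument is the reverse inclusion $\kernel{\mW^\top \mG \mW} \subset \kernel{\mW}$. Suppose $x$ satisfies $\mW^\top \mG \mW x = 0$. Taking inner product with $x$ and using a symmetric PSD square root $\mG^{1/2}$ (which exists since $\mG$ is symmetric PSD), I obtain
\begin{equation*}
0 \;=\; \dotprod{x, \mW^\top \mG \mW x} \;=\; \norm{\mG^{1/2}\mW x}_2^2,
\end{equation*}
which forces $\mG^{1/2}\mW x = 0$, and therefore $\mG \mW x = \mG^{1/2}(\mG^{1/2}\mW x) = 0$. Thus $\mW x \in \kernel{\mG}$.

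Now I invoke the hypothesis~\eqref{eq:Gnullassm}: since $\mW x \in \kernel{\mG} \subset \kernel{\mW^\top}$, we get $\mW^\top \mW x = 0$. Taking inner product with $x$ one more time,
\begin{equation*}
0 \;=\; \dotprod{x, \mW^\top \mW x} \;=\; \norm{\mW x}_2^2,
\end{equation*}
hence $\mW x = 0$, i.e.\ $x \in \kernel{\mW}$. Combining the two inclusions yields~\eqref{eq:8ys98hs}.

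There is no real obstacle here; the only subtle point is recognizing that one must use the PSD structure twice (once to deduce $\mG \mW x = 0$ via the square root, and once more to turn $\mW^\top \mW x = 0$ into $\mW x = 0$), with the hypothesis $\kernel{\mG}\subset \kernel{\mW^\top}$ serving as the bridge between the two steps.
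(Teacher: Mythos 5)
Your proof is correct and follows essentially the same route as the paper: take the quadratic form to get $\mG^{1/2}\mW x=0$, deduce $\mW x\in\kernel{\mG}\subset\kernel{\mW^\top}$, and conclude $\mW x=0$. The only cosmetic difference is the last step, where you compute $\dotprod{x,\mW^\top\mW x}=\norm{\mW x}_2^2=0$ while the paper observes that $\mW x\in\kernel{\mW^\top}\cap\range{\mW}=\{0\}$; these are the same fact.
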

\begin{proof} 
In order to establish \Cref{eq:8ys98hs}, it suffices to show the inclusion $\kernel{\mW} \supseteq \kernel{\mW^\top \mG \mW}$ since the reverse inclusion trivially holds. Letting $s\in \kernel{\mW^\top \mG \mW}$, we see that $\|\mG^{1/2}\mW s\|^2=0$, which implies $\mG^{1/2}\mW s=0$.
Consequently 
\[\mW s \in \kernel{\mG^{1/2}} = \kernel{\mG} \overset{\eqref{eq:Gnullassm} }{\subset}  \kernel{\mW^\top}.\] Thus $\mW s \in \kernel{\mW^\top} \cap \range{\mW}$ which are orthogonal complements which shows that $\mW s = 0.$
\end{proof}

\section*{Acknowledgements}
Needell, Molitor and Moorman are grateful to and were partially supported by NSF CAREER DMS $\#1348721$ and NSF BIGDATA DMS $\#1740325$. Moorman was also funded by NSF grant DGE $\#1829071$. Gower acknowledges the support by grants from
DIM Math Innov R\'egion Ile-de-France (ED574 - FMJH), reference ANR-11-LABX-0056-LMH, LabEx LMH.

\bibliographystyle{plain}

\end{document}